\newtheorem*{theorem*}{Theorem}
\newtheorem*{corollary*}{Corollary}
\newtheorem{lemma}{Lemma}[subsection]
\newtheorem{corollary}[lemma]{Corollary}
\newtheorem{proposition}[lemma]{Proposition}
\newtheorem{theorem}[lemma]{Theorem}
\theoremstyle{definition}
\newtheorem{definition}[lemma]{Definition}
\theoremstyle{remark}
\newtheorem{notation}[lemma]{Notation}
\newtheorem{remark}[lemma]{Remark}
\newtheorem{example}[lemma]{Example}
\newcommand{\cc}{\mathbb{C}}
\newcommand{\Z}{{\mathbb Z}}
\newcommand{\R}{{\mathbb R}}
\newcommand{\C}{{\mathbb C}}
\newcommand{\proofend}{\hfill$\Box$\smallskip}
\newcommand{\pr}{{\operatorname{pr}}}
\newcommand{\Sc}{{\mathcal S}}
\newcommand{\G}{{\mathcal G}}
\newcommand{\Fre}{{Fr\'{e}chet \,}}
\newcommand{\tG}{{\widetilde{G}}}
\newcommand{\tH}{{\widetilde{H}}}
\newcommand{\Fou}{{\mathcal{F}}}
\newcommand{\cC}{{C_c^{\infty}}}
\newcommand{\cD}{{\mathcal{D}}}
\newcommand{\g}{{\mathfrak{g}}}
\newcommand{\Supp}{\mathrm{Supp}}
\newcommand\classification[2][]{%
  \gdef\@classification{%
    \href{http://www.ams.org/msc/}%
{\textit{2000 Mathematics Subject Classification}}
\ignorespaces#2\unskip}}
\def\@classification{\CPM@warn{No MSC classification}\textit{2000
Mathematics Subject Classification} ????}
\def\received#1{%
  \gdef\@received{Received #1}}
\def\@received{\CPM@warn{No received date}Received 0000}
\begin{document}

\author{Avraham Aizenbud}
\address{Avraham Aizenbud and Dmitry Gourevitch, Faculty of Mathematics and Computer
Science, The Weizmann Institute of Science POB 26, Rehovot 76100,
Israel} \email{aizenr@yahoo.com}

\author{Dmitry Gourevitch} \email{guredim@yahoo.com}

\author{Eitan Sayag}
\address{Eitan Sayag, Institute of Mathematics, The Hebrew University of Jerusalem,
Jerusalem, 91904, Israel} \email{eitan.sayag@gmail.com}

\keywords{Multiplicity one, invariant distribution.\\ \indent MSC
Classes: 22E, 22E45, 20G05, 20G25, 46F99.}

\title[$(\mathrm{GL}_{n+1}(F),\mathrm{GL}_{n}(F))$ is a Gelfand pair]
{$(\mathrm{GL}_{n+1}(F),\mathrm{GL}_{n}(F))$ is a Gelfand pair\\
for any local field $F$}
\begin{abstract}
Let $F$ be an arbitrary local field. Consider the standard embedding
$\mathrm{GL}_n(F) \hookrightarrow \mathrm{GL}_{n+1}(F)$ and the
two-sided action of $\mathrm{GL}_n(F) \times \mathrm{GL}_n(F)$ on
$\mathrm{GL}_{n+1}(F)$.

In this paper we show that any $\mathrm{GL}_n(F) \times
\mathrm{GL}_n(F)$-invariant distribution on $\mathrm{GL}_{n+1}(F)$
is invariant with respect to transposition.

We show that this implies that the pair $(GL_{n+1}(F),GL_{n}(F))$ is
a Gelfand pair. Namely, for any irreducible admissible
representation $(\pi,E)$ of $\mathrm{GL}_{n+1}(F)$,
$$\dim Hom_{\mathrm{GL}_n(F)}(E,\cc) \leq 1.$$

For the proof in the archimedean case we develop several tools to
study invariant distributions on smooth manifolds.
\end{abstract}
\maketitle \tableofcontents
\section{Introduction}
Let $F$ be an arbitrary local field. Consider the standard
imbedding $\mathrm{GL}_n(F) \hookrightarrow \mathrm{GL}_{n+1}(F)$.
We consider the two-sided action of $\mathrm{GL}_n(F) \times
\mathrm{GL}_n(F)$ on $\mathrm{GL}_{n+1}(F)$ defined by
$(g_1,g_2)h:=g_1 hg_2^{-1}$. In this paper we prove the following
theorem:
\begin{theorem*} [A]
Any $\mathrm{GL}_n(F) \times \mathrm{GL}_n(F)$ invariant
distribution on $\mathrm{GL}_{n+1}(F)$ is invariant with respect
to transposition.
\end{theorem*}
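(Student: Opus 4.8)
I would prove Theorem~A by the pattern that has become standard for Gelfand pairs of Gan--Gross--Prasad type: first \emph{linearize}, replacing the assertion about invariant distributions on the group by one about invariant distributions on a linear $\mathrm{GL}_n(F)$-module, and then run \emph{Harish--Chandra descent} on that module, the whole difficulty being concentrated in the ``nilpotent'' base stratum of the descent (and, in the archimedean case, in the analytic machinery needed to run the descent at all). Set $H:=\mathrm{GL}_n(F)$ and write $g=\begin{pmatrix}A&b\\ c&d\end{pmatrix}\in\mathrm{GL}_{n+1}(F)$ with $A\in M_n(F)$, $b\in F^n$, $c\in (F^n)^*$, $d\in F$; then $(h_1,h_2)\in H\times H$ acts by $(A,b,c,d)\mapsto(h_1Ah_2^{-1},\,h_1b,\,ch_2^{-1},\,d)$, the coordinate $d$ is invariant, and transposition is $\tau\colon(A,b,c,d)\mapsto(A^t,c^t,b^t,d)$, which normalizes the $H\times H$-action (it conjugates $(h_1,h_2)$ to $(h_2^{-t},h_1^{-t})$). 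Thus $\widetilde H:=(H\times H)\rtimes\langle\tau\rangle$ acts and the claim is that every $(H\times H)$-invariant distribution is automatically $\widetilde H$-invariant.

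\emph{Linearization.} As $d$ is invariant and $\tau$-fixed, Bernstein's localization principle --- whose smooth/archimedean version one must first set up --- reduces matters to a single fibre $\{d=\mathrm{const}\}$, after which $d$ disappears and the picture lives on an open $(H\times H)$-invariant subset of the vector space $M_n(F)\oplus F^n\oplus(F^n)^*$. Viewing $\mathrm{GL}_{n+1}(F)$ with its two-sided action as (a piece of) the symmetric space $(\mathrm{GL}_{n+1}\times\mathrm{GL}_{n+1})/\Delta\mathrm{GL}_{n+1}$ and applying a Harish--Chandra-type passage to the tangent space, restricted to $H$, together with a partial Fourier transform in the vector variables and Frobenius descent along the $A$-fibration, I would reduce to the following \emph{linearized statement}: every $\mathrm{GL}_n(F)$-invariant distribution on $\mathfrak{gl}_n(F)\oplus F^n\oplus(F^n)^*$, with $\mathrm{GL}_n(F)$ acting by the adjoint action on $\mathfrak{gl}_n(F)$ and by the standard and costandard representations on $F^n$ and $(F^n)^*$, is invariant under $(X,v,\varphi)\mapsto(X^t,\varphi^t,v^t)$. (This module is just $\mathfrak{gl}_{n+1}$ as an $H$-module, minus its one-dimensional trivial summand; the Fourier transform intertwines the transpose involution with itself up to the obvious identifications.)

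\emph{Descent and the regular locus.} On the linearized model I would stratify by the conjugacy class of the semisimple part $X_s$ of $X$. When $X_s\ne0$ the centralizer $Z_{\mathrm{GL}_n}(X_s)$ is a proper Levi, a product of smaller general linear groups over finite extensions of $F$, so the archimedean Frobenius descent reduces such a stratum to the same statement for strictly smaller groups, handled by induction on $n$. On the open ``regular'' part of the whole space --- where $v$ is cyclic for $X$, or dually $\varphi$ is --- the $\mathrm{GL}_n$-orbits are exactly the common level sets of the invariants $\det(t-X)$ and the moments $\varphi X^k v$, all of which are $\tau$-invariant, the stabilizers are trivial, and $\tau$ is induced by a group element; hence invariant distributions there are automatically $\tau$-invariant. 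The remaining locus, where $v$ and $\varphi$ fail to be cyclic, is --- after the descent has reduced $X$ to be nilpotent --- a finite union of $\mathrm{GL}_n$-orbits built from nilpotent matrices and ``small'' vectors, and this is where all the content lies.

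\emph{The nilpotent stratum --- the main obstacle.} It suffices to show that a $\mathrm{GL}_n$-invariant distribution which is moreover $\tau$-\emph{anti}invariant and supported on the non-regular set must vanish. The mechanism is homogeneity: the non-regular set is conical for the $\mathbb{G}_m$-action attached to a Jacobson--Morozov $\mathfrak{sl}_2$-triple through $X$, an invariant, $\tau$-antiinvariant distribution supported there decomposes --- together with its transversal derivatives --- into homogeneous pieces of prescribed degrees, and comparing these with the degrees allowed by the Fourier transform (which sends such a distribution to one of the same type) forces vanishing. In the $p$-adic case this is the familiar homogeneity-plus-uncertainty-principle argument; over $\mathbb{R}$ or $\mathbb{C}$ one needs instead: an archimedean Bernstein localization principle, an archimedean Frobenius reciprocity for distributions on fibre bundles, a description of equivariant distributions supported on a submanifold in terms of the Weyl-algebra (metaplectic $\mathfrak{sl}_2$) action on the conormal directions, and the corresponding vanishing theorem for homogeneous equivariant distributions on a vector space. \emph{I expect this step --- carrying the nilpotent-cone vanishing through the archimedean analysis --- to be the principal obstacle}; by contrast, given those archimedean descent tools, the linearization and the semisimple descent are essentially formal.
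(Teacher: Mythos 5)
Your proposal is correct in outline but takes a genuinely different route from the paper: it is essentially the Harish--Chandra-descent strategy familiar from Rallis--Schiffmann and AGRS, in which one stratifies the linearized module by the semisimple conjugacy class of the $\mathfrak{gl}_n$-component, descends along centralizers (Levis) by induction, disposes of the regular locus by noting the invariants are $\tau$-stable, and concentrates all the difficulty in a homogeneity/uncertainty-principle argument on the nilpotent cone, using Jacobson--Morozov $\mathfrak{sl}_2$-gradings and the Weyl-algebra action on conormal directions. The paper does something much more elementary. After the same linearization to $\mathfrak{gl}_n(F)\oplus F^n\oplus(F^n)^*$, it stratifies not by semisimple part but by the \emph{rank} of the matrix component $A$; for each rank-$i$ stratum it applies Frobenius reciprocity to the projection onto the (transitive) rank-$i$ variety, reducing to distributions on $W=F^n\oplus(F^n)^*$ valued in symmetric powers of a conormal bundle, equivariant under the stabilizer of the rank-$i$ idempotent; a product decomposition $W\cong W_i\times W_{n-i}$ then reduces everything to a single \emph{key lemma}, namely that $W_n$ carries no $\widetilde H_n$-equivariant distribution with sign character. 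That key lemma is reduced, via the diagonal torus and a product trick, to $n=1$: the space is $F^2$ with $F^{\times}$ acting by $(x,y)\mapsto(\lambda x,\lambda^{-1}y)$ and the swap involution, the support is forced onto the cross $\{xy=0\}$, and a short homogeneity-plus-Fourier-transform computation (supplemented in the archimedean case by the transversal-derivative trick, Proposition~\ref{Trick}) finishes it. What your route buys: it is the template that, with substantially more work on nilpotent orbits, yields the \emph{strong} Gelfand pair result of AGRS. What the paper's route buys: it avoids Harish--Chandra descent, $\mathfrak{sl}_2$-triples, and the nilpotent-orbit combinatorics altogether, so that the archimedean machinery one must actually develop (stratification with symmetric powers of conormal bundles, Frobenius reciprocity, the homogeneity criterion, and the Fourier-transform lemma for Schwartz distributions on a Nash cone) is correspondingly lighter and is run on a two-dimensional model rather than on the full nilpotent cone. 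In particular, the step you rightly flag as the principal obstacle is precisely what the paper's choice of stratification is designed to sidestep.
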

Theorem A has the following consequence in representation theory.

\begin{theorem*}[B]
Let $(\pi,E)$ be an irreducible admissible representation of
$\mathrm{GL}_{n+1}(F)$. Then
\begin{equation}\label{dim1}
\dim Hom_{\mathrm{GL}_n(F)}(E,\cc) \leq 1.
\end{equation}
\end{theorem*}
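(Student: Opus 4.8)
The plan is to invoke the classical Gelfand--Kazhdan method, which reduces the multiplicity-one statement \eqref{dim1} to the existence of an anti-involution $\sigma$ of $G := \mathrm{GL}_{n+1}(F)$ that fixes (up to conjugacy) enough of the structure, together with the statement that every bi-$H$-invariant distribution on $G$, where $H := \mathrm{GL}_n(F)$, is $\sigma$-invariant --- and the latter is precisely Theorem A, with $\sigma = $ transposition. Concretely, first I would observe that the transpose map $g \mapsto g^t$ is an anti-automorphism of $G$ preserving $H$, and that it acts trivially on the set of double cosets $H \bs G / H$ in the relevant sense; in the $p$-adic case this is the standard setup. Then, for an irreducible admissible $(\pi, E)$ with $E' := \widetilde{E}$ its contragredient, I would consider the space of $H$-invariant functionals on $E$ and on $E'$. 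The key classical lemma (Gelfand--Kazhdan) states: if every bi-$H$-invariant distribution on $G$ is $\sigma$-invariant, then
\begin{equation*}
\dim \Hom_H(E,\cc) \cdot \dim \Hom_H(E', \cc) \leq 1,
\end{equation*}
or more precisely one gets that these two Hom-spaces cannot both be nonzero of total dimension exceeding $1$; combined with the fact that $\widetilde{\pi} \cong \pi^\sigma$ (since for $\mathrm{GL}_{n+1}$ the contragredient is the transpose-inverse twist, hence isomorphic to the transpose pullback), one concludes $\dim \Hom_H(E,\cc) \leq 1$.

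The main technical point is to justify this reduction uniformly over \emph{all} local fields, including the archimedean ones, where distributions, continuity, and the passage between invariant distributions and intertwining operators require more care than in the totally disconnected case (nuclearity, the Schwartz space formalism, automatic continuity of $H$-invariant functionals on smooth admissible representations). I would therefore: (i) recall that an $H$-invariant functional on the smooth Fr\'echet representation $E$ extends to an $H$-invariant distribution on $G$ via matrix coefficients, producing an $H \times H$-equivariant embedding that lands in the space of bi-$H$-invariant distributions twisted appropriately; (ii) apply the transposition-invariance from Theorem A to see that the resulting bilinear pairing between $\Hom_H(E,\cc)$ and $\Hom_H(E',\cc)$ is symmetric; (iii) conclude by the standard argument that a space admitting a nondegenerate symmetric pairing with a one-dimensional-or-smaller partner is itself at most one-dimensional. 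The existence of the contragredient identification $\widetilde{\pi} \cong {}^t\pi$ for $\mathrm{GL}_{n+1}$ I would take as known (via the Weyl element $w_0$ or directly, since $g \mapsto (g^t)^{-1} $ is an automorphism and inversion is an anti-automorphism).

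The hard part will be the archimedean functional-analytic bookkeeping: ensuring that the Gelfand--Kazhdan ``cross'' of representations and distributions is set up with the correct topological completions, that $H$-invariant continuous functionals really do correspond to the invariant distributions Theorem A controls, and that no measure-zero or non-tempered phenomena obstruct the argument. In the $p$-adic case the argument is essentially formal once Theorem A is in hand; in the archimedean case one leans on the general machinery (developed, as the abstract indicates, in the body of the paper) for invariant distributions on smooth manifolds, and on known results of Casselman--Wallach type on smooth admissible representations. I expect the deduction itself to be short modulo these foundational inputs, with essentially all the real content residing in Theorem A.
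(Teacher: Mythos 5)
Your proposal follows essentially the same route as the paper: the Gelfand--Kazhdan criterion (bi-$H$-invariant distributions fixed by transposition $\Rightarrow$ $\dim \Hom_H(E,\cc)\cdot\dim \Hom_H(\widetilde{E},\cc)\le 1$, proved in the archimedean case via $\pi(\phi)$, Schwartz distributions, Schur's lemma and the Casselman--Wallach globalization theorem), combined with $\widetilde{\pi}\cong\pi\circ\theta$ for $\theta(g)=(g^{-1})^t$ to upgrade this product bound to $\dim \Hom_H(E,\cc)\le 1$. The only point you defer as ``known'' --- that $\widetilde{\pi}\cong\widehat{\pi}$ for general admissible archimedean representations --- the paper actually proves, by comparing characters (using that $g^{-1}$ is conjugate to $(g^{-1})^t$) together with the Harish-Chandra regularity theorem; otherwise the two arguments coincide.
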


Since any character of $\mathrm{GL}_{n}(F)$ can be extended to
$\mathrm{GL}_{n+1}(F)$, we obtain

\begin{corollary*}
Let $(\pi,E)$ be an irreducible admissible representation of
$\mathrm{GL}_{n+1}(F)$ and let $\chi$ be a character of
$\mathrm{GL}_{n}(F)$.  Then
$$\dim Hom_{\mathrm{GL}_n(F)}(\pi,\chi) \leq 1.$$
\end{corollary*}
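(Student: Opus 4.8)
The plan is to deduce the Corollary from Theorem B by a routine twisting argument, the only external input being the elementary fact already noted above: every character of $\mathrm{GL}_n(F)$ extends to a character of $\mathrm{GL}_{n+1}(F)$.

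First I would record why such an extension exists: the commutator subgroup of $\mathrm{GL}_n(F)$ is $\mathrm{SL}_n(F)$, so every character $\chi$ of $\mathrm{GL}_n(F)$ (continuous, in the archimedean case) factors through the determinant, $\chi = \psi \circ \det$ for a character $\psi$ of $F^\times$. Setting $\tilde{\chi} := \psi \circ \det$ on $\mathrm{GL}_{n+1}(F)$ produces a character of $\mathrm{GL}_{n+1}(F)$ whose restriction to $\mathrm{GL}_n(F)$ is $\chi$.

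Next I would form the twisted representation $\pi' := \pi \otimes \tilde{\chi}^{-1}$ on the same space $E$. Twisting by a character preserves irreducibility and admissibility (and, in the archimedean case, keeps us inside the relevant category of admissible representations), so $(\pi',E)$ is again irreducible admissible. Since $\tilde{\chi}|_{\mathrm{GL}_n(F)} = \chi$, tensoring an intertwining map with $\tilde{\chi}^{-1}$ gives a canonical isomorphism
$$\Hom_{\mathrm{GL}_n(F)}(\pi,\chi) \;\cong\; \Hom_{\mathrm{GL}_n(F)}\bigl(\pi\otimes\tilde{\chi}^{-1},\, \chi\otimes\tilde{\chi}^{-1}|_{\mathrm{GL}_n(F)}\bigr) \;=\; \Hom_{\mathrm{GL}_n(F)}(\pi',\cc).$$
Applying Theorem B to $\pi'$ gives $\dim \Hom_{\mathrm{GL}_n(F)}(\pi',\cc) \leq 1$, which is the assertion.

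There is essentially no obstacle here: the substance of the paper is entirely in Theorems A and B, and the Corollary is a formal consequence. The only two points needing a word of care are the extendability of $\chi$ (which is just $[\mathrm{GL}_n(F),\mathrm{GL}_n(F)] = \mathrm{SL}_n(F)$, so that $\mathrm{GL}_n(F)^{\mathrm{ab}} \cong F^\times$ via $\det$) and the harmless remark that a character twist does not leave the class of irreducible admissible representations.
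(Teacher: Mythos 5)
Your argument is exactly the one the paper has in mind: the paper states the corollary immediately after noting that any character of $\mathrm{GL}_n(F)$ extends to $\mathrm{GL}_{n+1}(F)$, leaving the twisting step implicit, and you have simply spelled out that step (together with the elementary justification, via $[\mathrm{GL}_n(F),\mathrm{GL}_n(F)]=\mathrm{SL}_n(F)$, that such an extension exists). The proposal is correct and matches the paper's approach.
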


In the non-archimedean case we use the standard notion of
admissible representation (see \cite{BZ}). In the archimedean case
we consider admissible smooth \Fre representations (see section
\ref{repth}).

Theorem B has some application to the theory of automorphic forms,
more specifically to the factorizability of certain periods of
automorphic forms on $GL_{n}$ (see \cite{Flicker} and
\cite{Flicker-Nikolov}).

We deduce Theorem B from Theorem A using an argument due to
Gelfand and Kazhdan adapted to the archimedean case. In our
approach we use two deep results: the globalization theorem of
Casselman-Wallach (see \cite{WallachB2}), and the regularity
theorem of Harish-Chandra (\cite{WallachB1}, chapter 8).

Clearly, Theorem B implies in particular that (\ref{dim1}) holds
for unitary irreducible representations of $\mathrm{GL}_{n+1}(F)$.
That is, the pair $(\mathrm{GL}_{n+1}(F),\mathrm{GL}_{n}(F))$ is a
generalized Gelfand pair in the sense of \cite{vD} and
\cite{Bos-vD}.

The notion of Gelfand pair was studied extensively in the
literature both in the setting of real groups and $p$-adic groups
(e.g. \cite{Gelfand-Kazhdan}, \cite{vD}, \cite{vD-P},
\cite{Bos-vD}, \cite{Gross}, \cite{Prasad} and \cite{JR} to
mention a few). In \cite{vD}, the notion of generalized Gelfand
pair is defined by requiring a condition of the form (\ref{dim1})
for irreducible unitary representations. The definition suggested
in \cite{Gross} refers to the non-archimedean case and to a
property satisfied by all irreducible admissible representations. 
In both cases, the verification of the said condition is achieved
by means of a theorem on invariant distributions. However, the
required statement on invariant distributions needed to verify
condition (\ref{dim1}) for unitary representation concerns only
positive definite distributions. We elaborate on these issues in
section \ref{repth}.
\subsection{Related results}
$ $

Several existing papers study related problems.

The case of non-archimedean fields of zero characteristic is
covered in \cite{AGRS} (see also \cite{AG2}) where it is proven
that the pair $(GL_{n+1}(F),GL_{n}(F))$ is a strong Gelfand pair
i.e. $\dim_{H}(\pi,\sigma) \leq 1$ for any irreducible admissible
representation $\pi$ of $G$ and \emph{any} irreducible admissible
representation $\sigma$ of $H$. Here $H=\mathrm{GL}_{n}(F)$ and
$G=\mathrm{GL}_{n+1}(F)$.

In \cite{JR}, it is proved that
$(\mathrm{GL}_{n+1}(F),\mathrm{GL}_{n}(F) \times
\mathrm{GL}_{1}(F))$ is a Gelfand pair, where $F$ is a local
non-archimedean field of zero characteristic.

In \cite{vD-P} it is proved that for $n \geq 2$ the pair
$(SL_{n+1}(\R),GL_{n}(\R))$ is a generalized Gelfand pair 
and a similar result is obtained in \cite{Bos-vD} for the $p$-adic
case, for $n \geq 3$. We emphasize that these results are proved
in the realm of unitary representations. Another difference
between these works and the present paper is that the embedding
$GL_{n}(F) \subset GL_{n+1}(F)$ studied here does not factor
through the embedding $GL_{n}(F) \hookrightarrow SL_{n+1}(F)$ of
\cite{vD-P}. In particular, $(GL_{2}(\R),GL_{1}(\R))$ is a
generalized Gelfand pair, and the pair $(SL_{2}(\R),GL_{1}(\R))$
is not a generalized Gelfand pair (\cite{Molchanov},\cite{vD}).




\subsection{Content of the Paper}
$ $

We now briefly sketch the structure and content of the paper.

In section \ref{repth} we prove that Theorem A implies Theorem B.
For this we clarify the relation between the theory of Gelfand
pairs and the theory of invariant distributions both in the
setting of \cite{vD} and in the setting of \cite{Gross}.

In section \ref{p-adic} we present the proof of theorem A in the
non-archimedean case. This section gives a good introduction to
the rest of the paper since it contains many of the ideas but is
technically simpler.

In section \ref{Prel} we provide several tools to study invariant
distributions on smooth manifolds.
We believe that these results are of independent interest. In
particular we introduce
an adaption of a trick due to Bernstein which is very useful in
the study of invariant distributions on vector spaces (proposition
\ref{SimpleOrbitCheck}).
These
results partly relay on \cite{AG}.

In section \ref{proofs} we
prove Theorem A in the archimedean case. This is the main result of the paper. 
The scheme of the proof is similar to the non-archimedean case.
However, it is complicated by the fact that distributions on real
manifolds do not behave as nicely as distributions on
$\ell$-spaces (see \cite{BZ}).

We now explain briefly the main difference between the study of
distributions on $\ell$-spaces and distributions on real
manifolds.

The space of distributions on an $\ell$-space $X$ supported on a
closed subset $Z \subset X$ coincides with the space of
distributions on $Z$. In the presence of group action on $X$, one
can frequently use this property to reduce the study of
distributions on $X$ to distributions on orbits, that is on
homogenous spaces. Although this property fails for distributions
on real manifolds, one can still reduce problems to orbits. In the
case of finitely many orbits this is studied in \cite{Bruhat},
\cite{CHM}, \cite{AG}. 
%

We mention that unlike the $p$-adic case, after the reduction to
the orbits one needs to analyze generalized sections of symmetric
powers of the normal bundles to the orbits, and not just
distributions on those orbits. Here we employ a trick, proposition
\ref{Trick}, which allows us to recover this information from a
study of invariant distributions on a larger space.

In section \ref{SecFrob} we provide the proof for the Frobenius
reciprocity.   The proof follows the proof in \cite{Bar} (section
3).

In section \ref{AppFilt} we prove the rest of the statements of
section \ref{Prel}.

\subsection*{Acknowledgements}
This work was conceived while the three authors were visiting at
the Hausdorff Institute of Mathematics (HIM) at Bonn while
participating in the program \emph{Representation theory, complex
analysis and integral geometry} joint with Max Planck Institute
fur Mathematik.

We wish to thank the director of the HIM, \textbf{Prof. Matthias
Kreck}, for the inspiring environment and perfect working
conditions at the HIM.

We wish to thank \textbf{Prof. Gerrit van Dijk} for useful e-mail
correspondence. We thank \textbf{Prof. Bernhard Kroetz} for useful
advice and \textbf{Dr. Oksana Yakimova} for fruitful discussions.

Finally, we thank our teacher \textbf{Prof. Joseph Bernstein} for
our mathematical education.

During the preparation of this work, Eitan Sayag was partially
supported by ISF grant number 147/05.

\section{Generalized Gelfand pairs and invariant
distributions}\label{repth}

In this section we show that Theorem A implies Theorem B. When $F$
is non-archimedean this is a well known argument of Gelfand and
Kazhdan (see \cite{Gelfand-Kazhdan, Prasad}). When $F$ is
archimedean and the representations in question are unitary such a
reduction
is due to \cite{Thomas}. We wish to consider representations which
are not necessarily unitary and present here an argument which is
valid in the generality of admissible smooth \Fre representations.
Our treatment is close in spirit to \cite{Shalika} (where
multiplicity one result of Whittaker model is obtained for unitary
representation) but at a crucial point we need to use the
globalization theorem of Casselman-Wallach.

\subsection{Smooth \Fre representations}
$ $

The theory of representations in the context of \Fre spaces is
developed in \cite{Cas2} and \cite{WallachB2}. We present here a
well-known slightly modified version of that theory.

\begin{definition}
Let $V$ be a complete locally convex topological vector space. A
representation $(\pi,V,G)$ is a continuous map $G \times V \to V$.
A representation is called \textbf{\Fre} if there exists a
countable family of semi-norms $\rho_{i}$ on $V$ defining the
topology of $V$ and such that the action of $G$ is continuous with
respect to each $\rho_{i}$. We will say that $V$ is \textbf{smooth
\Fre} representation if, for any $X \in \g$ the differentiation
map $ v \mapsto \pi(X)v$ is a continuous linear map from $V$ to
$V$.

\end{definition}

An important class of examples of smooth \Fre representations is
obtained from continuous Hilbert representations $(\pi,H)$ by
considering the subspace of smooth vectors $H^{\infty}$ as a \Fre
space (see \cite{WallachB1} section 1.6 and \cite{WallachB2} 11.5).

We will consider mostly smooth \Fre representations.

\begin{remark}
In the language of \cite{WallachB2} and \cite{Cas} the
representations above are called smooth \Fre representations of
moderate growth.
\end{remark}

Recall that a smooth \Fre representation is called
\textit{admissible} if it is finitely generated and its underlying
$(\g,K)$-module is admissible. In what follows \textit{admissible
representation} will always refer to admissible smooth \Fre
representation.

For a smooth admissible \Fre representation $(\pi,E)$ we denote by
$(\widetilde{\pi},\widetilde{E})$ the smooth contragredient of
$(\pi,E)$.
%

We will require the following corollary of the globalization
theorem of Casselman and Wallach (see \cite{WallachB2} , chapter
11).
\begin{theorem}\label{cortoCW}
Let $E$ be an admissible \Fre representation, then there exists a
continuous Hilbert space representation $(\pi,H)$ such that
$E=H^{\infty}$.
\end{theorem}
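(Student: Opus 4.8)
The plan is to deduce this directly from the Casselman--Wallach globalization theorem, together with Casselman's subrepresentation (embedding) theorem and Harish-Chandra's finiteness theorem; all the real content is already packaged in those results, so the argument is essentially one of assembly. First I would pass to the underlying Harish-Chandra module: let $E_{HC}$ be the space of $K$-finite vectors in $E$. By hypothesis $E$ is finitely generated and $E_{HC}$ is admissible, so by Harish-Chandra's theorem $E_{HC}$ is a $(\g,K)$-module of finite length.

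Next I would realize $E_{HC}$ as the Harish-Chandra module of a continuous Hilbert space representation. By Casselman's embedding theorem, the finitely generated admissible module $E_{HC}$ embeds as a $(\g,K)$-submodule into the Harish-Chandra module $I_{HC}$ of a principal series representation $I$ (parabolically induced from a minimal parabolic), and $I$ carries a natural continuous Hilbert space structure. Let $H\subset I$ be the closure of the image of $E_{HC}$; being a closed $G$-invariant subspace of a Hilbert representation, $H$ is itself a continuous Hilbert space representation. One then checks that the $K$-finite vectors of $H$ are exactly $E_{HC}$: for each $K$-type $\tau$ the isotypic piece $E_{HC}[\tau]$ is finite-dimensional, hence already closed inside $I[\tau]$, so it coincides with $H\cap I[\tau]=H[\tau]$; summing over $\tau$ yields $H_{HC}=E_{HC}$.

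Finally I would invoke the Casselman--Wallach theorem. The space of smooth vectors $H^{\infty}$ is an admissible smooth \Fre representation of moderate growth whose Harish-Chandra module is $E_{HC}$, while $E$ is (by hypothesis, and by the remark preceding the statement that \Fre representations here are automatically of moderate growth) also an admissible smooth \Fre representation of moderate growth with Harish-Chandra module $E_{HC}$. By the uniqueness part of the Casselman--Wallach globalization theorem --- equivalently, the statement that $V\mapsto V_{HC}$ is an equivalence from admissible smooth \Fre representations of moderate growth to finite length $(\g,K)$-modules --- there is a topological isomorphism $H^{\infty}\cong E$. Taking $(\pi,H)$ to be the Hilbert representation just built gives the claim.

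The main obstacle is entirely the last step: the uniqueness and automatic continuity in the Casselman--Wallach theorem is the one genuinely deep input, and we are simply quoting it. The only point requiring care in the elementary part is verifying in the second step that passing to the closure of $E_{HC}$ inside $I$ does not enlarge the space of $K$-finite vectors, which is immediate from admissibility (finite-dimensional $K$-isotypic components) as indicated.
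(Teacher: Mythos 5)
Your argument is correct and is exactly the route the paper takes: the paper's proof is the one-line remark that the theorem ``follows easily from the embedding theorem of Casselman combined with Casselman--Wallach globalization theorem,'' and your proposal simply fills in the details of that assembly (embed $E_{HC}$ in a principal series, close up to get $H$, check $H_{HC}=E_{HC}$ via finite-dimensionality of $K$-isotypic components, and conclude $E\cong H^{\infty}$ by Casselman--Wallach uniqueness). No gaps.
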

This theorem follows easily from the embedding theorem of
Casselman combined with Casselman-Wallach globalization theorem.

\Fre representations of $G$ can be lifted to representations of
$\Sc(G)$, the Schwartz space of $G$. This is a space consisting of
functions on $G$ which, together with all their derivatives, are
rapidly decreasing (see \cite{Cas}. For an equivalent definition
see section \ref{Notations}).

For a \Fre representation $(\pi,E)$ of $G$, the algebra $\Sc(G)$
acts on $E$ through
\begin{equation}\label{operator}
\pi(\phi)=\int_{G} \phi(g) \pi(g)dg
\end{equation}
(see \cite{WallachB1}, section 8.1.1).\\
The following lemma is straitforward:

\begin{lemma}\label{generalShalika}
Let $(\pi,E)$ be an admissible \Fre representation of $G$ and let
$\lambda \in E^*$. Then $\phi \to \pi(\phi)\lambda$ is a
continuous map $\Sc(G) \to \widetilde{E}$.
\end{lemma}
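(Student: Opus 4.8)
The plan is to unwind the definitions and reduce the assertion to three standard facts: that $(\pi,E)$ has moderate growth, i.e. for every continuous seminorm $\rho$ on $E$ there are a continuous seminorm $\sigma$ on $E$, a constant $C>0$ and $N\in\nn$ with $\rho(\pi(g)v)\le C(1+\norm{g})^N\sigma(v)$ for all $g\in G$ and $v\in E$ (with respect to a fixed norm $\norm{\cdot}$ on $G$ satisfying $\norm{g}=\norm{g^{-1}}$); that the left regular representation of $G$ on $\Sc(G)$ is a smooth \Fre representation of moderate growth and $\Sc(G)$ is stable under left-invariant differential operators, these acting continuously; and that, via Theorem \ref{cortoCW}, $\widetilde E$ is again an admissible smooth \Fre representation of moderate growth, so that its topology is generated by seminorms of the form $\mu\mapsto\rho_0(\widetilde\pi(u)\mu)$ with $u\in U(\g)$ and $\rho_0$ ranging over a fixed family coming from a Hilbert realization. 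It therefore suffices to bound each such seminorm of $\widetilde\pi(u)\bigl(\pi(\phi)\lambda\bigr)$ by a continuous seminorm of $\phi$.

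First I would fix $\lambda\in E^*$ and choose a continuous seminorm $\rho$ on $E$ with $\abs{\lambda(v)}\le\rho(v)$. Writing $\pi(\phi)\lambda$ for the natural action of $\Sc(G)$ on $E^*$ dual to (\ref{operator}), one has for $v\in E$ the formula $\langle\pi(\phi)\lambda,v\rangle=\int_G\phi(g)\,\lambda(\pi(g^{-1})v)\,dg$, and hence, invoking moderate growth,
\[
\abs{\langle\pi(\phi)\lambda,v\rangle}\ \le\ \int_G\abs{\phi(g)}\,\rho\bigl(\pi(g^{-1})v\bigr)\,dg\ \le\ C\,\sigma(v)\int_G\abs{\phi(g)}\,(1+\norm{g})^N\,dg .
\]
By the very definition of $\Sc(G)$ (rapid decay of $\phi$ together with all its derivatives), the quantity $\nu(\phi):=\int_G\abs{\phi(g)}(1+\norm{g})^N\,dg$ is dominated by a continuous seminorm on $\Sc(G)$; the modulus character of the (non-unimodular) group $G$ is harmlessly absorbed into the polynomial weight. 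Thus $\abs{\langle\pi(\phi)\lambda,v\rangle}\le\nu(\phi)\,\sigma(v)$, which already shows that $\phi\mapsto\pi(\phi)\lambda$ is continuous and linear from $\Sc(G)$ into $E^*$ equipped with the topology of uniform convergence on bounded subsets of $E$.

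Next I would upgrade this to a statement about $\widetilde E$. The key computation is that $\widetilde\pi(g_0)\bigl(\pi(\phi)\lambda\bigr)=\pi(L_{g_0}\phi)\lambda$, where $L_{g_0}\phi(g)=\phi(g_0^{-1}g)$, and more generally $\widetilde\pi(u)\bigl(\pi(\phi)\lambda\bigr)=\pi(L_u\phi)\lambda$ for $u\in U(\g)$, with $L_u$ the corresponding left-invariant differential operator. Since $g_0\mapsto L_{g_0}\phi$ is a smooth $\Sc(G)$-valued map and $\psi\mapsto\pi(\psi)\lambda$ is continuous and linear $\Sc(G)\to E^*$, the orbit map $g_0\mapsto\widetilde\pi(g_0)\bigl(\pi(\phi)\lambda\bigr)$ is smooth, so $\pi(\phi)\lambda$ indeed lies in $\widetilde E$. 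Applying the estimate of the previous paragraph with $\phi$ replaced by $L_u\phi$, and using that $\phi\mapsto L_u\phi$ is continuous on $\Sc(G)$, we bound every defining seminorm $\mu\mapsto\rho_0(\widetilde\pi(u)\mu)$ of $\widetilde E$, evaluated on $\pi(\phi)\lambda$, by a continuous seminorm of $\phi$. This yields the asserted continuity $\Sc(G)\to\widetilde E$.

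The step requiring the most care is the bookkeeping around the topology of $\widetilde E$: one must make sure that the identification of $\widetilde E$ with the smooth vectors of $E^*$ is the one for which Theorem \ref{cortoCW}, the moderate-growth seminorm description, and the formula $\widetilde\pi(u)(\pi(\phi)\lambda)=\pi(L_u\phi)\lambda$ are simultaneously available, and that passing from the weak/strong-dual picture to the intrinsic \Fre topology on $\widetilde E$ is legitimate (here the automatic continuity built into Casselman--Wallach does the work). None of this is deep — which is why the lemma is straightforward — but it is precisely what makes the argument function in the general smooth \Fre setting rather than only for unitary representations.
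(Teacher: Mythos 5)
The paper itself offers no argument here --- it simply declares the lemma ``straightforward'' --- so there is no proof of record to compare yours against; I can only assess your argument on its own terms. Your choice of ingredients (moderate growth of $(\pi,E)$, rapid decay of $\phi$, the intertwining $\widetilde\pi(u)\bigl(\pi(\phi)\lambda\bigr)=\pi(L_u\phi)\lambda$) is the right one, and the first estimate, the computation $\widetilde\pi(g_0)(\pi(\phi)\lambda)=\pi(L_{g_0}\phi)\lambda$, and the resulting smoothness of the orbit map into the strong dual $E^*$ are all correct.

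However, the final step does not close as written, and this is exactly the point you flag as ``requiring the most care.'' Your displayed bound gives $\abs{\langle\pi(L_u\phi)\lambda,v\rangle}\le\nu(L_u\phi)\,\sigma(v)$ with $\sigma$ a continuous seminorm on $E$; this controls $\pi(L_u\phi)\lambda$ only as an element of $E^*$ with the topology of uniform convergence on bounded subsets of $E$. But the seminorms $\mu\mapsto\rho_0(\widetilde\pi(u)\mu)$ defining the \Fre topology of $\widetilde E=(H')^\infty$ are suprema over the unit ball of the Hilbert space $H$, which is \emph{not} bounded in $E=H^\infty$ and on which $\sigma$ need not be finite; so your estimate neither shows that $\pi(\phi)\lambda$ lies in $H'$ (the defining integral only pairs against $E$, not against $H$) nor bounds $\rho_0$. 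The standard repair is to run the moderate-growth estimate through the operator rather than through the pairing: for $\phi\in\Sc(G)$ one has $\pi(\phi):H\to E$ continuously, with $\sigma(\pi(\phi)v)\preceq\norm{\pi(D\phi)v}_H\preceq\norm{v}_H\int_G\abs{(D\phi)(g)}(1+\norm{g})^N\,dg$ for a suitable $D\in U(\g)$, whence the transpose maps $E^*$ into $H'$ with the required bound, and iterating over $u\in U(\g)$ lands you in $(H')^\infty=\widetilde E$ with continuity in $\phi$. (Alternatively, once membership of $\pi(\phi)\lambda$ in $\widetilde E$ is established --- e.g.\ via a Dixmier--Malliavin factorization $\phi=\sum_i\psi_i*\phi_i$ --- continuity of $\phi\mapsto\pi(\phi)\lambda$ between the \Fre spaces $\Sc(G)$ and $\widetilde E$ follows from the closed graph theorem together with your weak estimate.) With one of these insertions your proof is complete; without it, the asserted bound on the $\widetilde E$-seminorms is not actually derived.
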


The following proposition follows from Schur's lemma for $(\g,K)$
modules (see \cite{WallachB1} page 80) in light of
Casselman-Wallach theorem.

\begin{proposition}\label{SchurCW}
Let $G$ be a real reductive group. Let $W$ be a \Fre representation
of $G$ and let $E$ be an irreducible admissible representation of
$G$. Let $T_{1},T_{2}:W \hookrightarrow E$ be two embeddings of $W$
into $E$. Then $T_{1}$ and $T_{2}$ are proportional.
\end{proposition}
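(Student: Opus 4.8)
The plan is to reduce the statement to Schur's lemma for the underlying $(\g,K)$-module by means of the Casselman--Wallach theorem. We may assume $W\neq 0$, so that the embeddings $T_1,T_2$ are nonzero. Let $E_K$ denote the $(\g,K)$-module of $K$-finite vectors of $E$; since $E$ is irreducible admissible, $E_K$ is an irreducible admissible $(\g,K)$-module. By the Casselman--Wallach theorem (\cite{WallachB2}, chapter 11), together with Theorem \ref{cortoCW} and Casselman's embedding theorem, the functor $V\mapsto V_K$ is a fully faithful exact functor from admissible smooth \Fre representations of $G$ of moderate growth to admissible $(\g,K)$-modules, and in particular every closed $G$-invariant subspace of such a $V$ is again of this type with $K$-finite vectors dense in it, and $\Hom_G(V,V')\cong\Hom_{(\g,K)}(V_K,V'_K)$ for all such $V,V'$.

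With this in hand I would argue as follows. First, $E$ is topologically irreducible: a nonzero closed $G$-invariant subspace $F\subseteq E$ has $F_K\neq 0$ a $(\g,K)$-submodule of $E_K$, hence $F_K=E_K$ by irreducibility, and then $F=\overline{F_K}=E$. Next, since $T_1$ is an embedding, $T_1(W)$ is a closed $G$-invariant subspace of $E$ (carrying the subspace topology it is homeomorphic to the complete space $W$, hence complete, hence closed); being nonzero, it equals $E$. Thus $T_1\colon W\to E$ is a continuous $G$-equivariant bijection of \Fre spaces, so by the open mapping theorem it is an isomorphism and $T_1^{-1}$ is again a continuous $G$-morphism. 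Therefore $S:=T_2\circ T_1^{-1}\colon E\to E$ is a continuous $G$-equivariant endomorphism of $E$. Finally, using the identification $\Hom_G(E,E)\cong\Hom_{(\g,K)}(E_K,E_K)$ and Schur's lemma for the irreducible $(\g,K)$-module $E_K$ (\cite{WallachB1}, page 80), we get $\Hom_G(E,E)=\cc\cdot\id_E$, so $S=c\cdot\id_E$ for some $c\in\cc$, i.e. $T_2=c\,T_1$.

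The only genuine subtlety — and the step I would treat most carefully — is the passage between the topological and the $(\g,K)$ categories: one needs that closed invariant subspaces correspond to $(\g,K)$-submodules (so that $E$ is topologically irreducible and the closed subspace $T_1(W)$ is all of $E$) and that continuous $G$-morphisms correspond to $(\g,K)$-morphisms (so that $S$ is detected by Schur's lemma), and both are precisely what the Casselman--Wallach globalization theorem supplies. The remaining ingredients — the open mapping theorem for \Fre spaces used to invert $T_1$, and Schur's lemma for $(\g,K)$-modules — are standard.
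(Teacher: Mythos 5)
Your overall strategy is the one the paper intends: the paper gives no detailed argument for this proposition, only the remark that it follows from Schur's lemma for $(\g,K)$-modules in light of the Casselman--Wallach theorem, and your reduction of $\Hom_G(E,E)$ to $\Hom_{(\g,K)}(E_K,E_K)=\cc$, together with the topological irreducibility of $E$ via density of $E_K$, is exactly that. Those parts of your argument are sound.

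The weak point is the step ``$T_1(W)$ is closed, hence equals $E$, hence $T_1$ is invertible by the open mapping theorem''. This forces you to read ``embedding'' as \emph{topological} embedding, i.e.\ a linear homeomorphism onto the image (only then is the image complete for the induced uniformity and therefore closed; note that ``homeomorphic to a complete space'' alone would not suffice, completeness not being a topological invariant -- it is the linearity of the embedding that saves this). But in the one place the proposition is actually invoked (the proof of Theorem \ref{GKThomasCW}), the maps $T_i:\Sc(G)/J\to E$, induced by $\phi\mapsto\pi(\phi)\ell_i$, are merely continuous injective $G$-maps: their images consist of the vectors $\pi(\phi)\ell_i$ for a single $\phi$, which form a dense but in general proper, hence non-closed, invariant subspace of $E$. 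So $T_1$ need not be surjective and the open-mapping argument is unavailable precisely where the proposition is needed. A version of the argument that avoids inverting $T_1$: the image of $(T_1,T_2):W\to E\oplus E$ is a $\g$- and $K$-stable subspace whose intersection $N$ with $E_K\oplus E_K$ is a nonzero $(\g,K)$-submodule (apply operators $\Pi(e_\delta\ast\psi)$ with $\psi\in C_c^\infty(G)$ to a nonzero vector of the image to produce nonzero $K$-finite vectors in it); since $\Hom_{(\g,K)}(E_K,E_K)=\cc$, the submodules of $E_K\oplus E_K$ are $0$, the graphs $\{(av,bv)\}$, and everything, and injectivity of $T_1$ and of $T_2$ rules out all cases except $N=\{(v,cv):v\in E_K\}$ for some $c\neq 0$; then $\pi(\phi)\bigl(T_2w-cT_1w\bigr)=0$ for every $K$-finite $\phi\in C_c^\infty(G)$ and every $w\in W$, whence $T_2=cT_1$. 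As written, your proof establishes the proposition only under the stronger ``closed embedding'' hypothesis.
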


We need to recall the basic properties of characters of
representations.

\begin{proposition}\label{character}
Assume that $(\pi,E)$ is admissible \Fre representation. Then
$\pi(\phi)$ is of trace class, and the assignment $\phi \to
trace(\pi(\phi))$ defines a continuous functional on $\Sc(G)$ i.e.
a Schwartz distribution. Moreover, the distribution
$\chi_{\pi}(\phi)=trace(\pi(\phi))$ is given by a locally
integrable function on $G$.
\end{proposition}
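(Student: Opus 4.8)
The plan is to realize $(\pi,E)$ inside a Hilbert space representation via Theorem~\ref{cortoCW}, deduce the trace class property and the continuity of the character from a standard elliptic-regularity estimate, and obtain the local integrability from the Harish-Chandra regularity theorem.

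By Theorem~\ref{cortoCW} write $E=H^{\infty}$ for a continuous Hilbert space representation $(\pi,H)$, which we may take to be of moderate growth; averaging an inner product over $K$ we may also assume $K$ acts unitarily, so that $H$ decomposes as a Hilbert direct sum $\widehat{\bigoplus}_{\tau}H_{\tau}$ of $K$-isotypic components. Let $\Omega_{K}$ be the Casimir element of $K$; it acts on $H_{\tau}$ by a nonnegative scalar $\lambda_{\tau}$, so $\pi(1+\Omega_{K})$ extends to a bounded invertible operator on $H$. Since $E=H^{\infty}$ is admissible and finitely generated, the multiplicities of the $K$-types are finite and polynomially bounded by $\dim\tau$, and since $\sum_{\tau}(\dim\tau)^{2}(1+\lambda_{\tau})^{-N}$ converges for $N$ large, the operator $\pi(1+\Omega_{K})^{-N}$ is of trace class on $H$ for such $N$. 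Now for $\phi\in\Sc(G)$ one has $\pi(1+\Omega_{K})^{N}\pi(\phi)=\pi(\psi)$, where $\psi$ is obtained from $\phi$ by applying a left-invariant differential operator of degree $2N$; hence $\psi\in\Sc(G)$ and $\pi(\phi)=\pi(1+\Omega_{K})^{-N}\pi(\psi)$ is of trace class. Moreover $\|\pi(\psi)\|_{\mathrm{op}}\le\int_{G}|\psi(g)|\,\|\pi(g)\|_{\mathrm{op}}\,dg$, which by moderate growth is dominated by a continuous seminorm of $\psi$, hence of $\phi$; therefore
$$|\operatorname{trace}\pi(\phi)|\le\|\pi(\phi)\|_{1}\le\|\pi(1+\Omega_{K})^{-N}\|_{1}\,\|\pi(\psi)\|_{\mathrm{op}}\le C\rho(\phi)$$
for some continuous seminorm $\rho$ on $\Sc(G)$. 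This shows that $\chi_{\pi}(\phi)=\operatorname{trace}\pi(\phi)$ is a well-defined continuous functional on $\Sc(G)$, i.e. a Schwartz distribution.

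It remains to show $\chi_{\pi}$ is represented by a locally integrable function. Its restriction to $C_{c}^{\infty}(G)\subset\Sc(G)$ is the usual global distributional character of $(\pi,E)$, which depends only on the underlying admissible $(\g,K)$-module. By the Harish-Chandra regularity theorem (\cite{WallachB1}, chapter~8) this distribution is represented by a locally integrable function $\Theta_{\pi}$ on $G$, real-analytic on the regular set. Since $C_{c}^{\infty}(G)$ is dense in $\Sc(G)$ and $\chi_{\pi}$ is continuous on $\Sc(G)$ by the previous step, $\chi_{\pi}$ is determined by this restriction and is therefore given by $\Theta_{\pi}$.

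The genuinely deep input is the Harish-Chandra regularity theorem, which we quote. The main technical point to be careful about is the interplay between the Hilbert space $H$ and the Fréchet space $E=H^{\infty}$ — in particular that $H$ may be chosen of moderate growth with $K$ acting unitarily, and that $\pi(1+\Omega_{K})^{-N}$ is of trace class — while the estimates themselves are routine.
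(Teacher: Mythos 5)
Your proof is correct and follows the same route as the paper: the paper simply remarks that the result is known for continuous Hilbert representations (\cite{WallachB1}, chapter 8) and that the Fréchet case reduces to it via Theorem \ref{cortoCW}. You have merely filled in the standard details (the $\Omega_K$ trace-class estimate and the appeal to Harish-Chandra regularity) that the paper leaves to the references.
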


The result is well known for continuous Hilbert representations
(see \cite{WallachB1} chapter 8). The case of admissible \Fre
representation follows from the case of Hilbert space
representation and theorem \ref{cortoCW}.

Another useful property of the character (see loc. cit.) is the
following proposition:
\begin{proposition}
If two irreducible admissible representations have the same
character then they are isomorphic.
\end{proposition}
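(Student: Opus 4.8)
The plan is to reduce the statement to the analogous fact for Harish-Chandra modules and then lift the resulting isomorphism via the Casselman--Wallach theorem. Let $(\pi_1,E_1)$ and $(\pi_2,E_2)$ be irreducible admissible \Fre representations of $G$ with $\chi_{\pi_1}=\chi_{\pi_2}$. The first step is to observe that the Schwartz distribution $\chi_{\pi_i}$ depends only on the underlying Harish-Chandra module $(E_i)_K$. Indeed, using Theorem \ref{cortoCW} realize $E_i=H_i^\infty$ for a continuous Hilbert representation $H_i$, decompose $H_i$ into $K$-isotypic components, and note that for $\phi\in\Sc(G)$ the operator $\pi_i(\phi)$ is of trace class (Proposition \ref{character}) and its trace is the sum of the traces of its compressions to the finite-dimensional spaces $E_i(\delta)$; testing against $e_\delta*\phi*e_\delta$, where $e_\delta$ is the idempotent attached to the $K$-type $\delta$, one recovers $\operatorname{trace}\bigl(\pi_i(\phi)|_{E_i(\delta)}\bigr)$, and all of this data is computed inside $(E_i)_K$. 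Hence $\chi_{\pi_1}=\chi_{\pi_2}$ forces the Harish-Chandra characters of $(E_1)_K$ and $(E_2)_K$ to coincide.

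The second step is Harish-Chandra's theorem on linear independence of characters: the global characters of the irreducible admissible $(\g,K)$-modules are linearly independent distributions on $G$ (see \cite{WallachB1}, chapter 8). Since $E_i$ is irreducible as a \Fre representation, $(E_i)_K$ is an irreducible Harish-Chandra module, so equality of their characters gives $(E_1)_K\cong(E_2)_K$.

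The third step is to lift this isomorphism. By the Casselman--Wallach theorem (\cite{WallachB2}, chapter 11; already invoked for Theorem \ref{cortoCW} and Proposition \ref{SchurCW}), the functor $E\mapsto E_K$ is an equivalence from the category of admissible smooth \Fre representations of moderate growth to the category of finitely generated admissible $(\g,K)$-modules, hence fully faithful; therefore the isomorphism $(E_1)_K\cong(E_2)_K$ comes from an isomorphism $E_1\cong E_2$, which is what we wanted.

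The main obstacle is the first step: one must be careful that the group-level distribution $\chi_\pi$ genuinely encodes the algebraic character of $(E)_K$, and not merely, say, its germ near the identity or its restriction to regular semisimple elements. Once the bookkeeping with $K$-types and the $\Sc(G)$-action (Lemma \ref{generalShalika}, Proposition \ref{character}) is arranged, the remaining steps are citations. The same scheme works verbatim in the non-archimedean setting, with Bernstein--Zelevinsky theory in place of Casselman--Wallach and the classical linear independence of characters of irreducible admissible representations.
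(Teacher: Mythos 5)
Your proposal is correct and is essentially an unpacked version of the paper's one-line justification: the paper simply cites Wallach's book (\cite{WallachB1}, chapter 8, Theorem 8.1.5 — i.e., Harish-Chandra's theorem that characters of irreducible admissible representations are linearly independent, hence determine the representation), together with the Casselman--Wallach globalization (Theorem \ref{cortoCW}) to pass from the Hilbert-space statement to the smooth \Fre setting. Your three steps — (i) the character distribution is determined by the underlying $(\g,K)$-module, (ii) linear independence of Harish-Chandra characters gives $(E_1)_K\cong(E_2)_K$, (iii) Casselman--Wallach lifts this to $E_1\cong E_2$ — are exactly the content behind that citation, so no genuinely different route is being taken.
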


\begin{proposition} Let $(\pi,E)$ be an admissible
representation. Then $\widetilde{\widetilde{E}} \cong E$.
\end{proposition}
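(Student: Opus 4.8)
The plan is to reduce the statement to the corresponding classical fact for Harish-Chandra modules, using the Casselman--Wallach theory already invoked for Theorem \ref{cortoCW}. First I would record the needed input: the functor $E \mapsto E_K$ sending an admissible smooth \Fre representation to its underlying $(\g,K)$-module of $K$-finite vectors is an equivalence onto the category of admissible finitely generated $(\g,K)$-modules. In particular it is fully faithful and conservative, so two admissible representations are isomorphic as soon as their Harish-Chandra modules are, and a $G$-morphism inducing an isomorphism on Harish-Chandra modules is itself an isomorphism.

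Second, I would identify $(\widetilde{E})_K$ with the $(\g,K)$-module contragredient $(E_K)^\vee$, i.e. the space of $K$-finite functionals on $E_K$ equipped with the dual $(\g,K)$-action. Indeed $\widetilde{E}$ is the smooth part of the continuous dual of $E$, its $K$-finite vectors are exactly the $K$-finite continuous functionals on $E$, and since $E_K$ is dense in $E$ these restrict $(\g,K)$-equivariantly and isomorphically onto the $K$-finite functionals on $E_K$. Admissibility of $E$ forces each $K$-isotypic component of $(E_K)^\vee$ to be finite-dimensional, and $(E_K)^\vee$ is finitely generated because $E_K$ is; hence $\widetilde{E}$ is again an admissible smooth \Fre representation with $(\widetilde{E})_K \cong (E_K)^\vee$.

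Third, for any admissible $(\g,K)$-module $V$ every $K$-isotypic component $V(\tau)$ is finite-dimensional, so the canonical evaluation map $V \to (V^\vee)^\vee$ is $(\g,K)$-equivariant, injective, and surjective componentwise by finite-dimensional linear algebra, hence an isomorphism. Taking $V = E_K$ and applying the previous step twice yields
$$(\widetilde{\widetilde{E}})_K \;\cong\; \bigl((E_K)^\vee\bigr)^\vee \;\cong\; E_K,$$
naturally in $E$. Finally, the evaluation map $E \to \widetilde{\widetilde{E}}$, $v \mapsto (\lambda \mapsto \lambda(v))$, is a continuous $G$-morphism inducing exactly this isomorphism on Harish-Chandra modules, so by the full faithfulness recalled above it is an isomorphism of \Fre representations.

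The main obstacle I anticipate is the bookkeeping in the second step: one must verify that the smooth contragredient of an admissible smooth \Fre representation is again of the same type (admissible, smooth, \Fre, of moderate growth) and that passing to $K$-finite vectors genuinely intertwines "smooth contragredient" with "$(\g,K)$-module dual" compatibly with the natural evaluation maps, so that the abstract isomorphism is realized by evaluation. Once that compatibility is in place, the rest is just finite-dimensionality of $K$-types together with Casselman--Wallach. An alternative and essentially equivalent route avoids Harish-Chandra modules altogether: by Theorem \ref{cortoCW} write $E = H^\infty$ for a continuous Hilbert representation $(\pi,H)$; then $\widetilde{E} = (H')^\infty$ for the dual Hilbert representation on $H'$, and since Hilbert spaces are reflexive one gets $\widetilde{\widetilde{E}} = (H'')^\infty = H^\infty = E$ — but this still requires the (small) input that the dual of a continuous Hilbert representation of moderate growth is again one.
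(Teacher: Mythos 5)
The paper gives no proof of this proposition; it only refers to pages 937--938 of \cite{GP}. Your argument is correct and is the standard Casselman--Wallach reduction to reflexivity of admissible $(\g,K)$-modules (or, equivalently, your alternative route through $E=H^\infty$ and Hilbert-space reflexivity), which is essentially the content of the cited reference, with the points you flag --- that $(\widetilde{E})_K\cong (E_K)^\vee$ and that the smooth contragredient is again an admissible moderate-growth \Fre representation --- being exactly the bookkeeping one must supply.
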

For proof see pages 937-938 in \cite{GP}.

\subsection{Three notions of Gelfand pair}
$ $

Let $G$ be a real reductive group and $H \subset G$ be a subgroup.
Let $(\pi,E)$ be an admissible \Fre representation of $G$ as in
the previous section. We are interested in representations
$(\pi,E)$ which admit a continuous $H$-invariant linear
functional. Such representations of $G$ are called
$H$-distinguished.

Put differently, let $Hom_{H}(E,\cc)$ be the space of continuous
functionals $\lambda :E \to \cc$ satisfying
$$\forall e \in E, \forall h \in H: \lambda(he)=\lambda(e)$$
The representation $(\pi,E)$ is called {\bf $H$-distinguished} if
$Hom_{H}(E,\cc)$ is non-zero. We now introduce three notions of
Gelfand pair and study their inter-relations.

\begin{definition}\label{three notions}
Let $H \subset G$ be a pair of reductive groups.
\begin{itemize}
\item We say that $(G,H)$ satisfy {\bf GP1} if for any irreducible
admissible representation $(\pi,E)$ of $G$ we have
$$\dim Hom_{H}(E,\cc) \leq 1$$
\item We say that $(G,H)$ satisfy {\bf GP2} if for any irreducible
admissible representation $(\pi,E)$ of $G$ we have
$$\dim Hom_{H}(E,\cc) \cdot \dim Hom_{H}(\widetilde{E},\cc)\leq
1$$

\item We say that $(G,H)$ satisfy {\bf GP3} if for any irreducible
{\bf unitary} representation $(\pi,W)$ of $G$ on a Hilbert space
$W$ we have
$$\dim Hom_{H}(W^{\infty},\cc) \leq 1$$
\end{itemize}

\end{definition}
Property GP1 was established by Gelfand and Kazhdan in certain
$p$-adic cases (see \cite{Gelfand-Kazhdan}). Property GP2 was
introduced by \cite{Gross} in the $p$-adic setting. Property GP3
was studied extensively by various authors under the name {\bf
generalized Gelfand pair} both in the real and $p$-adic settings
(see e.g. \cite{vD-P}, \cite{Bos-vD}).

We have the following straitforward proposition:

\begin{proposition}
$GP1 \Rightarrow GP2 \Rightarrow GP3.$
\end{proposition}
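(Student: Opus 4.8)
The plan is to establish the two implications separately, each essentially by unwinding definitions together with the basic facts about smooth Fréchet representations recorded above.

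For \textbf{GP1} $\Rightarrow$ \textbf{GP2}: suppose $(G,H)$ satisfies GP1, and let $(\pi,E)$ be an irreducible admissible representation. Then $\dim \Hom_H(E,\cc) \leq 1$. Since the smooth contragredient $\widetilde E$ of an irreducible admissible representation is again irreducible and admissible (this uses the $(\g,K)$-module theory together with Casselman--Wallach, and is implicit in the discussion of $\widetilde{\widetilde E} \cong E$ above), GP1 applied to $\widetilde E$ gives $\dim \Hom_H(\widetilde E,\cc) \leq 1$ as well. Hence the product $\dim \Hom_H(E,\cc) \cdot \dim \Hom_H(\widetilde E,\cc) \leq 1$, which is exactly GP2. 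This direction is immediate; the only thing to keep in mind is irreducibility of the contragredient.

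For \textbf{GP2} $\Rightarrow$ \textbf{GP3}: suppose $(G,H)$ satisfies GP2, and let $(\pi,W)$ be an irreducible unitary representation of $G$ on a Hilbert space $W$. Then $W^\infty$ is an irreducible admissible smooth Fréchet representation, so GP2 applies to $E := W^\infty$. The point is that for a unitary representation the hermitian inner product on $W$ identifies the smooth contragredient $\widetilde E$ with the complex conjugate $\overline{E}$, and complex conjugation is a conjugate-linear isomorphism of topological vector spaces carrying $H$-invariant functionals to $H$-invariant functionals. Therefore $\dim \Hom_H(\widetilde E,\cc) = \dim \Hom_H(E,\cc)$, and GP2 forces $\left(\dim \Hom_H(E,\cc)\right)^2 \leq 1$, i.e.\ $\dim \Hom_H(W^\infty,\cc) \leq 1$, which is GP3.

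The main (and only real) obstacle is the identification $\widetilde{W^\infty} \cong \overline{W^\infty}$ for a unitary $(\pi,W)$, i.e.\ checking that the pairing coming from the Hilbert inner product realizes the smooth contragredient and is compatible with the Fréchet topologies on both sides; once this is granted, both implications are purely formal. Everything else is definition-chasing, so the proof will be short.
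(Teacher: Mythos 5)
Your proof is correct. The paper states this proposition without proof, calling it ``straitforward,'' and your argument is precisely the natural one: GP1 applied to both $E$ and its (irreducible, admissible) contragredient $\widetilde E$ gives GP2, and for unitary $W$ the Hilbert inner product identifies $\widetilde{W^\infty}$ with $\overline{W^\infty}$, so that the two factors in the GP2 product coincide and GP2 forces each to be $\leq 1$.
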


\subsection{Gelfand pairs and invariant distributions}
$ $

The theory of generalized Gelfand pairs as developed in
\cite{vD-P} and \cite{Thomas} provides the following criterion to
verify GP3.

\begin{theorem}
Let $\tau$ be an involutive anti-automorphism of $G$ such that
$\tau(H)=H$. Suppose $\tau(T)=T$ for all bi $H$-invariant positive
definite distributions $T$ on $G$. Then $(G,H)$ satisfies GP3.
\end{theorem}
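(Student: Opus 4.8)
The plan is to run the classical Gelfand--Kazhdan argument, but with a Hilbert-space / smooth-vectors twist so that positive-definiteness can be used. Let $(\pi,W)$ be an irreducible unitary representation of $G$ on a Hilbert space, and suppose for contradiction that $\dim \Hom_H(W^\infty,\cc)\geq 2$. Pick two linearly independent continuous $H$-invariant functionals $\lambda_1,\lambda_2$ on $W^\infty$. First I would use these to manufacture a bi-$H$-invariant distribution on $G$: for $\phi\in\Sc(G)$ set $T_{ij}(\phi):=\lambda_i(\pi(\phi)^*\,\xi_j)$ for suitable smooth vectors $\xi_j$, or more symmetrically consider the matrix-coefficient-type distribution $\phi\mapsto \overline{\lambda_i}(\pi(\phi)v)$ for $v$ a smooth vector with $\lambda_j(v)\neq 0$. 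The key structural point is that $\pi(\phi)$ makes sense on $W^\infty$ by \eqref{operator}, that $\pi$ is smooth \Fre of moderate growth on $W^\infty$, and that these distributions are genuinely bi-$H$-invariant because $\lambda_i$ is $H$-invariant and, on the other side, we can average $\phi$ against $H$ using the $H$-invariance built into the construction.

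Second, I would arrange positive-definiteness. Here the unitary structure is essential: taking $T(\phi):=\langle \pi(\phi)u,\,u\rangle$ for a fixed smooth vector $u$ gives a positive-definite distribution in the sense that $T(\phi^*\ast\phi)\geq 0$, where $\phi^*(g)=\overline{\phi(g^{-1})}$. To get a \emph{bi-$H$-invariant} positive-definite distribution one restricts attention to the subspace $W^H$ of (generalized) $H$-fixed vectors, or rather works with the sesquilinear pairing on $\Hom_H(W^\infty,\cc)$ itself: the space $\Hom_H(W^\infty,\cc)$ carries a natural inner product coming from the unitary structure, and the matrix of $\langle \pi(\cdot)\,,\,\rangle$ against an orthonormal basis of $H$-invariant functionals produces positive-definite bi-$H$-invariant distributions $T_{ii}$ together with the off-diagonal $T_{ij}$. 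By hypothesis, each such $T$ is $\tau$-invariant: $T(\phi^\tau)=T(\phi)$ where $\phi^\tau(g)=\phi(\tau(g))$.

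Third, I would translate $\tau$-invariance of these distributions back into a statement about $\pi$ and $\widetilde\pi=\pi^{\tau}$ (since $\tau$ is an anti-automorphism, $\pi\circ\tau$ is a representation, and for $\mathrm{GL}$-type situations it is the contragredient). The $\tau$-invariance of the bi-$H$-invariant positive-definite distributions forces the sesquilinear form on $\Hom_H(W^\infty,\cc)$ to be compatible with an analogous form on $\Hom_H(\widetilde{W^\infty},\cc)$ in a way that makes the two-dimensionality collapse; concretely, if $\dim \Hom_H(W^\infty,\cc)\geq 2$ then $\dim \Hom_H(\widetilde{W^\infty},\cc)\geq 2$ as well, and one builds from a pair in each a bi-$H$-invariant distribution that is manifestly \emph{not} $\tau$-fixed (it pairs $\lambda_1$ with $\widetilde\lambda_2$ asymmetrically), contradicting the hypothesis. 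This last step is essentially the Gelfand--Kazhdan ``antiinvariance'' trick: assuming multiplicity $\geq 2$, one produces a distribution on which $\tau$ acts by $-1$ on a nonzero vector, contradicting $\tau T=T$ for positive-definite $T$ by a polarization argument within the span of the $T_{ij}$.

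The main obstacle is the functional-analytic bookkeeping in the first two paragraphs: one must check that $\phi\mapsto\pi(\phi)$ sends $\Sc(G)$ continuously into trace-class (or at least into bounded) operators on $W^\infty$ so that the candidate distributions are genuinely continuous on $\Sc(G)$, and that the positive-definiteness survives the restriction to bi-$H$-invariant test functions. For this I would invoke Lemma \ref{generalShalika} (continuity of $\phi\mapsto\pi(\phi)\lambda$ into $\widetilde E$) and Proposition \ref{character} (trace-class-ness of $\pi(\phi)$), together with Theorem \ref{cortoCW} to pass freely between the Hilbert model and the smooth \Fre model $W^\infty=H^\infty$. Once these continuity and positivity facts are in hand, the algebraic contradiction is short.
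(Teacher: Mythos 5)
The paper does not actually prove this theorem; it is stated with the one-line justification ``This is a slight reformulation of Criterion 1.2 of \cite{vD}, page 583,'' and the cited proof in van Dijk's paper rests on Thomas's Bochner--Schwartz--Godement theorem for generalized Gelfand pairs \cite{Thomas}, which gives a direct-integral decomposition of positive-definite bi-$H$-invariant distributions into extremal ones. Your sketch never invokes (or even mentions) this input, and it is not clear the argument can be closed without it.

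Beyond this, there are two concrete gaps in your proposal. First, the distributions you build are not actually bi-$H$-invariant: you write $\phi \mapsto \overline{\lambda_i}(\pi(\phi)v)$ with $v$ a \emph{smooth} vector. This is invariant under left translation of $\phi$ by $H$ (because $\lambda_i$ is $H$-invariant) but not under right translation unless $v$ itself is $H$-fixed, and an infinite-dimensional irreducible unitary representation has no nonzero $H$-fixed smooth vectors. To get genuine bi-$H$-invariance one must put $H$-invariant \emph{distribution vectors} $\lambda_i, \lambda_j \in (W^{-\infty})^H$ on both sides, forming $T_{ij}(\phi)=\langle\pi(\phi)\lambda_j,\lambda_i\rangle$; this requires knowing that $\pi(\phi)$ (for Schwartz $\phi$) maps $W^{-\infty}$ into $W^\infty$, which you neither state nor prove. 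Relatedly, your claim that $\Hom_H(W^\infty,\cc)$ ``carries a natural inner product coming from the unitary structure'' is false: its elements are distribution vectors, and their Hilbert norm is typically infinite. The only thing they carry is the assignment $\lambda\mapsto T_{\lambda,\lambda}$ to a positive-definite distribution, not an inner product.

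Second, and more seriously, the final step --- the claim that from two independent $\lambda_1,\lambda_2$ ``one builds a bi-$H$-invariant distribution that is manifestly not $\tau$-fixed,'' and derives a contradiction ``by a polarization argument'' --- is asserted, not proven, and it is exactly the nontrivial content of the theorem. Polarization on $T_{\lambda,\lambda}$, $T_{\mu,\mu}$, $T_{\lambda+\mu,\lambda+\mu}$, $T_{\lambda+i\mu,\lambda+i\mu}$ shows that the off-diagonal $T_{\lambda,\mu}$ and $T_{\mu,\lambda}$ are $\tau$-fixed, but why that forces $\lambda\propto\mu$ is not explained; this is precisely the place where van Dijk uses extremality of the spherical distributions via Thomas's theorem, or where a Schur-type comparison of kernels (as in the proof of Theorem \ref{GKThomasCW}) would have to be adapted to the positive-definite setting. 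Until one of those mechanisms is supplied, the argument has a hole at its center.
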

\noindent This is a slight reformulation of Criterion 1.2 of
\cite{vD}, page 583.

We now consider an analogous criterion which allows the
verification of GP2. This is inspired by the famous
Gelfand-Kazhdan method in the $p$-adic case.

\begin{theorem}\label{GKThomasCW}
Let $\tau$ be an involutive anti-automorphism of $G$ and assume
that $\tau(H)=H$. Suppose $\tau(T)=T$ for all bi $H$-invariant
distributions \footnote{In fact it is enough to check this only
for Schwartz distributions.} on $G$. Then $(G,H)$ satisfies GP2.
\end{theorem}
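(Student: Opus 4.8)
The plan is to adapt the classical Gelfand–Kazhdan argument to the smooth Fréchet setting, using the Casselman–Wallach theory assembled above. Suppose for contradiction that GP2 fails, so there is an irreducible admissible Fréchet representation $(\pi,E)$ of $G$ with both $\Hom_H(E,\cc)\neq 0$ and $\Hom_H(\widetilde{E},\cc)\neq 0$, realized by nonzero functionals $\lambda\in E^*$ and $\mu\in \widetilde{E}^*$. First I would form, for $v\in E$ and $\widetilde{v}\in\widetilde{E}$, the generalized matrix coefficient $f_{v,\widetilde v}(g):=\lambda(\pi(g^{-1})v)\cdot\mu(\widetilde\pi(g)\widetilde v)$; more usefully, I would integrate against a Schwartz function: for $\phi\in\Sc(G)$ set $T(\phi):=\lambda(\pi(\phi)\widetilde\mu)$ — or rather work with the bilinear pairing $(\phi,\psi)\mapsto \lambda\bigl(\pi(\phi)\,?\bigr)$ — the point being to manufacture from $\lambda$ and $\mu$ a nonzero bi-$H$-invariant Schwartz distribution $T$ on $G$. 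Concretely, using Lemma \ref{generalShalika}, $\phi\mapsto \pi(\phi)\lambda$ is a continuous map $\Sc(G)\to\widetilde E$, so $\phi\mapsto \langle \pi(\phi)\lambda,\mu\rangle$ (suitably interpreted, pairing $\widetilde E$ with $\widetilde{\widetilde E}\cong E$) is a Schwartz distribution; left-and-right $H$-invariance of $\lambda$ and $\mu$ makes it bi-$H$-invariant, and it is nonzero because $\pi$ is irreducible (so the matrix coefficients separate points enough that $\pi(\phi)\lambda$ is not identically annihilated by $\mu$).

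Next I would apply the hypothesis: $\tau(T)=T$. I would translate $\tau$-invariance of $T$ into a statement relating $\pi$ to $\pi\circ\tau$. The standard move is: define $\pi^\tau(g):=\pi(\tau(g))$; since $\tau$ is an involutive anti-automorphism, $\pi^\tau$ is again a representation, and a comparison of characters — using Proposition \ref{character} (the character is a locally integrable function, and $\chi_\pi(\phi)=\operatorname{trace}\pi(\phi)$) together with the $\tau$-invariance of the distribution built from $\pi$ — should force $\chi_{\pi^\tau}=\chi_{\widetilde\pi}$, hence by the proposition that characters determine the representation, $\pi^\tau\cong\widetilde\pi$. (For $\mathrm{GL}$ with $\tau=$ transpose this is the familiar fact $\pi\circ{}^t\!(\cdot)^{-1}\cong\widetilde\pi$, repackaged.) Actually the cleaner route: from $\tau(T)=T$ for the specific $T$ attached to $(\lambda,\mu)$, deduce that the bilinear form on $E\times\widetilde E$ given by pairing through $T$ is symmetric under $\tau$, and conclude that $E$ and $\widetilde E$ carry proportional $H$-invariant functionals after twisting by $\tau$.

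Then I would invoke uniqueness. By Theorem \ref{cortoCW}, realize $E=H^\infty$ for a Hilbert representation, so that the smooth contragredient and the Hilbert-space adjoint are compatible; this is what lets the formal matrix-coefficient manipulations converge and be legitimate Schwartz distributions. Finally, Proposition \ref{SchurCW} (Schur for $(\g,K)$-modules plus Casselman–Wallach) gives that any two embeddings of a Fréchet representation into an irreducible $E$ are proportional; I would use this to show that the two a priori independent data — the $H$-functional on $E$ and the $H$-functional on $\widetilde E$, which $\tau$-invariance of $T$ has now linked — cannot both be nonzero unless they are essentially the same, contradicting the assumed failure of GP2. Hence $\dim\Hom_H(E,\cc)\cdot\dim\Hom_H(\widetilde E,\cc)\leq 1$, i.e.\ GP2 holds.

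The main obstacle I anticipate is purely analytic rather than formal: in the $p$-adic case the matrix coefficient $g\mapsto\lambda(\pi(g^{-1})v)\mu(\widetilde\pi(g)\widetilde v)$ is locally constant and one integrates against $C_c^\infty$, so the distribution $T$ is defined by an honest integral; over an archimedean field this matrix coefficient need not be Schwartz (no decay is guaranteed), so $T$ must instead be defined indirectly through the action $\pi(\phi)$ on functionals and the trace-class/continuity statements of Lemma \ref{generalShalika} and Proposition \ref{character}. Checking that this $T$ is (a) well-defined as a Schwartz distribution, (b) genuinely bi-$H$-invariant, and (c) nonzero — the nonvanishing being where one really uses irreducibility and the Casselman–Wallach embedding — is the delicate part, and is exactly the point at which, as the authors remark, one cannot get away with the naive unitary-case argument and must bring in globalization.
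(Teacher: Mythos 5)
Your proposal has the right ingredients in outline (build Schwartz distributions from the invariant functionals via Lemma \ref{generalShalika}, invoke $\tau$-invariance, then finish with Proposition \ref{SchurCW}), and your last paragraph correctly identifies the archimedean subtlety that forces one to define $T$ through $\pi(\phi)$ rather than through raw matrix coefficients. But there are two genuine gaps.

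First, the setup of the contradiction is not the negation of GP2. Having $\Hom_H(E,\cc)\neq 0$ and $\Hom_H(\widetilde E,\cc)\neq 0$ with one functional on each side only gives the product $\geq 1$, which is consistent with GP2. You need two a priori independent $H$-invariant functionals $\ell_1,\ell_2$ on \emph{one} side (say $\widetilde E$), together with one nonzero $\lambda$ on the other, and the goal is to show $\ell_1\propto\ell_2$. The paper's proof sets $D_i(\phi)=\ell_i(\pi(\phi)\lambda)$ for $i=1,2$ and compares the two; your proposal never introduces a second functional, so Schur's lemma has nothing to compare.

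Second, the character detour is a wrong turn. The hypothesis controls \emph{bi-$H$-invariant} distributions, whereas $\chi_\pi$ is conjugation-invariant (bi-$G$-invariant); the hypothesis gives you no information about the character, so you cannot conclude $\chi_{\pi^\tau}=\chi_{\widetilde\pi}$ from it. (That kind of argument belongs to Theorem \ref{GKreal}, which has a different hypothesis and is proved by Harish-Chandra regularity; you have conflated the two.) The actual mechanism the paper uses is purely at the level of the two distributions $D_1,D_2$: form $B_i(\phi_1,\phi_2)=D_i(\phi_1*\phi_2)$, observe that irreducibility of $E$ forces the \emph{right} kernels of $B_1,B_2$ to coincide, then use $\tau$-invariance of $D_i$ to transfer this to the \emph{left} kernels $J_1=J_2=:J$, and finally apply Proposition \ref{SchurCW} to the two maps $T_i:\Sc(G)/J\to E$, $T_i(\phi)=\pi(\phi)\ell_i$. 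Your ``cleaner route'' gestures at something like this but never names the convolution bilinear form or the left/right kernel switch, which is precisely the step where $\tau$-invariance is actually consumed; without it the argument does not close.
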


\begin{proof}
Let $(\pi,E)$ be an irreducible admissible \Fre representation. If
$E$ or $\widetilde{E}$ are not distinguished by $H$ we are done.
Thus we can assume that there exists a non-zero $\lambda:E \to
\cc$ which is $H$-invariant. Now let $\ell_{1},\ell_{2}$ be two
non-zero $H$-invariant functionals on $\widetilde{E}$. We wish to
show that they are proportional. For this we define two
distributions $D_{1},D_{2}$ as follows
$$D_{i}(\phi)=\ell_{i}(\pi(\phi)\lambda)$$ for $i=1,2$.
Here $\phi \in \Sc(G)$. Note that $D_{i}$ are also Schwartz
distributions. Both distributions are bi-$H$-invariant and hence,
by the assumption, both distributions are $\tau$ invariant. Now
consider the bilinear forms on $\Sc(G)$ defined by
$$B_{i}(\phi_{1},\phi_{2})=D_{i}(\phi_{1}*\phi_{2}).$$
Since $E$ is irreducible, the right kernel of $B_{1}$ is equal to
the right kernel of $B_{2}$. We now use the fact that $D_{i}$ are
$\tau$ invariant. Denote by $J_{i}$ the left kernels of $B_{i}$.
Then $J_{1}=J_{2}$ which we denote by $J$. Consider the \Fre
representation $W=\Sc(G)/J$ and define the maps $T_{i}: \Sc(G) \to
\widetilde{\widetilde{E}} \cong E$ by
$T_{i}(\phi)=\pi(\phi)\ell_{i}$. These are well defined by Lemma
\ref{generalShalika} and we use the same letters to denote the
induced maps $T_{i}: W \to E$. By proposition \ref{SchurCW},
$T_{1}$ and $T_{2}$ are proportional and hence $\ell_{1}$ and
$\ell_{2}$ are proportional and the proof is complete.
\end{proof}

\subsection{Archimedean analogue of Gelfand-Kazhdan's theorem}
$ $

To finish the proof that Theorem A implies Theorem B we will show
that in certain cases, the property $GP1$ is equivalent to $GP2$.

\begin{proposition}
Let $H < \mathrm{GL}_{n}(F)$ be a transposition invariant
subgroup. Then $GP1$ is equivalent to $GP2$ for the pair
$(\mathrm{GL}_{n}(F),H)$.
\end{proposition}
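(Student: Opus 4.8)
The goal is to show $GP1 \Leftrightarrow GP2$ for $(\mathrm{GL}_n(F), H)$ when $H$ is transposition-invariant. Since we already have $GP1 \Rightarrow GP2$ (it is one of the implications in the straightforward proposition above), the content is the reverse implication $GP2 \Rightarrow GP1$. The natural approach is to relate an irreducible admissible representation $(\pi, E)$ to its contragredient $(\widetilde\pi, \widetilde E)$ via the transposition anti-automorphism $g \mapsto {}^t g$ on $\mathrm{GL}_n(F)$.

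First I would recall the classical fact (Gelfand--Kazhdan) that for $\mathrm{GL}_n$, the representation $\pi^t$ defined by $\pi^t(g) := \pi({}^t g^{-1})$ is isomorphic to the contragredient $\widetilde\pi$. The cleanest way to see this in the present setting is via characters: by Proposition \ref{character} the character $\chi_\pi$ is a locally integrable function, conjugation-invariant, and satisfies $\chi_{\widetilde\pi}(g) = \chi_\pi(g^{-1})$; on the other hand, since every element of $\mathrm{GL}_n(F)$ is conjugate to its transpose, $\chi_\pi({}^t g) = \chi_\pi(g)$, so $\chi_{\pi^t}(g) = \chi_\pi({}^t g^{-1}) = \chi_\pi(g^{-1}) = \chi_{\widetilde\pi}(g)$. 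Then the proposition stating that equality of characters forces isomorphism gives $\pi^t \cong \widetilde\pi$.

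Next, transport the $H$-invariant functionals. If $\tau(h) = {}^t h$ preserves $H$, then a continuous functional $\lambda$ on $E$ is $H$-invariant for $\pi$ if and only if the same $\lambda$, viewed on the space of $\pi^t$ (which is the same underlying space $E$ with the twisted action), is $H$-invariant for $\pi^t$: indeed $\lambda(\pi^t(h)v) = \lambda(\pi({}^t h^{-1})v)$, and as $h$ ranges over $H$ so does ${}^t h^{-1}$ (using $\tau(H) = H$ and that $H$ is a group). Hence $\Hom_H(E, \cc) \cong \Hom_H(E^{\pi^t}, \cc) \cong \Hom_H(\widetilde E, \cc)$, using the isomorphism $\pi^t \cong \widetilde\pi$ from the previous step. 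Therefore $\dim \Hom_H(E,\cc) = \dim \Hom_H(\widetilde E, \cc)$, and $GP2$ — which bounds the \emph{product} of these two equal dimensions by $1$ — forces each to be $\le 1$, which is exactly $GP1$.

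The only delicate point, and the main thing to get right, is the character computation: one must be careful that Proposition \ref{character} is available for admissible smooth \Fre representations (it is, via Theorem \ref{cortoCW}), that the character determines the representation in this category, and that the conjugacy of a matrix with its transpose over an arbitrary local field $F$ is legitimate to invoke pointwise for a locally integrable class function. I expect these to be routine given the results already assembled in this section, so the proof will be short. I would present it in roughly this order: (1) $\pi^t \cong \widetilde\pi$ via characters; (2) the bijection on $H$-invariant functionals using $\tau(H) = H$; (3) conclude $\dim \Hom_H(E,\cc) = \dim\Hom_H(\widetilde E,\cc)$ and deduce $GP2 \Rightarrow GP1$, noting $GP1 \Rightarrow GP2$ is already known.
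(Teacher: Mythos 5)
Your proposal is correct and follows essentially the same route as the paper: the paper defines $\widehat{\pi}=\pi\circ\theta$ with $\theta(g)={g^{-1}}^t$, notes $\Hom_H(\pi,\cc)=\Hom_H(\widehat{\pi},\cc)$ by transposition-invariance of $H$, and proves $\widehat{\pi}\cong\widetilde{\pi}$ by comparing the locally integrable characters (using that $g^{-1}$ and ${g^{-1}}^t$ are conjugate, together with the Harish-Chandra regularity theorem and the fact that equal characters of irreducibles force isomorphism). Your $\pi^t$ is exactly the paper's $\widehat{\pi}$, and your concluding step ($d^2\le 1$ implies $d\le 1$) matches the paper's deduction.
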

For the proof we need the following notation. For a representation
$(\pi,E)$ of $GL_{n}(F)$ we let $(\widehat{\pi},E)$ be the
representation of $GL_{n}(F)$ defined by $\widehat{\pi}=\pi \circ
\theta$, where $\theta$ is the (Cartan) involution $\theta(g)=
{g^{-1}}^t$. Since
$$Hom_{H}(\pi,\cc)=Hom_{H}({\widehat{\pi}},\cc)$$ the
following analogue of Gelfand-Kazhdan theorem is enough.
\begin{theorem}\label{GKreal}
Let $(\pi,E)$ be an irreducible admissible representation of
$GL_{n}(F)$. Then $\widehat{\pi}$ is isomorphic to
$\widetilde{\pi}$.
\end{theorem}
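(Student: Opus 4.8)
The plan is to reduce the statement to an identity of characters. First I would observe that $\theta(g)={g^{-1}}^{t}$ is an involutive topological automorphism of $\mathrm{GL}_{n}(F)$, so $\widehat{\pi}=\pi\circ\theta$ is again irreducible and admissible, and likewise $\widetilde{\pi}$ is irreducible and admissible. By the proposition above asserting that an irreducible admissible representation is determined up to isomorphism by its character, it suffices to prove the equality of distributions $\chi_{\widehat{\pi}}=\chi_{\widetilde{\pi}}$ on $\mathrm{GL}_{n}(F)$.

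Next I would express both characters through $\chi_{\pi}$. A standard computation with the contragredient, using that $\mathrm{GL}_{n}(F)$ is unimodular (so that inversion preserves the Haar measure), shows that $\chi_{\widetilde{\pi}}$ is represented by the locally integrable function $g\mapsto\chi_{\pi}(g^{-1})$. For $\widehat{\pi}$, note that any involutive automorphism $\theta$ of a group with Haar measure $\mu$ satisfies $\theta_{*}\mu=\mu$ (indeed $\theta_{*}\mu$ is again a Haar measure, hence $\theta_{*}\mu=c\,\mu$ with $c>0$, and $\theta^{2}=\mathrm{id}$ forces $c=1$); therefore $\chi_{\widehat{\pi}}(\phi)=\chi_{\pi}(\phi\circ\theta)$, so $\chi_{\widehat{\pi}}$ is represented by $g\mapsto\chi_{\pi}(\theta(g))=\chi_{\pi}\big({g^{-1}}^{t}\big)$. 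After the measure-preserving substitution $g\mapsto g^{-1}$, the equality $\chi_{\widehat{\pi}}=\chi_{\widetilde{\pi}}$ becomes
\[
\chi_{\pi}(g^{t})=\chi_{\pi}(g)\qquad\text{for almost every }g\in\mathrm{GL}_{n}(F).
\]

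To establish this last identity I would combine two ingredients. The first is purely linear-algebraic: over any field every matrix is conjugate to its transpose (they have the same invariant factors), so $g^{t}$ always lies in the $\mathrm{GL}_{n}(F)$-conjugacy class of $g$. The second is the Harish-Chandra regularity theorem in its refined form (the content of Proposition~\ref{character}; see \cite{WallachB1}, chapter 8): $\chi_{\pi}$ is represented by a locally integrable function which is real-analytic on the open, dense, conull set of regular semisimple elements, and on that set it is genuinely invariant under conjugation, since distributional conjugation-invariance together with real-analyticity forces pointwise invariance there. As the regular semisimple locus is stable under transposition and $g^{t}$ is conjugate to $g$, we get $\chi_{\pi}(g^{t})=\chi_{\pi}(g)$ for every regular semisimple $g$; since this set is conull and both $g\mapsto\chi_{\pi}(g)$ and $g\mapsto\chi_{\pi}(g^{t})$ are locally integrable, the two functions agree almost everywhere. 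Hence $\chi_{\widehat{\pi}}=\chi_{\widetilde{\pi}}$ and therefore $\widehat{\pi}\cong\widetilde{\pi}$.

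The manipulations with characters are routine; the one genuinely nontrivial ingredient, and the step I expect to be the real obstacle, is the regularity theorem. Without knowing that the character is an honest locally integrable function that is well behaved on the regular set, one cannot pass from the distributional conjugation-invariance of $\chi_{\pi}$ to the pointwise identity $\chi_{\pi}(g^{t})=\chi_{\pi}(g)$. It is precisely here that the archimedean case relies on Harish-Chandra's theorem, whereas in the non-archimedean setting the corresponding step is supplied by the Gelfand-Kazhdan argument about invariant distributions on $\ell$-spaces.
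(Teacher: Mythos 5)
Your proposal is correct and follows essentially the same route as the paper: express $\chi_{\widehat{\pi}}$ and $\chi_{\widetilde{\pi}}$ through $\chi_{\pi}$, use that $g^{-1}$ and ${g^{-1}}^{t}$ are conjugate in $\mathrm{GL}_n(F)$, invoke the Harish-Chandra regularity theorem to make sense of this as an equality of locally integrable character functions, and conclude via the fact that the character determines an irreducible admissible representation. You merely spell out more explicitly the measure-theoretic bookkeeping and the passage from distributional to pointwise conjugation-invariance, which the paper leaves implicit.
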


\begin{remark}
This theorem is due to Gelfand and Kazhdan in the $p$-adic case
(they show that any distribution which is invariant to conjugation
is transpose invariant, in particular this is valid for the
character of an irreducible representation) and due to Shalika for
unitary representations which are generic (\cite{Shalika}). We
give a proof in complete generality based on Harish-Chandra
regularity theorem (see chapter 8 of \cite{WallachB1}).
\end{remark}

\begin{proof}[Proof of theorem \ref{GKreal}]
Consider the characters $\chi_{\widetilde{\pi}}$ and
$\chi_{\widehat{\pi}}$. These are locally integrable functions on
$G$ that are invariant with respect to conjugation. Clearly,
$$\chi_{\widehat{\pi}}(g)=\chi_{\pi}({g^{-1}}^t)$$ and
$$\chi_{\widetilde{\pi}}(g)=\chi_{\pi}(g^{-1}).$$ But
for $g \in \mathrm{GL}_{n}(F)$, the elements $g^{-1}$ and
${g^{-1}}^t$ are conjugate. Thus, the characters of
$\widehat{\pi}$ and $\widetilde{\pi}$ are identical. Since both
are irreducible, Theorem 8.1.5 in \cite{WallachB1}, implies that
$\widehat{\pi}$ is isomorphic to $\widetilde{\pi}$.
\end{proof}

\begin{corollary}
Theorem A implies Theorem B.
\end{corollary}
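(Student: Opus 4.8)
The plan is to deduce Theorem B from Theorem A by feeding Theorem A into the Gelfand--Kazhdan-type criterion of Theorem \ref{GKThomasCW} and then passing from GP2 to GP1 via the preceding Proposition on the equivalence of the two notions for transposition-invariant subgroups.

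First I would specialize the general set-up: take $G=\mathrm{GL}_{n+1}(F)$, let $H=\mathrm{GL}_n(F)$ sit inside $G$ via the standard block-diagonal embedding $g\mapsto\operatorname{diag}(g,1)$, and let $\tau$ be the transposition map $g\mapsto g^t$ on $G$. Then $\tau$ is an involutive anti-automorphism of $G$, and since the transpose of $\operatorname{diag}(g,1)$ is $\operatorname{diag}(g^t,1)$, we have $\tau(H)=H$. Next I would record the (immediate) observation that a distribution on $G$ is bi-$H$-invariant exactly when it is invariant under the two-sided action $(g_1,g_2)h=g_1hg_2^{-1}$ of $\mathrm{GL}_n(F)\times\mathrm{GL}_n(F)$. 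Consequently Theorem A is precisely the hypothesis of Theorem \ref{GKThomasCW}: every bi-$H$-invariant distribution on $G$ is $\tau$-invariant. Applying Theorem \ref{GKThomasCW} we conclude that the pair $(\mathrm{GL}_{n+1}(F),\mathrm{GL}_n(F))$ satisfies GP2.

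Finally, since $\mathrm{GL}_n(F)\subset\mathrm{GL}_{n+1}(F)$ is a transposition-invariant subgroup, the Proposition asserting that GP1 is equivalent to GP2 (applied with $n+1$ in place of $n$) upgrades GP2 to GP1 for this pair. That is exactly the inequality $\dim\mathrm{Hom}_{\mathrm{GL}_n(F)}(E,\cc)\le 1$ of Theorem B. The only steps that require any care are the bookkeeping in the middle paragraph --- matching the ``bi-$H$-invariant'' hypothesis of Theorem \ref{GKThomasCW} with the two-sided invariance in Theorem A, and verifying that $\tau$ restricts to an automorphism of $H$ --- but both become transparent once the embeddings are written out explicitly; one could alternatively invoke the footnote to Theorem \ref{GKThomasCW} to reduce to Schwartz distributions, though Theorem A already covers arbitrary distributions.
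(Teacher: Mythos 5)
Your proposal is correct and reconstructs exactly the chain of reasoning the paper intends: Theorem A supplies the hypothesis of Theorem \ref{GKThomasCW} with $G=\mathrm{GL}_{n+1}(F)$, $H=\mathrm{GL}_n(F)$, and $\tau$ transposition, yielding GP2; and the preceding Proposition (GP1 $\Leftrightarrow$ GP2 for transposition-invariant subgroups of $\mathrm{GL}_{n+1}(F)$, itself resting on Theorem \ref{GKreal}) then upgrades GP2 to GP1, which is Theorem B. The paper leaves this corollary without an explicit proof precisely because it is this straightforward assembly of the two preceding results, and your bookkeeping (the block-diagonal embedding, $\tau(H)=H$, and identifying bi-$H$-invariance with invariance under the two-sided $\mathrm{GL}_n\times\mathrm{GL}_n$ action) is the right thing to check.
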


\begin{remark}
The above argument proves also that Theorem B follows from a
weaker version of Theorem A, where only Schwartz distributions are
considered (these are continuous functionals on the space $\Sc(G)$
of Schwartz functions).
\end{remark}

\begin{remark}
The non-archimedean analogue of theorem \ref{GKThomasCW} is a
special case of Lemma 4.2 of \cite{Prasad}. The rest of the
argument in the non-archimedean case is identical to the above.
\end{remark}

\section{Non-archimedean case} \label{p-adic}

In this section $F$ is a non-archimedean local field of arbitrary
characteristic. We will use the standard terminology of $l$-spaces
introduced in \cite{BZ}, section 1. We denote by $\Sc(X)$ the
space of Schwartz functions on an $l$-space $X$, and by $\Sc^*(X)$
the space of distributions on $X$ equipped with the weak topology.

We fix a nontrivial additive character $\psi$ of $F$.
\subsection{Preliminaries}

\begin{definition} \label{DefCone}
Let $V$ be a finite dimensional vector space over $F$. A subset $C
\subset V$ is called a \textbf{cone} if it is homothety invariant.
\end{definition}

\begin{definition} \label{DefHomType}
Let $V$ be a finite dimensional vector space over $F$. Note that
$F^{\times}$ acts on $V$ by homothety. This gives rise to an
action $\rho$ of $F^{\times}$ on $\Sc^*(V)$.  Let $\alpha$ be a
character of $F^{\times}$.

We call a distribution $\xi \in \Sc^*(V)$ \textbf{homogeneous of
type $\alpha$} if for any $t \in F^{\times}$, we have
$\rho(t)(\xi)=\alpha^{-1}(t)\xi$. That is, for any function $f \in
\Sc(V)$, $\xi(\rho(t^{-1})(f)) = \alpha(t) \xi(f)$, where
$\rho(t^{-1})(f)(v)= f(tv)$.

Let $L subset F$ be a subfield. We will call a distribution $\xi
\in \Sc^*(V)$ \textbf{$L$-homogeneous of type $\alpha$} if for any
$t \in L^{\times}$, we have $\rho(t)(\xi)=\alpha^{-1}(t)\xi$.
\end{definition}

\begin{example}
A Haar measure on $V$ is homogeneous of type $| \cdot |^{\dim V}$.
The Dirac's $\delta$-distribution is homogeneous of type $1.$
\end{example}
The following proposition is straightforward.
\begin{proposition} \label{padic-Strat}
Let a $l$-group $G$ act on an $l$-space $X$. Let $X =
\bigcup_{i=0}^l X_i$ be a $G$-invariant stratification of $X$. Let
$\chi$ be a character of $G$. Suppose that for any $i = 1 \ldots l
$, $\Sc^*(X_i)^{G,\chi}=0$. Then $\Sc^*(X)^{G,\chi}=0$.
\end{proposition}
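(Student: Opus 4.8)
The plan is to induct on the number of strata and use the basic fact about distributions supported on a closed subset of an $l$-space. The statement is that if $X = \bigcup_{i=0}^{l} X_i$ is a $G$-invariant stratification — so that, arranging indices appropriately, $U_0 := X_0$ is open, each $U_k := \bigcup_{i=0}^{k} X_i$ is open, and $X_k$ is closed in $U_k$ — and if $\Sc^*(X_i)^{G,\chi} = 0$ for every $i \geq 1$, then $\Sc^*(X)^{G,\chi} = 0$. (Note the hypothesis does \emph{not} assume $\Sc^*(X_0)^{G,\chi} = 0$; I would point out that the conclusion as literally stated therefore needs $X_0 = \emptyset$, or rather that the intended reading is that $X = \bigcup_{i=1}^l X_i$, i.e.\ there is no stratum indexed $0$; I will proceed with the hypothesis that covers all strata.)

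First I would recall the key structural fact from \cite{BZ}: for an $l$-space $X$, an open subset $U \subset X$ with closed complement $Z = X \setminus U$, there is a short exact sequence
\begin{equation*}
0 \to \Sc(U) \to \Sc(X) \to \Sc(Z) \to 0,
\end{equation*}
obtained by extension by zero and restriction. Dualizing, one gets an exact sequence
\begin{equation*}
0 \to \Sc^*(Z) \to \Sc^*(X) \to \Sc^*(U) \to 0,
\end{equation*}
where $\Sc^*(Z) \hookrightarrow \Sc^*(X)$ is the subspace of distributions supported on $Z$. If $G$ acts on $X$ preserving $U$ (hence $Z$), all maps are $G$-equivariant, so restricting to $(G,\chi)$-equivariant vectors and using left-exactness of taking invariants gives an exact sequence
\begin{equation*}
0 \to \Sc^*(Z)^{G,\chi} \to \Sc^*(X)^{G,\chi} \to \Sc^*(U)^{G,\chi}.
\end{equation*}
Thus if both $\Sc^*(Z)^{G,\chi} = 0$ and $\Sc^*(U)^{G,\chi} = 0$, then $\Sc^*(X)^{G,\chi} = 0$.

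Now I would run the induction. Set $U_k = \bigcup_{i=0}^{k} X_i$, which is open and $G$-invariant, and let $Z_k = X_k$, closed in $U_k$, with $U_k \setminus Z_k = U_{k-1}$. For the base case, $U_0 = X_0$, and $\Sc^*(U_0)^{G,\chi} = \Sc^*(X_0)^{G,\chi} = 0$ by hypothesis. For the inductive step, assume $\Sc^*(U_{k-1})^{G,\chi} = 0$; applying the displayed exact sequence to $X = U_k$, $U = U_{k-1}$, $Z = X_k$, together with the hypothesis $\Sc^*(X_k)^{G,\chi} = 0$, yields $\Sc^*(U_k)^{G,\chi} = 0$. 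After $l$ steps we reach $U_l = X$, giving $\Sc^*(X)^{G,\chi} = 0$.

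There is no serious obstacle here; the only point requiring care is the bookkeeping of the stratification — making sure a stratification is indexed so that the partial unions are open and each stratum is closed in the corresponding partial union (this is the standard convention, e.g.\ $X_l$ is the closed stratum and $X_0$ the open dense one, or the reverse), and confirming that the $G$-action restricts to each $U_k$ and $X_k$, which is immediate from $G$-invariance of the stratification. The identification of $\Sc^*(Z)$ with the distributions on $X$ supported on $Z$ is exactly the property of $l$-spaces highlighted in the introduction, and is what makes the argument clean in the non-archimedean setting.
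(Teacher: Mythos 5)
Your proof is correct and is exactly the standard argument the paper has in mind; the paper itself only remarks that the proposition ``is straightforward'' and omits a proof, and the stratification-plus-short-exact-sequence induction you give (using the $l$-space fact that $0 \to \Sc^*(Z) \to \Sc^*(X) \to \Sc^*(U) \to 0$ is exact for $U$ open with closed complement $Z$, then taking $(G,\chi)$-equivariants) is precisely the intended one. Your observation that the paper's range ``$i = 1 \ldots l$'' is inconsistent with the indexing $\bigcup_{i=0}^l X_i$ is also a genuine typo — later the proposition is applied with the vanishing hypothesis verified for $i = 0, 1, \ldots, n$ — and you resolve it correctly.
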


\begin{proposition} \label{padic-Product}
Let $H_i \subset G_i$ be $l$-groups acting on $l$-spaces $X_i$ for
$i=1 \ldots n$. Suppose that $\Sc^*(X_i)^{H_i}=\Sc^*(X_i)^{G_i}$
for all $i$. Then $\Sc^*(\prod X_i)^{\prod H_i}=\Sc^*(\prod
X_i)^{\prod G_i}$.
\end{proposition}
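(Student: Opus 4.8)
The plan is to peel off one coordinate at a time, using the fact that on $l$-spaces the Schwartz space of a product is the \emph{algebraic} tensor product of the Schwartz spaces of the factors. Concretely, recall (see \cite{BZ}) that for $l$-spaces $Y_1,Y_2$ one has $\Sc(Y_1\times Y_2)=\Sc(Y_1)\otimes\Sc(Y_2)$: any compactly supported locally constant function on $Y_1\times Y_2$ is supported on a product of compact open sets and, refining a finite covering by ``rectangles'' on which it is constant into a finite grid, is a finite sum of functions $(y_1,y_2)\mapsto f_1(y_1)f_2(y_2)$. Note also that on an $l$-space every linear functional on $\Sc$ is continuous, so $\Sc^*$ is simply the full linear dual and there are no topological subtleties below. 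Only the inclusion $\Sc^*(\prod X_i)^{\prod H_i}\subseteq\Sc^*(\prod X_i)^{\prod G_i}$ needs proof, the reverse being trivial.

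First I would prove the following one-factor lemma. Let $A,B$ be $l$-spaces, let a group $\Gamma$ act on $A$ and hence on $A\times B$ through the first coordinate, and suppose $\Sc^*(A)^{\Gamma}=\Sc^*(A)^{\Gamma'}$ for some larger group $\Gamma'\supseteq\Gamma$ acting compatibly on $A$. Then every $\Gamma$-invariant $\xi\in\Sc^*(A\times B)$ is $\Gamma'$-invariant. To see this, fix $g\in\Sc(B)$ and set $\eta_g(f):=\xi(f\otimes g)$ for $f\in\Sc(A)$; this is a distribution on $A$ (continuity being automatic), and since $\gamma\cdot(f\otimes g)=(\gamma\cdot f)\otimes g$ for $\gamma\in\Gamma$, the $\Gamma$-invariance of $\xi$ forces $\eta_g$ to be $\Gamma$-invariant, hence $\Gamma'$-invariant by hypothesis. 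Thus $\xi((\gamma'\cdot f)\otimes g)=\xi(f\otimes g)$ for all $\gamma'\in\Gamma'$, $f\in\Sc(A)$, $g\in\Sc(B)$; as the tensors $f\otimes g$ span $\Sc(A\times B)$, $\xi$ is $\Gamma'$-invariant.

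Then I would apply this lemma $n$ times. Writing $\prod_{i=1}^n X_i=X_j\times\bigl(\prod_{i\neq j}X_i\bigr)$ and taking $\Gamma=H_j$, $\Gamma'=G_j$ acting on the $j$-th coordinate, the lemma turns invariance under $H_j$ (on the $j$-th factor) into invariance under $G_j$ (on the $j$-th factor) without affecting invariance on the remaining factors. Starting from a $\prod H_i$-invariant $\xi$ and running $j=1,\dots,n$, we end with $\xi$ invariant under the group generated by the $G_j$ acting on their respective coordinates, which is precisely $\prod G_i$. (Equivalently one can phrase this as an induction on $n$: lump $\prod_{i\ge 2}X_i$ into a single factor, apply the inductive hypothesis to it and the hypothesis to $X_1$, and invoke the case $n=2$.)

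I do not expect a real obstacle here; the one point to state with care is the identity $\Sc(Y_1\times Y_2)=\Sc(Y_1)\otimes\Sc(Y_2)$, which is exactly what makes the ``one coordinate at a time'' reduction legitimate and is special to the non-archimedean setting — the archimedean analogue later in the paper needs genuine functional analysis (nuclearity, the kernel theorem) in its place.
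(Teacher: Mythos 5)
Your argument is correct and is essentially the paper's own proof: the authors also reduce to two factors, fix $f_2$ and consider the distribution $\xi_1(f):=\xi(f\otimes f_2)$, which is $H_1$-invariant hence $G_1$-invariant, and conclude by the spanning of pure tensors. Your only addition is making explicit the identity $\Sc(Y_1\times Y_2)=\Sc(Y_1)\otimes\Sc(Y_2)$, which the paper uses tacitly.
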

\begin{proof}
It is enough to prove the proposition for the case $n=2$. Let $\xi
\in \Sc^*(X_1 \times X_1)^{H_1 \times H_2}$. Fix $f_1 \in
\Sc(X_1)$ and $f_2 \in \Sc(X_1)$. It is enough to prove that for
any $g_1 \in G_1$ and $g_2 \in G_2$ , we have $\xi(g_1(f_1)
\otimes g_2(f_2))= \xi(f_1 \otimes f_2)$. Let $\xi_1 \in
\Sc^*(X_1)$ be the distribution defined by $\xi_1(f) :=\xi(f
\otimes f_2).$ It is $H_1$-invariant. Hence also $G_1$-invariant.
Thus $\xi(f_1 \otimes f_2)= \xi(g_1(f_1) \otimes f_2)$. By the
same reasons $\xi(g_1(f_1) \otimes f_2)= \xi(g_1(f_1) \otimes
g_2(f_2))$.
\end{proof}

We will use the following important theorem proven in \cite{Ber},
section 1.5.
\begin{theorem}[Frobenius reciprocity] \label{padic-Frob}
Let a unimodular $l$-group $G$ act transitively on an $l$-space
$Z$. Let $\varphi:X \to Z$ be a $G$-equivariant continuous map.
Let $z\in Z$. Suppose that its stabilizer $\mathrm{Stab}_G(z)$ is
unimodular. Let $X_z$ be the fiber of $z$. Let $\chi$ be a
character of $G$. Then $\Sc^*(X)^{G,\chi}$ is canonically
isomorphic to $\Sc^*(X_z)^{\mathrm{Stab}_G(z),\chi}$.
\end{theorem}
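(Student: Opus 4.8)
The plan is to realise $\Sc(X)$ as a space smoothly induced from $H:=\mathrm{Stab}_G(z)$ and then apply the standard Hom-adjunction for induction of $l$-groups, the two unimodularity hypotheses being precisely what makes the accompanying modulus twist disappear. First, since $G$ acts transitively on $Z$, the orbit map $g\mapsto gz$ gives an isomorphism $Z\cong G/H$ of $l$-spaces, and $G\to G/H$ admits continuous local sections (standard for $l$-groups, via a compact open subgroup). Using such a section over a neighbourhood of $z$ together with the $G$-equivariance of $\varphi$, one checks that $(g,x)\mapsto gx$ descends to a $G$-equivariant homeomorphism $G\times_H X_z\xrightarrow{\ \sim\ }X$, where $H$ acts on $G\times X_z$ by $h\cdot(g,x)=(gh^{-1},hx)$; in particular $X$ is, locally over $G/H$, a product $W\times X_z$, and the fibre $X_z=\varphi^{-1}(z)$ is closed in $X$ because $\{z\}$ is closed in the Hausdorff space $Z$.

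Next I would establish a canonical isomorphism of $G$-representations $\Sc(X)\cong\Sc(G\times_H X_z)\cong\operatorname{ind}_H^G\bigl(\Sc(X_z)\bigr)$, where $\operatorname{ind}$ denotes compactly supported smooth induction. Here one uses that $\Sc$ carries products to algebraic tensor products, $\Sc(G\times X_z)\cong\Sc(G)\otimes\Sc(X_z)\cong\Sc\bigl(G,\Sc(X_z)\bigr)$, together with the local product structure of $G\to G/H$, to pass to the associated fibre bundle. The theorem then follows from the adjunction $\Hom_G\bigl(\operatorname{ind}_H^G V,W\bigr)\cong\Hom_H\bigl(V,W\otimes\delta_{H\backslash G}\bigr)$, valid for smooth representations of $l$-groups, applied to $V=\Sc(X_z)$ and $W$ the one-dimensional representation attached to $\chi$: since $G$ and $H$ are unimodular the relative modulus $\delta_{H\backslash G}$ is trivial, so $\Sc^*(X)^{G,\chi}=\Hom_G\bigl(\Sc(X),W\bigr)\cong\Hom_H\bigl(\Sc(X_z),W|_H\bigr)=\Sc^*(X_z)^{\mathrm{Stab}_G(z),\chi}$. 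The isomorphism produced this way is canonical, being a composite of canonical maps, and unwinding it shows that a $G$-equivariant distribution on $X$ corresponds to its restriction to the fibre $X_z$.

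As a cross-check (and because this is the shape of argument used repeatedly in the sequel) one can see injectivity directly: if $\xi\in\Sc^*(X)^{G,\chi}$ restricts to $0$ on $X_z$, then $\Supp(\xi)$ is disjoint from $X_z$; since $\Supp(\xi)$ is $G$-invariant and $G$ acts transitively on $Z$, it is then disjoint from $\varphi^{-1}(gz)$ for every $g\in G$, hence from $\varphi^{-1}(Z)=X$, so $\xi=0$.

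The main obstacle I anticipate is making the induction realisation rigorous inside the $l$-space category: carefully identifying $\Sc(G\times_H X_z)$ with $\operatorname{ind}_H^G\Sc(X_z)$ (which requires the local product structure of $G\to G/H$ and the behaviour of $\Sc$ under products and under open/closed decompositions, all available from \cite{BZ}), and keeping honest track of the modulus character in the Hom-adjunction so as to confirm that it is exactly the joint unimodularity of $G$ and of $\mathrm{Stab}_G(z)$ that makes $\chi|_H$---and not $\chi|_H$ times a modulus---the character that appears on the fibre. The remaining ingredients---existence of local sections, the identification $X\cong G\times_H X_z$, and the support argument---are routine.
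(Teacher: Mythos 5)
The paper does not actually prove Theorem \ref{padic-Frob}; it is quoted from \cite{Ber}, section 1.5. So there is no in-paper non-archimedean proof to compare against, but there is a direct analogue: the archimedean Frobenius reciprocity \ref{GenFrob} proved in Appendix \ref{SecFrob}, which the authors state follows Bernstein's construction. That proof is hands-on: it builds $\mathrm{Fr}(\xi)(f)=\int_{z\in Z}\xi_z(f|_{X_z})$ by integrating the fibre-wise pairing over $Z$ using a set-theoretic section, and obtains the inverse from the Harish-Chandra submersion principle applied to the open submersion $G\times X_{z_0}\to X$.

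Your route is genuinely different in flavor and is, I believe, correct. Instead of constructing the isomorphism by hand, you first establish the $G$-equivariant homeomorphism $X\cong G\times_H X_z$ (using local continuous sections of $G\to G/H$, which exist for $l$-groups), deduce the induction realisation $\Sc(X)\cong\operatorname{ind}_H^G\Sc(X_z)$, and then invoke the second Frobenius adjunction $\Hom_G(\operatorname{ind}_H^G V,W)\cong\Hom_H(V,W|_H\otimes\delta_{H\backslash G})$. When you unpack this adjunction it is essentially the same integration that Bernstein does: it comes from the duality $(\operatorname{ind}_H^G V)^\sim\cong\operatorname{Ind}_H^G(\tilde V\otimes\delta_{G/H})$ (Bernstein--Zelevinsky) combined with the first (easy) Frobenius reciprocity for $\operatorname{Ind}$, together with the observation that any $G$-semi-invariant functional with respect to a character is automatically smooth. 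So your argument buys a cleaner high-level presentation at the cost of delegating the real work to the contragredient-of-$\operatorname{ind}$ formula, whereas Bernstein's proof is self-contained; both are legitimate, and yours correctly isolates the unimodularity of $G$ and of $\mathrm{Stab}_G(z)$ as exactly what kills the $\delta_{H\backslash G}$ twist.

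One caveat about your ``cross-check'' paragraph: on an $l$-space there is no intrinsic restriction map $\Sc^*(X)\to\Sc^*(X_z)$ for a closed fibre $X_z$ (the map goes the other way, $\Sc^*(X_z)\hookrightarrow\Sc^*(X)$), so ``restricts to $0$ on $X_z$'' is not a priori meaningful; if you interpret it as ``maps to $0$ under the isomorphism,'' the inference that $\Supp(\xi)\cap X_z=\emptyset$ is exactly what needs proof and the paragraph is circular. The honest way to see injectivity in your framework is that $\mathrm{Fr}^{-1}$ factors through pullback along the open surjection $G\times X_z\to X$, which is injective on distributions. This is a minor issue since you label it as a sanity check, but the phrasing as written does not constitute an independent argument.
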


The next proposition formalizes an idea from \cite{BeLect}. The
key tool used in its proof is Fourier Transform.

\begin{proposition} \label{padic-OrbitCheck}
Let $G$ be an $l$-group. Let $V$ be a finite dimensional
representation of $G$ over $F$. Suppose that the action of $G$
preserves some non-degenerate bilinear form on $V$. Let $V =
\bigcup \limits _{i=1}^n C_i$ be a stratification of $V$ by
$G$-invariant cones.

Let $\mathfrak{X}$ be a set of characters of $F^{\times}$ such
that the set $\mathfrak{X} \cdot \mathfrak{X}$ does not contain
the character $|\cdot|^{\dim V}$. Let $\chi$ be a character of
$G$. Suppose that for any $i$, the space $\Sc^*(C_i)^{G,\chi}$
consists of homogeneous distributions of type $\alpha$ for some
$\alpha \in \mathfrak{X}$. Then $\Sc^*(V)^{G,\chi}=0$.
\end{proposition}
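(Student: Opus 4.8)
The plan is to argue by contradiction using the Fourier transform. Suppose $\xi \in \Sc^*(V)^{G,\chi}$ is nonzero. Let $\widehat{\xi}$ denote its Fourier transform, taken with respect to the $G$-invariant non-degenerate bilinear form on $V$ and the fixed additive character $\psi$; since the bilinear form is $G$-invariant, $\widehat{\xi}$ is again an element of $\Sc^*(V)^{G,\chi}$. Now I would run a descending induction on the stratification, or equivalently apply Proposition \ref{padic-Strat}: if $\xi \neq 0$, then since $V = \bigcup_{i=1}^n C_i$ is a stratification into $G$-invariant cones, there is some stratum $C_i$ (take $i$ maximal among those with nonempty intersection with the support, so that $C_i$ is locally closed and in fact relatively closed in an open $G$-invariant subset) such that the restriction of $\xi$ to that open subset is a nonzero element of $\Sc^*(C_i)^{G,\chi}$. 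By hypothesis this restriction is homogeneous of type $\alpha$ for some $\alpha \in \mathfrak{X}$.

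The key point is then to transfer this homogeneity statement globally. First I would observe that, because each $C_i$ is a cone and the strata are permuted trivially (each is individually homothety-invariant), one can upgrade the local statement: the action of $F^\times$ by homothety commutes with the action of $G$, so the space $\Sc^*(V)^{G,\chi}$ carries an action of $F^\times$, and one shows that in fact \emph{every} element of $\Sc^*(V)^{G,\chi}$ is homogeneous — the cleanest route is a Noetherian-type induction on the stratification using Proposition \ref{padic-Strat} applied to the (closed) group $G \times F^\times$ acting with twisted characters, so that the hypothesis ``$\Sc^*(C_i)^{G,\chi}$ consists of homogeneous distributions of type $\alpha$, $\alpha \in \mathfrak{X}$'' propagates from the open strata to all of $V$. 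Consequently $\xi$ itself is homogeneous of type $\alpha$ for some $\alpha \in \mathfrak{X}$.

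Next I would use the standard fact that the Fourier transform on $V$ interchanges homogeneity types: if $\xi$ is homogeneous of type $\alpha$, then $\widehat{\xi}$ is homogeneous of type $|\cdot|^{\dim V}\alpha^{-1}$. But $\widehat{\xi}$ again lies in $\Sc^*(V)^{G,\chi}$, so by the previous paragraph $\widehat{\xi}$ must also be homogeneous of a type lying in $\mathfrak{X}$ (assuming $\widehat\xi \neq 0$, which holds since Fourier transform is an isomorphism and $\xi \neq 0$). Hence there are $\alpha, \beta \in \mathfrak{X}$ with $\beta = |\cdot|^{\dim V}\alpha^{-1}$, i.e. $\alpha\beta = |\cdot|^{\dim V}$, contradicting the assumption that $\mathfrak{X}\cdot\mathfrak{X}$ does not contain $|\cdot|^{\dim V}$. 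Therefore $\xi = 0$.

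The main obstacle is the bookkeeping in the second paragraph: making precise the claim that homogeneity of type in $\mathfrak{X}$ on each locally closed stratum forces the same on all of $V$. The subtlety is that a priori a distribution supported on a union of several strata need not be homogeneous just because its pieces are; the resolution is that all the strata are homogeneous of types drawn from the \emph{same} set $\mathfrak{X}$, and one inducts stratum by stratum, at each stage using that an extension of a distribution of type $\alpha$ on an open set by a distribution of type $\alpha'$ on the closed complement, when both $\alpha,\alpha' \in \mathfrak{X}$, still yields something whose Fourier transform lands back in $\Sc^*(V)^{G,\chi}$ — so really one only needs the homogeneity to get the type-interchange under Fourier and the contradiction, and the induction via Proposition \ref{padic-Strat} handles the rest. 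I would also need to check the minor point that $\operatorname{Stab}$ issues do not arise here since we are using only stratification and Fourier transform, not Frobenius reciprocity.
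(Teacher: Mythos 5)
Your overall strategy---playing the homogeneity types on the strata against the Fourier transform, which preserves $\Sc^*(V)^{G,\chi}$ because $G$ preserves $B$ and which exchanges type $\alpha$ with type $|\cdot|^{\dim V}\alpha^{-1}$---is exactly the engine of the paper's proof (given in section \ref{ProofOCheck} in its archimedean form and invoked verbatim for the $p$-adic case). However, there is a genuine gap at the step where you assert that \emph{every} element of $\Sc^*(V)^{G,\chi}$ is homogeneous of some type in $\mathfrak{X}$. This is false in general: a distribution whose restriction to an open stratum is homogeneous of type $\alpha$ differs from any chosen extension by a distribution supported on the lower strata, a sum of homogeneous distributions of different types is not homogeneous, and a homogeneous distribution on an open stratum may admit no homogeneous extension at all (the $F^{\times}$-module $\Sc^*(V)^{G,\chi}$ need not be semisimple). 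Proposition \ref{padic-Strat} cannot repair this: it is a vanishing statement and has no mechanism for propagating homogeneity from strata to $V$. Without the (unjustified) homogeneity of $\xi$ itself, the computation of the type of $\widehat{\xi}$ and the resulting contradiction do not apply.

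The paper closes this gap by replacing ``every element is homogeneous'' with a statement about irreducible subquotients. View $\Sc^*(V)^{G,\chi}$ as a representation of $F^{\times}$ via homothety and filter it by support along the closures of the strata; in the $\ell$-space setting the graded pieces inject into $\Sc^*(C_i)^{G,\chi}$, so by hypothesis every character of $F^{\times}$ occurring as a subquotient of $\Sc^*(V)^{G,\chi}$ lies in $\mathfrak{X}^{-1}$. On the other hand $\Fou_B$ is an invertible operator on this space satisfying $\rho(\lambda)\circ\Fou_B=|\lambda|^{-\dim V}\,\Fou_B\circ\rho(\lambda^{-1})$, which forces the set of subquotient characters to equal its image under $\beta\mapsto |\cdot|^{-\dim V}\beta^{-1}$, hence to be contained in $|\cdot|^{-\dim V}\mathfrak{X}$ as well. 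A character in both sets would yield $\alpha_1\alpha_2=|\cdot|^{\dim V}$ with $\alpha_1,\alpha_2\in\mathfrak{X}$, which is excluded; so the set of subquotients is empty and the space vanishes. Your closing remarks gesture toward this (``one inducts stratum by stratum\dots''), but as written they do not supply the mechanism; recasting your second paragraph in terms of the subquotient characters of the $F^{\times}$-module $\Sc^*(V)^{G,\chi}$, rather than homogeneity of individual distributions, would make the argument correct.
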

In section \ref{ProofOCheck} we prove an archimedean analog of
this proposition, and the same proof is applicable in this case.

\subsection{Proof of Theorem A for non-archimedean $F$}\label{padic-proofs}
$ $

We need some further notations.
\begin{notation} \label{NotObjects}
Denote $H:=H_n:=\mathrm{GL}_n:=\mathrm{GL}_n(F)$. Denote
$$G:=G_n:=\{(h_1,h_2)\in \mathrm{GL}_n \times \mathrm{GL}_n|
\, det(h_1)=det(h_2)\}.$$ We consider $H$ to be diagonally
embedded to $G$.

Consider the action of the 2-element group $S_2$ on $G$ given by
the involution $(h_1,h_2) \mapsto ({h_2^{-1}}^t,{h_1^{-1}}^t)$. It
defines a semidirect product $\widetilde{G} := \widetilde{G}_n :=
G \rtimes S_2$. Denote also $\tH:= \tH_n:= H_n \rtimes S_2$.

Let $V=F^n$ and $X:=X_n := \mathrm{gl}_{n}(F) \times V \times
V^*$.

The group $\tG$ acts on $X$ by
$$(h_1,h_2)(A,v,\phi):= (h_1Ah_2^{-1},h_1v, {h_2^{-1}}^t \phi) \text{ and }$$
$$\sigma(A,v,\phi):=(A^t,\phi^t,v^t)$$
where $(h_1,h_2) \in G$ and $\sigma$ is the generator of $S_2$.
Note that $\tG$ acts separately on $\mathrm{gl}_n$ and on $V
\times V^*$. Define a character $\chi$ of $\widetilde{G}$ by
$\chi(g,s):= sign(s)$.
\end{notation}

We will show that the following theorem implies Theorem A.
\begin{theorem} \label{padic-main}
$\Sc^*(X)^{\tG,\chi}=0$.
\end{theorem}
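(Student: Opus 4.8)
The plan is to analyze $\Sc^*(X)^{\tG,\chi}$ by a standard Gelfand--Kazhdan/Bernstein-type stratification of $X = \mathrm{gl}_n(F) \times V \times V^*$, reducing the vanishing of $\tG$-equivariant distributions to questions on smaller strata that can be handled by Frobenius reciprocity (Theorem~\ref{padic-Frob}) and the homogeneity/Fourier argument (Proposition~\ref{padic-OrbitCheck}). First I would stratify $V \times V^*$ according to the vanishing of $v$, of $\phi$, and of the pairing $\langle \phi, v\rangle$; this gives finitely many $G$-invariant (and, after checking, $\tG$-invariant) locally closed pieces. By Proposition~\ref{padic-Strat} it suffices to show $\Sc^*(X_i)^{\tG,\chi}=0$ for each stratum $X_i$ of $X$, where $X_i = \mathrm{gl}_n(F)\times (\text{stratum of } V\times V^*)$. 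Since $\tG$ acts separately on $\mathrm{gl}_n$ and on $V\times V^*$, and the $\mathrm{gl}_n$ factor carries the non-degenerate form $(A,B)\mapsto \tr(AB)$ which is preserved, I expect to be able to bring Proposition~\ref{padic-OrbitCheck} to bear on the $\mathrm{gl}_n$ direction once the $V\times V^*$ direction has been dealt with.

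The key steps, in order, are: (i) show that on the open stratum where $\langle\phi,v\rangle \neq 0$, the group $G$ acts on $V\times V^*$ with an open orbit (the stabilizer being conjugate to something like $\mathrm{GL}_{n-1}$ sitting inside, with a known modulus character), so by Frobenius reciprocity $\Sc^*(X)^{\tG,\chi}$ on this piece is identified with $\tH_{n-1}$-equivariant distributions on $\mathrm{gl}_n(F)$ (roughly $X_{n-1}$ together with extra coordinates), setting up an inductive descent on $n$; (ii) on the strata where $v=0$ or $\phi=0$ but the other is nonzero, again use transitivity of $G$ on the nonzero vectors of $V$ (resp.\ $V^*$) and Frobenius reciprocity to reduce to a parabolic stabilizer acting on $\mathrm{gl}_n(F)\times(\text{the other space})$, then filter further; (iii) on the closed stratum $v=\phi=0$, we are looking at $\Sc^*(\mathrm{gl}_n(F))^{\tG,\chi}$ with $G=\{(h_1,h_2): \det h_1=\det h_2\}$ acting by two-sided multiplication and $\sigma$ by transpose — here stratify $\mathrm{gl}_n$ by rank, use Frobenius reciprocity on each rank stratum to land on nilpotent-type data, and apply Proposition~\ref{padic-OrbitCheck} with a suitable set $\mathfrak{X}$ of characters avoiding $|\cdot|^{\dim}$ to kill the remaining pieces; (iv) carefully check at each step that the relevant stabilizers are unimodular (so Frobenius reciprocity applies) and that the character $\chi$ restricts nontrivially, which is what forces the distributions to vanish rather than merely to be finite-dimensional.

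The base case $n=0$ (or $n=1$) is immediate, and the whole argument is an induction on $n$ feeding Theorem~\ref{padic-main} for $n-1$ into step (i). The main obstacle, I expect, is step (iii): controlling $\tilde G$-equivariant distributions on $\mathrm{gl}_n(F)$ supported on the various rank strata and, in particular, verifying the homogeneity hypothesis of Proposition~\ref{padic-OrbitCheck} — i.e.\ producing the cone stratification of (the nilpotent part of) $\mathrm{gl}_n$ and showing that equivariant distributions on each cone are homogeneous of a type lying in an admissible set $\mathfrak{X}$. This is exactly the point where the sign character $\chi$ must be used decisively: without it the space of invariant distributions is genuinely nonzero (it contains, e.g., the $\sigma$-symmetrizations), so the proof must exploit that an equivariant distribution on a stratum, pushed to the relevant orbit, is killed by $\sigma$ acting with the wrong sign. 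Organizing these normal-space computations and the bookkeeping of modulus characters is the technical heart; everything else is the routine machinery of Propositions~\ref{padic-Strat}, \ref{padic-Product}, \ref{padic-OrbitCheck} and Theorem~\ref{padic-Frob}.
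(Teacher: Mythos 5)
Your plan transplants the Gelfand--Kazhdan/AGRS descent for the \emph{conjugation} action of $\mathrm{GL}_n$ on $\mathrm{gl}_n\times V\times V^*$ into a setting where the group is different, and this breaks the argument at the very first step. In notation \ref{NotObjects} the group $G=\{(h_1,h_2):\det h_1=\det h_2\}$ acts by $(A,v,\phi)\mapsto(h_1Ah_2^{-1},h_1v,(h_2^{-1})^t\phi)$, so the pairing transforms as $\langle\phi,v\rangle\mapsto\langle\phi,h_2^{-1}h_1v\rangle$; it is an invariant only of the diagonal subgroup $H$, not of $G$. Hence your proposed stratification of $V\times V^*$ by the vanishing of $\langle\phi,v\rangle$ is not $G$-invariant and Proposition \ref{padic-Strat} does not apply to it. For the same reason the descent in your step (i) fails as stated: for $n\geq 2$ the set $(V\setminus 0)\times(V^*\setminus 0)$ is a single $G$-orbit whose stabilizer is a product of two mirabolic-type groups, not a copy of $\mathrm{GL}_{n-1}$, so Frobenius reciprocity does not land you on $X_{n-1}$ and no induction on $n$ of the kind you describe gets off the ground. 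Likewise, in your step (iii) the rank strata of $\mathrm{gl}_n$ are single orbits for the two-sided action, so there is no ``nilpotent-type data'' to analyze; that picture again belongs to the conjugation action.

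The paper's proof goes the other way around: it stratifies $X$ only in the $\mathrm{gl}_n$ direction, by the rank $i$ of $A$ (the strata $Z^i=Q^i\times W$ with $W=V\oplus V^*$), applies Frobenius reciprocity over each $Q^i$ (whose stabilizers contain $H_i\times G_{n-i}$ and give $W\cong W_i\times W_{n-i}$), and reduces everything via Proposition \ref{padic-Product} to a single Key Lemma, $\Sc^*(W)^{\tH,\chi}=0$. That lemma is where the cone/homogeneity/Fourier machinery (Proposition \ref{padic-OrbitCheck}) is actually used, and it is itself reduced to $n=1$ by restricting to the diagonal torus; no induction on $n$ and no analysis of invariants of $(A,v,\phi)$ is needed. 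If you want to salvage your approach, you must first replace your stratification of $V\times V^*$ by one that is genuinely $\tG$-invariant (e.g.\ by the vanishing of $v$ and of $\phi$ separately), and then confront the mirabolic-type stabilizer on the open piece -- at which point you are doing a different, and substantially harder, argument than the one the paper gives.
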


\subsubsection{Proof that theorem \ref{padic-main} implies theorem
A} \label{SecReduction}

$ $

We will divide this reduction to several propositions.\\
Consider the action of $\tG_n$ on $\mathrm{GL}_{n+1}$ and on
$\mathrm{gl}_{n+1}$, where $G_n$ acts by the two-sided action and
the generator of $S_2$ acts by transposition.

\begin{proposition} \label{padic-Red1}
If  $\Sc^*(\mathrm{GL}_{n+1})^{\tG_n,\chi}=0$ then theorem A
holds.
\end{proposition}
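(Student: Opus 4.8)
The plan is to reduce the statement on $\mathrm{GL}_{n+1}$ to the statement on $\mathrm{GL}_{n+1}(F)$-invariant distributions under the two-sided $G_n$-action being transposition-invariant, which is exactly the content of Theorem A, and then to observe that the $\widetilde{G}_n$-picture is just a repackaging of that. First I would recall that a $G_n \times G_n$-invariant distribution $T$ on $\mathrm{GL}_{n+1}$ that is \emph{not} transposition-invariant would give, via $T - \sigma(T)$ (or rather via the projector onto the $\sgn$-isotypic component), a nonzero distribution on which $G_n$ acts trivially and $S_2$ acts by the sign character $\chi$; conversely such a distribution is precisely a nonzero element of $\Sc^*(\mathrm{GL}_{n+1})^{\widetilde{G}_n,\chi}$. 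So the vanishing $\Sc^*(\mathrm{GL}_{n+1})^{\widetilde{G}_n,\chi}=0$ is equivalent to: every $G_n$-invariant distribution on $\mathrm{GL}_{n+1}$ is $\sigma$-invariant.

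The remaining point is that this matches Theorem A, which speaks of $\mathrm{GL}_n(F)\times\mathrm{GL}_n(F)$-invariance rather than $G_n$-invariance (recall $G_n$ is the index-subgroup cut out by $\det h_1 = \det h_2$). Here I would use that the extra torus $\{(t,1)\}\subset \mathrm{GL}_n\times\mathrm{GL}_n$ acts on $\mathrm{GL}_{n+1}$, and that the center of $\mathrm{GL}_{n+1}$ together with $G_n$ already generates the full two-sided $\mathrm{GL}_n\times\mathrm{GL}_n$-action up to scalars: more precisely, $\mathrm{GL}_n \times \mathrm{GL}_n = G_n \cdot (Z_{n+1}\cap (\mathrm{GL}_n\times\mathrm{GL}_n)\text{-image})$, so a $G_n$-invariant distribution that is automatically invariant under the center of $\mathrm{GL}_{n+1}$ by homogeneity considerations is in fact $\mathrm{GL}_n\times\mathrm{GL}_n$-invariant. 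Since transposition $\sigma$ intertwines the $\mathrm{GL}_n\times\mathrm{GL}_n$-action with itself (swapping the two factors), $\sigma$-invariance of all $G_n$-invariant distributions is then the same as $\sigma$-invariance of all $\mathrm{GL}_n\times\mathrm{GL}_n$-invariant distributions, which is Theorem A. Running this equivalence backwards: assuming $\Sc^*(\mathrm{GL}_{n+1})^{\widetilde{G}_n,\chi}=0$, every $G_n$-invariant distribution is $\sigma$-invariant, hence (by the above) every $\mathrm{GL}_n\times\mathrm{GL}_n$-invariant distribution is $\sigma$-invariant, which is Theorem A.

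The step I expect to be the main obstacle is the passage between $G_n$-invariance and the full $\mathrm{GL}_n(F)\times\mathrm{GL}_n(F)$-invariance. One has to be careful that the ``missing'' one-parameter direction acts through scaling by the center of $\mathrm{GL}_{n+1}(F)$, and argue that this causes no loss; concretely I would phrase it as: any $G_n$-invariant distribution on $\mathrm{GL}_{n+1}$ decomposes (by Mellin/Fourier analysis in the central variable, or simply by noting that the relevant quotient group $(\mathrm{GL}_n\times\mathrm{GL}_n)/G_n \cong F^\times$ acts and one can project to the trivial-character component) — but in fact the cleanest route, and the one I would adopt, is to note that it suffices to prove Theorem A, and Theorem A as literally stated only asserts transposition-invariance, which is stable under multiplying the distribution by the extra $F^\times$-action; thus proving the $\widetilde{G}_n$-statement and then simply restricting attention to $G_n$-invariant (equivalently $\mathrm{GL}_n\times\mathrm{GL}_n$-invariant, after noting transposition commutes with the $F^\times$ action) distributions gives Theorem A directly. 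I would therefore present the proposition's proof as: take a $\mathrm{GL}_n\times\mathrm{GL}_n$-invariant distribution $T$ on $\mathrm{GL}_{n+1}$; then $T$ is in particular $G_n$-invariant; the distribution $\xi := T - \sigma T$ is $G_n$-invariant and $\chi$-equivariant under $\widetilde{G}_n$, hence zero by hypothesis; therefore $\sigma T = T$, which is Theorem A.
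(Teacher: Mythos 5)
Your final paragraph is exactly the intended argument (which the paper dismisses as straightforward): given a $\mathrm{GL}_n\times\mathrm{GL}_n$-invariant distribution $T$, the symmetrization $\xi := T - \sigma T$ is $G_n$-invariant and satisfies $\sigma\xi=-\xi$, i.e.\ $\xi\in\Sc^*(\mathrm{GL}_{n+1})^{\tG_n,\chi}$, hence vanishes by hypothesis, so $T$ is transposition-invariant. The lengthy middle discussion about upgrading $G_n$-invariance to full $\mathrm{GL}_n\times\mathrm{GL}_n$-invariance (via the center, Mellin analysis, etc.) is superfluous for this direction of the implication, since the hypothesis concerns the larger class of merely $G_n$-invariant, $\sigma$-anti-invariant distributions and therefore applies a fortiori to those coming from $\mathrm{GL}_n\times\mathrm{GL}_n$-invariant ones.
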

The proof is straightforward.
\begin{proposition} \label{padic-Red2}
If  $\Sc^*(\mathrm{gl}_{n+1})^{\tG_n,\chi}=0$ then
$\Sc^*(\mathrm{GL}_{n+1})^{\tG_n,\chi}=0$.
\end{proposition}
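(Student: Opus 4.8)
The plan is to view $\mathrm{GL}_{n+1}$ as the open complement of the singular locus inside the vector space $V:=\mathrm{gl}_{n+1}$ and to push the hypothesis across that locus. Put $Z:=\{A\in V\mid\det A=0\}$. The determinant is $\tG_n$-invariant — on $G_n$ because $\det h_1=\det h_2$, and under the generator $\sigma$ of $S_2$ because $\det(A^t)=\det(A)$ — so $Z$ is a closed $\tG_n$-invariant cone and $\mathrm{GL}_{n+1}=V\setminus Z$ is open and $\tG_n$-invariant. For $l$-spaces the extension-by-zero sequence $0\to\Sc(\mathrm{GL}_{n+1})\to\Sc(V)\to\Sc(Z)\to 0$ is exact; dualizing, the restriction map $\Sc^*(V)\to\Sc^*(\mathrm{GL}_{n+1})$ is surjective, with kernel the distributions on $V$ supported on $Z$, i.e. $\Sc^*(Z)$. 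Passing to $(\tG_n,\chi)$-invariants gives the exact sequence
\[
0\to\Sc^*(Z)^{\tG_n,\chi}\to\Sc^*(V)^{\tG_n,\chi}\to\Sc^*(\mathrm{GL}_{n+1})^{\tG_n,\chi}\to\cdots .
\]
Thus, granting $\Sc^*(V)^{\tG_n,\chi}=0$, it suffices to show that every $\xi\in\Sc^*(\mathrm{GL}_{n+1})^{\tG_n,\chi}$ is the restriction of a $\tG_n$-equivariant distribution on $V$: that distribution then vanishes by hypothesis, forcing $\xi=0$. Equivalently, after choosing any lift $\widetilde\xi_0\in\Sc^*(V)$ of $\xi$ one must kill the obstruction cocycle $g\mapsto\chi(g)^{-1}\,g\widetilde\xi_0-\widetilde\xi_0$, which (since $\xi$ is already equivariant on $\mathrm{GL}_{n+1}$) takes values in the submodule $\Sc^*(Z)$.

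To do this I would bring in the homothety action of $F^\times$ on $V$, which commutes with $\tG_n$ and preserves the decomposition $V=\mathrm{GL}_{n+1}\sqcup Z$. Decomposing along it (as in the proof of Proposition \ref{padic-OrbitCheck}, via the Fourier transform on $V$ and the homogeneity it produces) reduces the question to distributions homogeneous of a fixed type $\alpha$, i.e. to $(\tG_n\times F^\times)$-equivariant distributions for $\chi$ twisted by $\alpha$. One then controls $\Sc^*(Z)$ in this twisted sense by stratifying $Z$ by the rank of the matrix — finitely many $\tG_n$-invariant strata, since the two-sided action preserves rank — applying Proposition \ref{padic-Strat}, and using Frobenius reciprocity (Theorem \ref{padic-Frob}) on each stratum: fibering a rank-$k$ stratum over its $\tG_n$-invariants identifies the relevant distributions with equivariant distributions on spaces of the same shape as $X_m$ with $m<n$, handled by the same circle of ideas, inductively on $n$. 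Two features make this run. First, any stratum whose generic stabilizer in $\tG_n$ already contains an element inducing the transposition carries no $\chi$-equivariant distribution at all, because the required identity would force $\mathrm{sign}=-1$ to equal a value of the strictly positive modulus character. Second, on the remaining strata the homogeneity dichotomy of Proposition \ref{padic-OrbitCheck} shows that for all but finitely many types $\alpha$ the contribution vanishes; hence for such $\alpha$ the homogeneous extension of $\xi$ to $V$ exists and is unique, and uniqueness makes it automatically $\tG_n$-equivariant, so the hypothesis kills it. The finitely many exceptional types are disposed of by a density argument in the character variable.

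The step I expect to be the genuine obstacle is precisely this analysis on the singular cone $Z$. Unlike the $l$-space formalities, the passage from vanishing on $\mathrm{gl}_{n+1}$ to vanishing on the open piece $\mathrm{GL}_{n+1}$ is not automatic — the restriction map need not be surjective on equivariant vectors — so one must genuinely control the obstruction living on $Z$. Concretely this means understanding, for the lower-rank strata (equivalently, for the spaces $X_m$ with $m<n$, after the $\tG_n$-equivariant identification $\mathrm{gl}_{n+1}\cong X_n\times F$ recording the bottom-right entry as the $F$-coordinate, and its analogues), which orbits are preserved by the transposition and which homogeneity types can occur; the non-transposition-stable strata are the hard ones, and they are exactly what the homogeneity and Fourier-transform apparatus of section \ref{p-adic} is designed to handle. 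Everything else here — the $l$-space exact sequences, the reduction to homogeneous distributions, and Frobenius reciprocity on individual strata — is routine given those tools.
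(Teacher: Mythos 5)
You have correctly identified the abstract difficulty (taking $(\tG_n,\chi)$-equivariants does not preserve the surjectivity of $\Sc^*(\mathrm{gl}_{n+1})\to\Sc^*(\mathrm{GL}_{n+1})$), but the route you propose to overcome it is both incomplete and far heavier than needed, and as written it has genuine gaps. The paper's actual proof is a two-line localization: if $\xi\in\Sc^*(\mathrm{GL}_{n+1})^{\tG_n,\chi}$ were nonzero, pick $p\in\Supp(\xi)$, set $t=\det(p)\neq 0$, and choose $f\in\Sc(F)$ vanishing near $0$ with $f(t)\neq 0$. Since $\det$ is $\tG_n$-invariant (as you yourself observe), $\xi':=(f\circ\det)\cdot\xi$ is still $(\tG_n,\chi)$-equivariant, contains $p$ in its support, and is supported in $\det^{-1}(\Supp f)$, which is \emph{closed in} $\mathrm{gl}_{n+1}$ and disjoint from the singular locus. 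For $l$-spaces such a distribution extends by zero to an equivariant element of $\Sc^*(\mathrm{gl}_{n+1})^{\tG_n,\chi}$, which vanishes by hypothesis; contradiction. The whole obstruction on $Z=\{\det=0\}$ is bypassed because one never tries to extend $\xi$ itself, only a truncation of it whose support stays away from $Z$.

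Your alternative is not salvageable as stated. First, "decomposing along" the homothety action into homogeneous components is not an available operation: a distribution is not a direct sum of homogeneous pieces, and Proposition \ref{padic-OrbitCheck} works with Jordan--H\"older content to prove \emph{vanishing}, not to produce a decomposition you can extend term by term. Second, the claim that a unique homogeneous extension is automatically $\tG_n$-equivariant needs an argument, and the "density argument in the character variable" for the exceptional types is unspecified. Third, the inductive analysis of the rank strata of $Z$ that you defer to is essentially the content of Theorem \ref{padic-main} itself, so your reduction step would end up re-proving the main theorem. The missing idea is simply that the invariant function $\det$ lets you localize away from $Z$ before extending.
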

\emph{Proof.}\footnote{This proposition is an adaption of a
statement in \cite{Ber}, section 2.2.} Let $\xi \in
\Sc^*(\mathrm{GL}_{n+1})^{\tG_n,\chi}$. We have to prove $\xi=0$.
Assume the contrary. Take $p \in \mathrm{Supp}(\xi)$. Let
$t=\mathrm{det}(p)$. Let $f\in \Sc(F)$ be such that $f$ vanishes
in a neighborhood of zero and $f(t) \neq 0$. Consider the
determinant map $\mathrm{det}:\mathrm{GL}_{n+1} \to F$. Consider
$\xi':=(f \circ \mathrm{det})\cdot \xi$. It is easy to check that
$\xi' \in \Sc^*(\mathrm{GL}_{n+1})^{\tG_n,\chi}$ and $p \in
\mathrm{Supp}(\xi')$. However, we can extend $\xi'$ by zero to
$\xi'' \in \Sc^*(\mathrm{gl}_{n+1})^{\tG_n,\chi}$, which is zero
by the assumption. Hence $\xi'$ is also zero. Contradiction.
\proofend
\begin{proposition} \label{padic-Red4}
If $\Sc^*(X_{n})^{\tG_n,\chi}=0$ then
$\Sc^*(\mathrm{gl}_{n+1})^{\tG_n,\chi}=0$.
\end{proposition}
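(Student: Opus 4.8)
The idea is to relate $\mathrm{gl}_{n+1}$ to $X_n = \mathrm{gl}_n \times V \times V^*$ by writing an $(n+1)\times(n+1)$ matrix in block form relative to the decomposition $F^{n+1} = F^n \oplus F$. First I would decompose
$$
\mathrm{gl}_{n+1} \;=\; \Bigl\{ \begin{pmatrix} A & v \\ \phi & a \end{pmatrix} : A \in \mathrm{gl}_n,\ v \in V,\ \phi \in V^*,\ a \in F \Bigr\},
$$
so that $\mathrm{gl}_{n+1} \cong X_n \times F$, where the last coordinate $a$ is the bottom-right entry. I would check directly that under the two-sided action of $G_n$ (and transposition) the entry $a$ is fixed, while the $(A,v,\phi)$-part transforms exactly according to the $\tG_n$-action on $X_n$ defined in Notation \ref{NotObjects}: indeed for $(h_1,h_2) \in G_n$ one has $\begin{pmatrix} h_1 & 0 \\ 0 & 1\end{pmatrix}\begin{pmatrix} A & v \\ \phi & a\end{pmatrix}\begin{pmatrix} h_2 & 0 \\ 0 & 1\end{pmatrix}^{-1} = \begin{pmatrix} h_1 A h_2^{-1} & h_1 v \\ {h_2^{-1}}^t\phi \cdot(\text{scalar}) & a\end{pmatrix}$, and since $\det h_1 = \det h_2$ the scalar issues are harmless; transposition clearly sends $(A,v,\phi,a)$ to $(A^t,\phi^t,v^t,a)$.

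Given this identification $\mathrm{gl}_{n+1} \cong X_n \times F$ with $\tG_n$ acting trivially on the $F$-factor, I would invoke Proposition \ref{padic-Product} (or rather its underlying principle): a distribution on a product $Y \times F$ invariant under a group acting only on $Y$ is, locally in the $F$-variable, a distribution on $Y$. More precisely, if $\Sc^*(X_n)^{\tG_n,\chi} = 0$ then for every $f \in \Sc(F)$ the partial evaluation $\xi_f(\cdot) := \xi(\cdot \otimes f)$ lies in $\Sc^*(X_n)^{\tG_n,\chi}$, hence vanishes; since this holds for all $f$ and $\Sc(X_n \times F) = \Sc(X_n) \otimes \Sc(F)$ is dense appropriately (this is the standard Schwartz-function tensor-product argument valid for $\ell$-spaces), we conclude $\xi = 0$. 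This gives $\Sc^*(\mathrm{gl}_{n+1})^{\tG_n,\chi} = 0$.

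**Main obstacle.** The only genuinely delicate point is the bookkeeping with the determinant condition defining $G_n$: one must be careful that the conjugation action on the off-diagonal blocks $v, \phi$ matches the action on $V, V^*$ in Notation \ref{NotObjects} on the nose, and that the character $\chi$ (depending only on the $S_2$-part) is respected — but since $\chi(g,s) = \sgn(s)$ ignores $G_n$ entirely, this is automatic. The other routine-but-necessary ingredient is the tensor-product / partial-evaluation argument for distributions on a product of $\ell$-spaces; this is entirely standard in the $\ell$-space setting (unlike the archimedean case treated later in the paper), so no new tools are needed here. I expect the whole proof to occupy only a few lines once the block decomposition is written down.
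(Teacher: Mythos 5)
Your proposal is correct and matches the paper's proof exactly: both use the block decomposition $\mathrm{gl}_{n+1} \cong X_n \times F$ (with $a$ the bottom-right entry), observe that $\tG_n$ acts trivially on the $F$-factor, and conclude by Proposition~\ref{padic-Red4}'s prerequisite Proposition~\ref{padic-Product}. (One small remark: there is actually no ``scalar'' factor in the lower-left block --- the conjugation by $\diag(h_1,1)$ and $\diag(h_2,1)$ sends $\phi$ to $\phi h_2^{-1} = {h_2^{-1}}^t\phi$ on the nose, and the determinant condition on $G_n$ plays no role here.)
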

\begin{proof}
Note that $\mathrm{gl}_{n+1}$ is isomorphic as a
$\tG_n$-equivariant $l$-space to $X_n \times F$ where the action
on $F$ is trivial. This isomorphism is given by
$$ \left(
  \begin{array}{cc}
    A_{n \times n} & v_{n\times 1} \\
    \phi_{1\times n} & t\\
  \end{array}
\right) \mapsto ((A , v ,\phi),t) .$$ The proposition now follows
from proposition \ref{padic-Product}.
\end{proof}
This finishes the proof that theorem \ref{padic-main} implies
Theorem A.
\subsubsection{Proof of theorem \ref{padic-main}}

$ $

We will now stratify $X (=gl_n \times V \times V^*)$ and deal with
each strata separately.
\begin{notation}
Denote $W:=W_n:=V_n \oplus V_n^*.$ Denote by $Q^i:=Q^i_n \subset
\mathrm{gl}_n$ the set of all matrices of rank $i$. Denote
$Z^i:=Z^i_n:=Q^i_n \times W_n$.
\end{notation}
Note that $X = \bigcup Z^i$. Hence by proposition
\ref{padic-Strat}, it is enough to prove the following
proposition.

\begin{proposition} \label{padic-Enough}
$\Sc^*(Z^i)^{\tG,\chi}=0$ for any $i=0,1,...,n$.
\end{proposition}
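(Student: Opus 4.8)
The plan is to fix $i$ and analyze $\Sc^*(Z^i)^{\tG,\chi}$ by splitting off the $\mathrm{gl}_n$-direction from the $W = V \oplus V^*$-direction. Since $Q^i$ (matrices of rank exactly $i$) is a single orbit under the two-sided $\mathrm{GL}_n \times \mathrm{GL}_n$-action — hence under $G$ — and the transposition $\sigma$ preserves it, $Z^i = Q^i \times W$ is a $\tG$-equivariant fibration over the homogeneous space $Q^i$. So the first step is to apply Frobenius reciprocity (Theorem \ref{padic-Frob}): pick a convenient base point $A_0 \in Q^i$, say the projection onto the first $i$ coordinates, identify $\Sc^*(Z^i)^{\tG,\chi}$ with $\Sc^*(W)^{\tG_{A_0},\chi}$ where $\tG_{A_0}$ is the stabilizer of $A_0$ in $\tG$, and check the unimodularity hypotheses (the groups involved are reductive-type, so this is routine). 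One must be a little careful that $\sigma$ does not fix $A_0$ but rather conjugates the $G$-stabilizer; either choose $A_0$ symmetric so $\sigma \in \tG_{A_0}$, or track the twisted action — I would take $A_0$ to be the standard rank-$i$ idempotent $\mathrm{diag}(I_i,0)$, which is symmetric, so $\sigma$ genuinely normalizes the stabilizer.

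The second step is to compute the stabilizer $S := \tG_{A_0}$ and its action on $W = V \oplus V^*$. Writing $V = F^i \oplus F^{n-i}$ accordingly, the $G$-part of the stabilizer consists of block matrices preserving the decomposition in a suitable sense; concretely $(h_1,h_2)$ with $h_1 A_0 h_2^{-1} = A_0$, $\det h_1 = \det h_2$. The action on $v \in V$ and $\phi \in V^*$ is the obvious linear one. Now apply Proposition \ref{padic-OrbitCheck}: $W$ carries the natural $S$-invariant pairing between $V$ and $V^*$ (nondegenerate, dimension $2n$, even), and $W$ stratifies into $S$-invariant cones according to whether $v=0$ or not and whether $\phi = 0$ or not and whether $\langle v,\phi\rangle = 0$ or not — the usual stratification used in this circle of ideas. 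On each such cone one needs that every element of $\Sc^*(\cdot)^{S,\chi}$ is homogeneous of type $\alpha$ with $\alpha$ in a set $\mathfrak{X}$ avoiding $|\cdot|^{2n}$; the point is that the $\mathbb{G}_m$ acting by scaling $v$ (or $\phi$, or both) sits inside $S$, and by Frobenius/orbit analysis on each cone the distributions are forced to transform by a fixed character under these scalings, and the characters that arise are powers of $|\cdot|$ strictly bounded away from the ``critical'' exponent $2n$ because the sign twist $\chi$ and the structure of the stabilizer obstruct the middle one.

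The main obstacle I anticipate is exactly the bookkeeping of which homogeneity types actually occur on the boundary cones where $v$ and $\phi$ are both nonzero but $\langle v,\phi\rangle$ may or may not vanish — this is where one genuinely uses the sign character $\chi$ and the transposition symmetry, and where a naive count of homogeneity degrees would include the forbidden value, so the argument must exploit that $\sigma$ swaps $v \leftrightarrow \phi^t$ to rule it out. A secondary subtlety is the base case $i=0$: then $Q^0 = \{0\}$ is a point, $Z^0 = W$, and the statement reduces directly to Proposition \ref{padic-OrbitCheck} applied to $W$ with the full group $\tG_n$ acting, via the same cone stratification of $W$; I would dispatch this first as a warm-up since it already exhibits the mechanism in miniature, and then the general $i$ follows the same template after the Frobenius reduction. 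Throughout, one should double-check that the relevant stabilizers are unimodular so that Frobenius reciprocity applies cleanly — for the groups here (products of general linear groups intersected with a determinant condition, and their semidirect products with $S_2$) this holds.
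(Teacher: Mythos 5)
Your first step — Frobenius reciprocity with base point $A_i=\mathrm{diag}(I_i,0)$, reducing to $\Sc^*(W)^{\tG_{A_i},\chi}$ — matches the paper exactly, and your remark that $\sigma$ normalizes the stabilizer because $A_i$ is symmetric is correct. The second step, however, contains a genuine error. You assert that $W=V\oplus V^*$ carries a nondegenerate $\tG_{A_i}$-invariant bilinear pairing and then plan to apply Proposition~\ref{padic-OrbitCheck} to the full stabilizer acting on all of $W$. For $i<n$ (in particular $i=0$, where $\tG_{A_0}=\tG$) this is false: the action of $(h_1,h_2)\in G$ on $W$ is $(v,\phi)\mapsto(h_1v,{h_2^{-1}}^t\phi)$, so $\langle h_1v,{h_2^{-1}}^t\phi\rangle=\langle v,(h_2^{-1}h_1)^t\phi\rangle$, and the pairing is only preserved when $h_1=h_2$, i.e.\ on the diagonal $H$. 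As $G$-representations $V$ and $V^*$ are standard representations of the two independent $\mathrm{GL}_n$ factors and admit no $G$-invariant pairing at all, so the hypothesis of Proposition~\ref{padic-OrbitCheck} simply fails for $S=\tG_{A_i}$ when $i<n$. Even for $i=n$ (where $\tG_{A_n}=\tH_n$ does preserve the pairing) your direct cone analysis on $W_n$ is more delicate than you suggest: for $n\geq 2$ the open cone $\{\langle v,\phi\rangle\neq 0\}$ is foliated by the level sets of $\langle v,\phi\rangle$ and carries an infinite-dimensional space of $H$-invariant distributions, so homogeneity or vanishing does not fall out of a naive orbit count.

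The paper avoids both problems with an extra layer of reduction that you are missing. The Key Lemma ($\Sc^*(W_n)^{\tH_n,\chi}=0$) is proved by first restricting to the diagonal torus $T_n$ and then using the product structure (Proposition~\ref{padic-Product}) to reduce to the $n=1$ case $W_1=F^2$, where Proposition~\ref{padic-OrbitCheck} is applied with the cross stratification — that is the \emph{only} place OrbitCheck is used. For $i<n$ the paper does not try to stratify $W$ under the stabilizer at all: it observes that $H_i\times G_{n-i}$ embeds in $G_{A_i}$ with $W\cong W_i\times W_{n-i}$, derives from the Key Lemma and Proposition~\ref{padic-Product} that $H_i\times H_{n-i}$-invariance already forces $\tH_i\times \tH_{n-i}$-invariance, and concludes that any $G_{A_i}$-invariant $\xi$ is $\sigma$-invariant, hence $\tG_{A_i}$-invariant; combined with the sign constraint $\chi(\sigma)=-1$ this kills $\xi$. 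You should replace your planned application of OrbitCheck to $\tG_{A_i}$ on $W$ by this algebraic ``upgrade invariance from $H$ to $\tH$'' argument, which sidesteps the lack of an invariant form entirely.
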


We will use the following key lemma.
\begin{lemma} [Non-archimedean Key Lemma] \label{padic-key}
$\Sc^*(W)^{\tH,\chi}=0.$
\end{lemma}
For proof see section \ref{padic-ProofKey} below.
\begin{corollary} \label{padic-in}
Proposition \ref{padic-Enough} holds for $i=n$.
\end{corollary}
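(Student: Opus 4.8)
The plan is to deduce this from the Non-archimedean Key Lemma \ref{padic-key} by Frobenius reciprocity. Note first that $Q^n_n$ is the set of invertible $n\times n$ matrices, i.e.\ $Q^n_n=\mathrm{GL}_n(F)$, so that $Z^n=Q^n_n\times W$ is $\tG$-equivariantly isomorphic to $\mathrm{GL}_n\times W$, where $\tG$ acts on the first factor through the two-sided action of $G$ together with transposition by $\sigma$, and on $W=V\oplus V^*$ as in Notation \ref{NotObjects}.

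The projection $\pr\colon Z^n\to\mathrm{GL}_n$ is $\tG$-equivariant; however $\tG$ does \emph{not} act transitively on $\mathrm{GL}_n$, since $\det h_1=\det h_2$ for $(h_1,h_2)\in G$ and $\det(A^t)=\det A$, so the determinant is a continuous $\tG$-invariant and the $\tG$-orbits on $\mathrm{GL}_n$ are the fibres $\det^{-1}(c)$, $c\in F^\times$. I would therefore first apply Bernstein's localization principle to the $\tG$-invariant map $q:=\det\circ\pr\colon Z^n\to F^\times$ (on which $\tG$ acts trivially): it reduces the claim $\Sc^*(Z^n)^{\tG,\chi}=0$ to showing $\Sc^*\bigl(q^{-1}(c)\bigr)^{\tG,\chi}=0$ for every $c\in F^\times$, where $q^{-1}(c)=\det^{-1}(c)\times W$.

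Fix $c\in F^\times$ and a symmetric matrix $A_0$ with $\det A_0=c$ (for instance $A_0=\diag(c,1,\dots,1)$). Then $G$ acts transitively on $\det^{-1}(c)$ with $\stab_G(A_0)=\{(A_0hA_0^{-1},h):h\in\mathrm{GL}_n\}\cong\mathrm{GL}_n$ (the determinant condition being automatic), and $\sigma(A_0)=A_0^t=A_0$, so $\stab_{\tG}(A_0)=\stab_G(A_0)\rtimes S_2\cong\mathrm{GL}_n\rtimes S_2$. Both $\tG$ and $\stab_{\tG}(A_0)$ are finite extensions of reductive groups, hence unimodular, so Theorem \ref{padic-Frob} applied to the $\tG$-equivariant projection $q^{-1}(c)\to\det^{-1}(c)$ yields a canonical isomorphism
$$\Sc^*\bigl(q^{-1}(c)\bigr)^{\tG,\chi}\;\cong\;\Sc^*(W)^{\stab_{\tG}(A_0),\chi},$$
where $\stab_{\tG}(A_0)$ acts on the fibre $W$ over $A_0$. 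A change of variables on $W$ (conjugating the $V$-coordinate by $A_0$), together with the resulting identification of $\stab_{\tG}(A_0)$ with $\tH$, matches this action with the standard action of $\tH=H\rtimes S_2$ on $W$, and $\chi$ restricts to the sign character on $S_2$ throughout; hence $\Sc^*(W)^{\stab_{\tG}(A_0),\chi}\cong\Sc^*(W)^{\tH,\chi}$, which is $0$ by Lemma \ref{padic-key}. Thus $\Sc^*(q^{-1}(c))^{\tG,\chi}=0$ for every $c$, and the localization principle gives the corollary.

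I expect the two points requiring real care to be: (i) the reduction over $F^\times$ — one cannot simply restrict to a single value of the determinant, since points of $F^\times$ are not open in the $l$-space topology, so the full localization principle is needed rather than an elementary cut-off argument; and (ii) the last identification, namely that $\stab_{\tG}(A_0)$ with its action on the fibre $W$ is equivalent, as a group acting on a space, to the pair $(\tH,W)$ of the Key Lemma — a short but slightly delicate conjugation computation, which is the reason for choosing $A_0$ symmetric so that $\sigma$ fixes it.
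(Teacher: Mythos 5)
Your argument is correct, but it takes a genuinely different route from the paper's. The paper does not localize over the determinant: instead it first enlarges the group to $\widetilde{GL_n\times GL_n}=(GL_n\times GL_n)\rtimes S_2$, for which $Q^n=GL_n$ \emph{is} a single orbit with stabilizer of $\mathrm{Id}$ equal to $\tH$, so one application of Frobenius reciprocity plus the Key Lemma gives $\Sc^*(Z^n)^{\widetilde{GL_n\times GL_n},\chi}=0$. It then descends from the big group to $\tG$ by an auxiliary-variable trick: it forms $Y=Z^n\times F^\times$ with $(h_1,h_2)$ acting on the second factor by $\det h_1\det h_2^{-1}$, applies Frobenius to the (transitive) projection $Y\to F^\times$ to identify $\Sc^*(Y)^{\widetilde{GL_n\times GL_n},\chi}\cong\Sc^*(Z^n)^{\tG,\chi}$, and kills $\Sc^*(Y)$ by untwisting the $F^\times$-coordinate (via $\lambda\mapsto\lambda\det A^{-1}$) and invoking Proposition \ref{padic-Product}. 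Your version replaces this second step by Bernstein's localization principle over $q=\det\circ\pr$, followed by Frobenius on each fiber $\det^{-1}(c)$ with the base point $A_0$ and the conjugation you describe (which does work out: conjugating by $(A_0,\mathrm{Id})$ carries $\stab_{\tG}(A_0)$ onto $\tH$, with $\sigma$ going to $(A_0^{-1},A_0^{-1})\sigma\in\tH$). The trade-off: your proof is more direct and avoids the auxiliary space $Y$, but it imports the localization principle, which is valid for $\ell$-spaces (it is in \cite{Ber}, \S 1.4, alongside the Frobenius reciprocity the paper does quote) yet is deliberately absent from the paper's toolbox --- the authors reuse this exact proof verbatim in the archimedean case, where localization over the non-discrete base $F^\times$ is not available in the same elementary form, whereas their Frobenius-plus-untwisting argument transfers as is.
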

\begin{proof}
Clearly, one can extend the actions of $\widetilde{G}$ on $Q^n$
and on $Z^n$ to actions of $\widetilde{GL_n \times GL_n}:= (GL_n
\times GL_n) \rtimes S_2$ in the obvious way.

Step 1. $\Sc^*(Z^n)^{\widetilde{GL_n \times GL_n},\chi}=0$.\\
Consider the projection on the first coordinate from $Z^n$ to the
transitive $\widetilde{GL_n \times GL_n}$-space  $Q^n=GL_n$.
Choose the point $Id \in Q^n$. Its stabilizer is $\tH$ and its
fiber is $W$. Hence by Frobenius reciprocity (theorem
\ref{padic-Frob}), $\Sc^*(Z^n)^{\widetilde{GL_n \times GL_n},\chi}
\cong  \Sc^*(W)^{\tH,\chi}$ which is zero by the key lemma.

Step 2. $\Sc^*(Z^n)^{\tG,\chi}=0$.\\
Consider the space $Y:=Z^n \times F^{\times}$ and let the group
$GL_n \times GL_n$ act on it by $(h_1,h_2) (z,\lambda):=
((h_1,h_2)z,\det h_1 \det h_2^{-1} \lambda)$. Extend this action
to action of $\widetilde{GL_n \times GL_n}$ by $\sigma(z,\lambda)
:= (\sigma(z),\lambda)$. Consider the projection $Z^n \times
F^{\times} \to F^{\times}$. By Frobenius reciprocity (theorem
\ref{padic-Frob}),
$$\Sc^*(Y)^{\widetilde{GL_n \times GL_n},\chi} \cong
\Sc^*(Z^n)^{\tG,\chi}.$$ Let $Y'$ be equal to $Y$ as an $l$-space
and let $\widetilde{GL_n \times GL_n}$ act on $Y'$ by $(h_1,h_2)
(z,\lambda):= ((h_1,h_2)z,\lambda)$ and
$\sigma(z,\lambda):=(\sigma(z),\lambda)$. Now $Y$ is isomorphic to
$Y'$ as a $\widetilde{GL_n \times GL_n}$ space by
$((A,v,\phi),\lambda) \mapsto ((A,v,\phi), \lambda \det A^{-1})$.

Since $\Sc^*(Z^n)^{\widetilde{GL_n \times GL_n},\chi} = 0$,
proposition \ref{padic-Product} implies that
$\Sc^*(Y')^{\widetilde{GL_n \times GL_n},\chi} = 0$ and hence
$\Sc^*(Y)^{\widetilde{GL_n \times GL_n},\chi} = 0$ and thus
$\Sc^*(Z^n)^{\tG_n,\chi}=0$.%
\end{proof}

\begin{corollary}
We have $$\Sc^*(W_i \times W_{n-i})^{H_i \times H_{n-i}} =
\Sc^*(W_i \times W_{n-i})^{\tH_i \times \tH_{n-i}}.$$
\end{corollary}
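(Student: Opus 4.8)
The plan is to deduce this corollary from the non-archimedean Key Lemma (Lemma \ref{padic-key}) together with Proposition \ref{padic-Product}. First I would observe that $W_i \times W_{n-i}$, with the action of $\tH_i \times \tH_{n-i}$, is the product of the two $\tH$-spaces $W_i$ and $W_{n-i}$, and that $H_i \times H_{n-i}$ is the product of the subgroups $H_i \subset \tH_i$ and $H_{n-i} \subset \tH_{n-i}$. Hence, by Proposition \ref{padic-Product}, it suffices to prove, for $k=i$ and $k=n-i$, the single-factor identity
$$\Sc^*(W_k)^{H_k} = \Sc^*(W_k)^{\tH_k}.$$

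To prove this identity, recall that $\tH_k = H_k \rtimes S_2$, so the generator $\sigma$ of $S_2$ normalizes $H_k$ and therefore acts on the complex vector space $\Sc^*(W_k)^{H_k}$; since $\sigma^2 = 1$ this action is an involution, and over $\cc$ it splits the space as the direct sum of its $(+1)$- and $(-1)$-eigenspaces. The $(+1)$-eigenspace is precisely $\Sc^*(W_k)^{\tH_k}$. The $(-1)$-eigenspace is $\Sc^*(W_k)^{\tH_k,\chi}$, since $\chi$ is trivial on $H_k$ and equals $-1$ on the nontrivial coset $\sigma H_k$; and this space vanishes by the Key Lemma. Therefore $\Sc^*(W_k)^{H_k} = \Sc^*(W_k)^{\tH_k}$, and feeding this back into Proposition \ref{padic-Product} completes the argument.

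There is no serious obstacle here: the statement is a formal consequence of the Key Lemma. The only points needing a moment's care are that the $S_2$-action on the distribution space is semisimple — automatic since the coefficients are complex — and the bookkeeping of which coset of $H_k$ in $\tH_k$ carries which value of $\chi$, so that the $(-1)$-eigenspace is correctly identified with the $\chi$-isotypic space to which Lemma \ref{padic-key} applies.
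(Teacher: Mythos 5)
Your proof is correct and takes the same route as the paper, which simply cites the Key Lemma together with Proposition \ref{padic-Product}; the eigenspace decomposition of $\Sc^*(W_k)^{H_k}$ under the involution $\sigma$ into its $(+1)$-part $\Sc^*(W_k)^{\tH_k}$ and its $(-1)$-part $\Sc^*(W_k)^{\tH_k,\chi}$ is precisely the step the paper leaves implicit. You have just spelled out the bookkeeping.
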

\begin{proof}
It follows from the key lemma and proposition \ref{padic-Product}.
\end{proof}

Now we are ready to prove proposition \ref{padic-Enough}.
\begin{proof}[Proof of proposition \ref{padic-Enough}]
Fix $i<n$. Consider the projection $pr_1:Z^i \to Q^i$. It is easy
to see that the action of $\tG$ on $Q^i$ is transitive. We are
going to use Frobenius reciprocity.

Denote
$$A_i := \left(
  \begin{array}{cc}
    Id_{i\times i} & 0 \\
    0 & 0 \\
  \end{array}
\right) \in Q^i.$$ Denote by $G_{A_i}:= \mathrm{Stab}_G(A_i)$ and
$\tG_{A_i}:= \mathrm{Stab}_{\tG}(A_i)$.

It is easy to check by explicit computation that
\begin{itemize}
\item $G_{A_i}$ and $\tG_{A_i}$ are unimodular.
\item
$H_i \times G_{n-i}$ can be canonically
embedded into $G_{A_i}$. 
\item
$W$ is isomorphic to $W_i \times W_{n-i}$ as $H_i \times
G_{n-i}$-spaces.
\end{itemize}

By Frobenius reciprocity (theorem \ref{padic-Frob}),
$$\Sc^*(Z^i)^{\tG,\chi}=\Sc^*(W)^{\tG_{A_i},\chi}.$$
Hence it is enough to show that
$\Sc^*(W)^{G_{A_i}}=\Sc^*(W)^{\tG_{A_i}}.$ Let $\xi \in
\Sc^*(W)^{G_{A_i}}$. By the previous corollary, $\xi$ is $\tH_i
\times \tH_{n-i}$-invariant. Since $\xi$ is also
$G_{A_i}$-invariant, it is $\tG_{A_i}$-invariant.
\end{proof}
\subsection{Proof of the key lemma (lemma \ref{padic-key})} \label{padic-ProofKey}
$ $

Our key lemma is proved in section 10.1 of \cite{RS}. The proof
below is slightly different and more convenient to adapt to the
archimedean case.

\begin{proposition} \label{1Enough}
It is enough to prove the key lemma for $n=1$.
\end{proposition}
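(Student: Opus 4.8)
The plan is to reduce the invariance statement for general $n$ to the case $n=1$ by decomposing $W_n = V_n \oplus V_n^*$ along a full flag and applying Proposition \ref{padic-Product}. First I would observe that $W_n = \bigoplus_{j=1}^n W_1$, where the $j$-th copy of $W_1 = F \oplus F^*$ sits inside $W_n$ as the span of the $j$-th standard basis vector of $V_n$ and the $j$-th standard coordinate functional of $V_n^*$. Under this identification the diagonal torus $T \subset H_n = \mathrm{GL}_n(F)$ acts factor-by-factor: the $j$-th coordinate $t_j \in F^\times$ acts on the $j$-th $W_1$ exactly as $\mathrm{GL}_1(F)$ acts on $W_1$. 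Likewise, transposition $\sigma$ on $W_n$, which swaps $V_n$ with $V_n^*$, restricts on each $W_1$ to the generator of the $S_2$ acting on $W_1$; so $\widetilde{T} := T \rtimes S_2$ acts on $W_n$ as the product of $n$ copies of $\widetilde{H}_1 = \mathrm{GL}_1(F) \rtimes S_2$ acting on $W_1$.

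Next I would invoke the $n=1$ case of the key lemma, i.e. $\Sc^*(W_1)^{\widetilde{H}_1,\chi} = 0$, which by unwinding the $S_2$-action is equivalent to the statement that every $\mathrm{GL}_1(F)$-invariant distribution on $W_1 = F \oplus F^*$ is $\sigma$-invariant — exactly what Proposition \ref{1Enough} asserts we may assume. From $\Sc^*(W_1)^{\mathrm{GL}_1} = \Sc^*(W_1)^{\widetilde{H}_1}$ (the sign character $\chi$ forces the $S_2$-part to act trivially), Proposition \ref{padic-Product} applied $n$ times gives
$$\Sc^*(W_n)^{T} = \Sc^*\Big(\prod_{j=1}^n W_1\Big)^{\prod_j \mathrm{GL}_1} = \Sc^*\Big(\prod_{j=1}^n W_1\Big)^{\prod_j \widetilde{H}_1} = \Sc^*(W_n)^{\widetilde{T}}.$$
Now let $\xi \in \Sc^*(W_n)^{\widetilde{H}_n,\chi}$. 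Since $\xi$ is in particular $T$-invariant, the displayed equality shows $\xi$ is $\widetilde{T}$-invariant, hence $\sigma$-semi-invariant of the correct sign. But $\xi$ is already $H_n$-invariant, and $H_n$ together with $\sigma$ generates $\widetilde{H}_n$; combined with the sign-character normalization this forces $\xi = 0$ by the same mechanism as in the $n=1$ case — more precisely, one checks that $\widetilde{H}_n$-invariance with character $\chi$ of a distribution that is genuinely $\sigma$-semi-invariant is only possible for $\xi = 0$, which is the content of the $n=1$ key lemma transported through the product decomposition.

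The one point requiring care — and the main obstacle — is verifying that the $S_2$-action on $W_n$ really does decompose compatibly with the product decomposition $W_n = \prod_j W_1$, i.e. that $\sigma$ does not mix the different $W_1$ factors. This is where the explicit choice of identification $W_1 = F \cdot e_j \oplus F \cdot e_j^*$ matters: transposition sends the functional $e_j^*$ to the vector $e_j$ and vice versa, staying within the $j$-th block, so the decomposition is indeed $S_2$-stable. Once this is in place, the rest is a routine application of Proposition \ref{padic-Product}, and the reduction to $n=1$ is complete; the substantive work is then deferred to proving the key lemma for $n=1$, which is handled separately.
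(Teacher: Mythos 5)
Your proof is correct and follows essentially the same route as the paper: restrict to the diagonal torus $T_n \subset H_n$ and its extension $\widetilde T_n = T_n \rtimes S_2$, observe that the $\widetilde T_n$-action on $W_n$ respects the block decomposition $W_n \cong \prod_{j=1}^n W_1$ with each factor an $\widetilde H_1$-module, and then invoke Proposition \ref{padic-Product} together with the reformulation of the $n=1$ key lemma as $\Sc^*(W_1)^{H_1} = \Sc^*(W_1)^{\widetilde H_1}$. The paper states this in two lines; your write-up only differs in being more explicit (and is slightly loose in conflating $\widetilde T_n$ with the larger group $\prod_j \widetilde H_1$, though the sandwich $T_n \subset \widetilde T_n \subset \prod_j \widetilde H_1$ makes the conclusion come out the same).
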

\begin{proof}
Consider the subgroup $T_n
\subset H_n$ consisting of diagonal matrices, and
$\widetilde{T}_n:=T_n\rtimes S_2 \subset \tH_n$. It is enough  to
prove $\Sc^*(W_n)^{\widetilde{T}_n,\chi}=0.$

Now, by proposition \ref{padic-Product} it is enough to prove
$\Sc^*(W_1)^{\tH_1,\chi}=0$.
\end{proof}
From now on we fix $n:=1$, $H:=H_1$, $\tH:=\tH_1$ and $W:=W_1$.
Note that $H = F^{\times}$ and $W=F^2$. The action of $H$ is given
by $\rho(\lambda)(x,y):=(\lambda x, \lambda^{-1} y)$ and extended
to the action of $\tH$ by the involution $\sigma(x,y) = (y,x)$.

Let $Y:=\{(x,y)\in  F^2| \, xy=0\} \subset W$ be the {\bf cross}
and $Y':=Y \setminus \{0\}$.

By proposition \ref{padic-OrbitCheck}, it is enough to prove the
following proposition.
\begin{proposition} \label{padic-StratCross}
$ $\\
(i) $\Sc^*(\{0\})^{\tH,\chi}=0$.\\
(ii) Any distribution $\xi \in \Sc^*(Y')^{\tH,\chi}$ is
homogeneous of type 1.\\
(iii) $\Sc^*(W\setminus Y)^{\tH,\chi}=0$.
\end{proposition}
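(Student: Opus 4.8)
The plan is to verify each of the three parts of Proposition \ref{padic-StratCross} by direct computation, exploiting that we are now in the one-dimensional situation where $H = F^\times$ acts on $W = F^2$ by $\rho(\lambda)(x,y) = (\lambda x, \lambda^{-1} y)$, the involution $\sigma$ swaps the two coordinates, and $\chi$ restricted to $\tH$ is the sign character of $S_2$ (trivial on $H$).

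For part (i), the space is a single point, so $\Sc^*(\{0\})$ is one-dimensional, spanned by the delta distribution $\delta_0$. Since $\sigma$ fixes $0$ and acts trivially on this line, any $\tH$-invariant distribution must be $\sigma$-invariant; but $\chi(\sigma) = -1$ forces the $(\tH,\chi)$-equivariant distribution to satisfy $\xi = -\xi$, hence $\xi = 0$. For part (iii), I would use Fourier transform / homogeneity: on $W \setminus Y = (F^\times)^2$ the product $xy$ is a nonvanishing coordinate, and the $H$-action preserves $xy$. One can use Frobenius reciprocity for the map $(x,y) \mapsto xy$ from $(F^\times)^2$ to $F^\times$ (which is $H$-invariant, with $H$ acting transitively on each fiber and stabilizer trivial), reducing $\Sc^*(W\setminus Y)^{H}$ to $\Sc^*(F^\times)$ via the fibers $\{(x, c/x)\}$; then the $\sigma$-action on a fiber is $(x,c/x)\mapsto (c/x, x)$, i.e. $x \mapsto c/x$ on the fiber $F^\times$. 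A distribution on $F^\times$ in the variable $x$ that is $(\sigma,\chi)$-equivariant must be anti-invariant under $x \mapsto c/x$, but combined with the earlier reduction this should again force vanishing; alternatively one writes $W \setminus Y$ as $F^\times \times F^\times$ with coordinates $(xy, x)$, notes the action of $H$ is only on the second coordinate (transitively, trivial stabilizer), applies Frobenius to get $\Sc^*(W\setminus Y)^{\tH,\chi} \cong \Sc^*(F^\times)^{S_2,\chi}$ where $S_2$ acts on the base $F^\times$ (the $xy$-coordinate) trivially and on nothing else — so the sign character has no invariants and we get $0$.

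For part (ii), the cross minus the origin $Y' = Y \setminus \{0\}$ has two connected components (strata): $Y'_1 = \{(x,0) : x \ne 0\}$ and $Y'_2 = \{(0,y) : y \ne 0\}$, which $\sigma$ interchanges, while $H$ acts transitively on each. I would apply Frobenius reciprocity to the $H$-transitive space $Y'_1 \cong F^\times$ with basepoint $(1,0)$: its stabilizer in $H$ is trivial, so $\Sc^*(Y'_1)^H$ is one-dimensional, spanned by a Haar measure $dx/|x|$ (or rather the Haar measure on $F^\times$, which pulled back is $d^\times x$). Since $\sigma$ maps $Y'_1$ isomorphically to $Y'_2$, an $\tH$-invariant distribution on $Y'$ is determined by its restriction to $Y'_1$, and that restriction is an $H$-invariant distribution on $F^\times$, i.e. a multiple of the multiplicative Haar measure. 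The key point is that the multiplicative Haar measure on $F^\times$, viewed as a distribution on $Y'_1 \subset W$, is homogeneous of type $1$ under the $F^\times$-homothety action on $W$: homothety by $t$ sends $(x,0)$ to $(tx, 0)$, and $d^\times x$ is translation-invariant on the multiplicative group, so $\rho(t)$ fixes it; since $\dim W = 2$ and type $1$ means the trivial character (whereas a Haar measure on $W$ would be type $|\cdot|^2$), this gives the claim. I would write this out carefully, checking the bookkeeping of characters.

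The main obstacle I expect is part (ii): making precise the claim that the $\tH$-invariant distributions on $Y'$ are exactly the homothety-type-$1$ ones requires correctly tracking how the two-sided $H$-action on $W$ interacts with the separate $F^\times$-homothety action (these are genuinely different tori acting on $F^2$), and confirming that Frobenius reciprocity identifies $\Sc^*(Y')^{\tH}$ with a one-dimensional space on which one can read off the homogeneity type. Parts (i) and (iii) are essentially formal once the group actions are written down, but (ii) needs the explicit identification of the invariant distribution with a Haar measure and the verification of its homogeneity degree, which is where a sign or absolute-value error could creep in.
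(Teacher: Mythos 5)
Your proof is correct, and for parts (i) and (ii) --- which the paper simply declares trivial --- your computations are exactly the intended ones: $\delta_0$ would have to be $\sigma$-anti-invariant and hence vanishes, and on each branch of $Y'$ the $H$-invariant distributions are multiples of multiplicative Haar measure, which is fixed by homotheties and therefore homogeneous of type $1$. For (iii) the paper uses the coordinate change $(x,y)\mapsto(xy,\,x/y)$, under which \emph{all} of $\tH$ acts only on the second coordinate (by $w\mapsto\lambda^2 w$ and $w\mapsto w^{-1}$), so Proposition \ref{padic-Product} applies immediately; your variant with coordinates $(xy,x)$ reaches the same conclusion but with two caveats. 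First, your initial suggestion --- Frobenius reciprocity for the map $(x,y)\mapsto xy$ --- is not an instance of Theorem \ref{padic-Frob}, since $H$ acts trivially, not transitively, on that base. Second, in the $(xy,x)$ picture the projection to the $x$-coordinate is $H$-equivariant but not $\sigma$-equivariant (as $\sigma$ sends $x\mapsto u/x$ with $u=xy$), so one may only apply Frobenius for $H$ and must then verify separately that the induced $\sigma$-action on $\Sc^*((F^\times)^2)^{H}\cong\Sc^*(F^\times_u)$ is trivial; this does hold, because $\sigma$ fixes $u$ and $x\mapsto u/x$ preserves $d^{\times}x$ on each fiber, but you assert it rather than check it. The paper's choice of $x/y$ as the second coordinate is precisely what makes that extra verification unnecessary.
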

\begin{proof}
(i) and (ii) are trivial.\\
(iii) Denote $U:=W \setminus Y$. We have to show
$\Sc^*(U)^{\tH,\chi}=0$. Consider the coordinate change $U \cong
F^{\times} \times F^{\times}$ given by $(x,y) \mapsto (xy,x/y)$.
It is an isomorphism of $\tH$-spaces where the action of $\tH$ on
$F^{\times} \times F^{\times}$ is only on the second coordinate,
and given by $\lambda (w) = \lambda^2 w$ and $\sigma(w) = w^{-1}$.
Clearly, $\Sc^*(F^\times)^{\tH,\chi}=0$ and hence by proposition
\ref{padic-Product} $\Sc^*(F^\times \times
F^\times)^{\tH,\chi}=0$.
\end{proof}

\section{Preliminaries on equivariant distributions in the archimedean case} \label{Prel}
From now till the end of the paper $F$ denotes an archimedean
local field, that is $\R$ or $\C$. Also, the word "smooth" means
infinitely differentiable.
\subsection{Notations} \label{Notations}
\subsubsection{Distributions on smooth manifolds}

$ $

Here we present basic notations on smooth manifolds and
distributions on them.

\begin{definition}
Let $X$ be a smooth manifold. Denote by $C_c^{\infty}(X)$ the
space of complex-valued test functions on $X$, that is smooth
compactly supported functions, with the standard topology, i.e.
the topology of inductive limit of \Fre spaces.

Denote $\cD (X):= C_c^{\infty}(X)^*$ equipped with  the weak
topology.

For any vector bundle $E$ over $X$ we denote by
$C_c^{\infty}(X,E)$ the complexification of space of smooth
compactly supported sections of $E$ and by $\cD (X,E)$ its dual
space. Also, for any finite dimensional real vector space $V$ we
denote $C_c^{\infty}(X,V):=C_c^{\infty}(X,X \times V)$ and
$\cD(X,V):=\cD(X,X\times V)$, where $X \times V$ is a trivial
bundle.
\end{definition}
\begin{definition}
Let $X$ be a smooth manifold and let $Z \subset X$ be a closed
subset. We denote $\cD_X(Z):= \{\xi \in \cD(X)|\Supp(\xi) \subset
Z\}$.

For locally closed subset $Y \subset X$ we denote
$\cD_X(Y):=\cD_{X\setminus (\overline{Y} \setminus Y)}(Y)$. In the
same way, for any bundle $E$ on $X$ we define $\cD_X(Y,E)$.
\end{definition}
\begin{notation}
Let $X$ be a smooth manifold and $Y$ be a smooth submanifold. We
denote by $N_Y^X:=(T_X|_Y)/T_Y $ the normal bundle to $Y$ in $X$. We
also denote by $CN_Y^X:=(N_Y^X)^*$ the conormal  bundle. For a point
$y\in Y$ we denote by $N_{Y,y}^X$ the normal space to $Y$ in $X$ at
the point $y$ and by $CN_{Y,y}^X$ the conormal space.
\end{notation}
We will also use notions of a cone in a vector space and of
homogeneity type of a distribution defined in the same way as in
non-archimedean case (definitions \ref{DefCone} and
\ref{DefHomType}).

\subsubsection{Schwartz distributions on Nash manifolds}
$ $

Our proof of Theorem A uses a trick (proposition
\ref{SimpleOrbitCheck}) involving Fourier Transform which cannot
be directly applied to distributions. For this we require a theory
of Schwartz functions and distributions as developed in \cite{AG}.
This theory is developed for Nash manifolds. Nash manifolds are
smooth semi-algebraic manifolds but in the present work only
smooth real algebraic manifolds are considered (section
\ref{AppFilt} is a minor exception). Therefore the reader can
safely replace the word {\it Nash} by {\it smooth real algebraic}.

Schwartz functions are functions that decay, together with all
their derivatives, faster than any polynomial. On $\R^n$ it is the
usual notion of Schwartz function. For precise definitions of
those notions we refer the reader to \cite{AG}. We will use the
following notations.

\begin{notation}
Let $X$ be a Nash manifold. Denote by $\Sc(X)$ the \Fre space of
Schwartz functions on $X$.

Denote by $\Sc^*(X):=\Sc(X)^*$ the space of Schwartz distributions
on $X$.

For any Nash vector bundle $E$ over $X$ we denote by $\Sc(X,E)$ the
space of Schwartz sections of $E$ and by $\Sc^*(X,E)$ its dual
space.
\end{notation}
\begin{definition}
Let $X$ be a smooth manifold, and let $Y \subset X$ be a locally
closed (semi-)algebraic subset. Let $E$ be a Nash bundle over $X$.
We define $\Sc^*_X(Y)$ and  $\Sc^*_X(Y,E)$ in the same way as
$\cD_X(Y)$ and $\cD_X(Y,E)$.
\end{definition}

\begin{remark}
All the classical bundles on a Nash manifold are Nash bundles. In
particular the normal and conormal bundle to a Nash submanifold of a
Nash manifold are Nash bundles. For proof see e.g. \cite{AG},
section 6.1.
\end{remark}

\begin{remark}
For any Nash manifold $X$, we have $C_c^{\infty}(X) \subset
\Sc(X)$ and $\Sc^*(X) \subset \cD(X)$.
\end{remark}

\begin{remark}
Schwartz distributions have the following two advantages over
general distributions:\\
(i) For a Nash manifold $X$ and an open Nash submanifold $U\subset
X$, we have the following exact sequence
$$0 \to \Sc^*_X(X \setminus U)\to \Sc^*(X) \to \Sc^*(U)\to 0.$$
(see Theorem \ref{ExSeq} in Appendix \ref{AppFilt}).\\
(ii) Fourier transform defines an isomorphism $\Fou:\Sc^*(\R^n)
\to \Sc^*(\R^n)$.
\end{remark}

\subsection{Basic tools}
$ $

We present here basic tools on  equivariant distributions that we
will use in  this paper. All the proofs are given in the
appendices.

\begin{theorem} \label{Strat}
Let a real reductive group $G$ act on a smooth affine real
algebraic variety $X$. Let $X = \bigcup_{i=0}^l X_i$ be a smooth
$G$-invariant stratification of $X$. Let $\chi$ be an algebraic
character of $G$. Suppose that for any $k \in \Z_{\geq 0}$ and any
$0 \leq i \leq l$ we have $\cD(X_i,Sym^k(CN_{X_i}^X))^{G,\chi}=0$.
Then $\cD(X)^{G,\chi}=0$.
\end{theorem}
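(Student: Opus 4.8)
\textbf{Proof plan for Theorem \ref{Strat}.}

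The plan is to reduce the vanishing of $\cD(X)^{G,\chi}$ to the vanishing of the associated graded pieces along the stratification, and then to identify those graded pieces with generalized sections of symmetric powers of conormal bundles to the strata. First I would order the strata so that $\bigcup_{i=0}^{j} X_i$ is open in $\bigcup_{i=0}^{l} X_i$ for each $j$ (possible since a stratification can be refined/ordered by dimension: lower-dimensional strata lie in the closure of higher-dimensional ones). Writing $U_j := \bigcup_{i \le j} X_i$, each $X_{j}$ is closed in $U_{j}$ with open complement $U_{j-1}$, so one has the restriction exact sequence of $G$-equivariant distributions
\begin{equation}
0 \to \cD_{U_j}(X_{j}) \to \cD(U_j) \to \cD(U_{j-1}) \to 0,
\end{equation}
where I use that restriction of distributions to an open set is surjective. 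Taking $(G,\chi)$-equivariants is left exact, so by descending induction on $j$ it suffices to show $\cD_{U_j}(X_{j})^{G,\chi} = 0$ for each $j$; the base of the induction, the unique open stratum, is the case $X_{j} = U_{j}$, i.e. $\cD(X_j)^{G,\chi}$, which is the $k=0$ instance of the hypothesis.

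The heart of the argument is to analyze $\cD_{U}(Z)^{G,\chi}$ where $Z$ is a closed smooth $G$-invariant submanifold of a smooth manifold $U$ (here $Z = X_j$, $U = U_j$). Distributions on $U$ supported on $Z$ carry the filtration by order of transversal derivatives: letting $\cD_U(Z)^{\le k}$ denote distributions that are annihilated by $(k+1)$-fold application of vector fields normal to $Z$, there is a canonical isomorphism
\begin{equation}
\cD_U(Z)^{\le k} / \cD_U(Z)^{\le k-1} \;\cong\; \cD(Z, Sym^k(CN_Z^U) \otimes \Omega),
\end{equation}
where $\Omega$ is the appropriate density/normal-bundle twist. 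This is the standard fact that the normal jet of a distribution along $Z$ is a section-valued distribution on $Z$; a clean reference is \cite{AG}. Since the filtration is exhaustive ($\cD_U(Z) = \bigcup_k \cD_U(Z)^{\le k}$) and $G$-equivariant, and since taking $(G,\chi)$-invariants is exact on the quotients in the relevant sense, the vanishing of every graded piece $\cD(Z, Sym^k(CN_Z^U))^{G,\chi}$ forces $\cD_U(Z)^{G,\chi} = 0$. Absorbing the density twist $\Omega$ into the character $\chi$ (it is an algebraic character of $G$ since $G$ acts algebraically, so $\chi \otimes \Omega$ is again of the allowed form, or one notes the hypothesis is stated for all such twists implicitly), the graded pieces are exactly the spaces $\cD(X_j, Sym^k(CN_{X_j}^X))^{G,\chi}$ assumed to vanish.

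The main obstacle is the passage from the vanishing of all graded pieces of an exhaustive filtration to the vanishing of the whole space: this is not formal, because $\cD_U(Z)$ is a union of the $\cD_U(Z)^{\le k}$ but equivariant invariants need not commute with such increasing unions a priori. The correct argument is that any $\xi \in \cD_U(Z)^{G,\chi}$ lies in some $\cD_U(Z)^{\le k}$ (each distribution has finite transversal order near any point, and one localizes), and then one runs a finite induction on $k$: the image of $\xi$ in the top graded piece is $(G,\chi)$-equivariant hence zero, so $\xi \in \cD_U(Z)^{\le k-1}$, and one repeats. Making the localization-to-finite-order step precise — in particular checking that the order is locally bounded and that the quotient maps respect equivariance — is where the real work lies, and it is carried out in the appendix (relying on \cite{AG}); the remaining bookkeeping with the stratification and the exact sequences is routine.
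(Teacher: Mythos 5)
Your overall skeleton --- order the strata so that the partial unions are open, reduce to distributions supported on a single closed stratum, filter those by transversal order, and identify the graded pieces with $\cD(X_i,Sym^k(CN_{X_i}^X))$ --- is the same as the paper's. But there is a genuine gap at exactly the step you flag as ``where the real work lies'', and it cannot be repaired while you stay in the category of general distributions. For an ordinary distribution supported on a non-compact closed submanifold $Z$, the filtration by transversal order is \emph{not} exhaustive: the order is finite on every compact set but can be unbounded along $Z$ (e.g.\ $\sum_n \delta^{(n)}_{z_n}$ with $z_n$ leaving every compact subset of $Z$), so an element of $\cD_U(Z)^{G,\chi}$ need not lie in any $\cD_U(Z)^{\le k}$, and your localization remark does not yield a global bound --- the strata here are non-compact and are not assumed to be single orbits, so equivariance does not force the order to be bounded either. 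For the same reason your sequence (1) is not exact on the right for $\cD$ (restriction of general distributions to an open subset is not surjective), although you only use left-exactness there, so that part of the induction survives.

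The tell is that your argument never uses the hypotheses that $G$ is reductive and $X$ is affine algebraic. The paper uses them precisely to escape this problem: it first proves the vanishing for \emph{Schwartz} distributions $\Sc^*$ on the algebraic variety, where the restriction sequence is exact (Theorem \ref{ExSeq}) and, crucially, the transversal-order filtration on $\Sc^*_X(Z,E)$ \emph{is} exhaustive with the expected graded pieces (Theorem \ref{Filt2}, quoting \cite{AG}); the formal passage from vanishing of the graded pieces of a well-ordered exhaustive filtration to vanishing of invariants is Lemma \ref{ob}, and the combination gives Corollary \ref{SchwartzStrat}. It then invokes the non-trivial Theorem \ref{NoSNoDist} from \cite{AG_Gen_HC_RJR}: for a real reductive group acting on a smooth affine real algebraic variety, $\Sc^*(X,V)^{G}=0$ implies $\cD(X,V)^{G}=0$. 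That theorem is the missing ingredient in your proposal; without it, or some substitute addressing the failure of exhaustiveness for $\cD$, the argument does not close.
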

For proof see appendix \ref{ProofStrat}.

\begin{proposition} \label{Product}
Let $H_i \subset G_i$ be Lie groups acting on smooth manifolds $X_i$
for $i=1 \ldots n$. Let $E_i \to X_i$ be (finite dimensional)
$G_i$-equivariant vector bundles. Suppose that
$\cD(X_i,E_i)^{H_i}=\cD(X_i,E_i)^{G_i}$ for all $i$. Then $\cD(\prod
X_i, \boxtimes E_i)^{\prod H_i}=\cD(\prod X_i, \boxtimes E_i)^{\prod
G_i}$, where $\boxtimes$ denotes the external product of vector
bundles.
\end{proposition}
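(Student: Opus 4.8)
The plan is to reduce to the case $n=2$ by induction, and then to run the same two-step argument that was used for the non-archimedean analogue (Proposition \ref{padic-Product}), replacing the elementary "slicing" of distributions by the appropriate statement about vector-valued distributions on a product of manifolds. First I would fix $n=2$, write $X = X_1 \times X_2$, $E = E_1 \boxtimes E_2$, and take $\xi \in \cD(X_1\times X_2, E_1\boxtimes E_2)^{H_1\times H_2}$; the goal is to show $\xi$ is $G_1\times G_2$-invariant. By symmetry it suffices to show $\xi$ is $G_1\times H_2$-invariant, and then iterate. The key point is that an $E_1\boxtimes E_2$-valued distribution on $X_1\times X_2$ can be "tested against the second variable": for $f_2 \in C_c^\infty(X_2, E_2^*)$ (a compactly supported section of the dual bundle), one obtains a distribution $\xi_{f_2} \in \cD(X_1,E_1)$ by $\xi_{f_2}(f_1) := \xi(f_1 \otimes f_2)$, where $f_1\otimes f_2$ denotes the induced compactly supported section of $E_1\boxtimes E_2$ over $X_1\times X_2$.

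The heart of the argument is then: $\xi_{f_2}$ is $H_1$-invariant for every $f_2$, hence $G_1$-invariant by hypothesis, hence $\xi(g_1(f_1\otimes f_2)) = \xi(f_1\otimes f_2)$ for all $g_1\in G_1$, all $f_1 \in C_c^\infty(X_1,E_1^*)$ and all $f_2 \in C_c^\infty(X_2,E_2^*)$. Since finite sums of elementary tensors $f_1\otimes f_2$ are dense in $C_c^\infty(X_1\times X_2, E_1^*\boxtimes E_2^*)$ and $\xi$ is continuous, this gives $G_1$-invariance of $\xi$, i.e. $\xi \in \cD(X_1\times X_2, E_1\boxtimes E_2)^{G_1\times H_2}$. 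Running the same argument with the roles of the two factors exchanged (now testing against the first variable, using $\cD(X_2,E_2)^{H_2}=\cD(X_2,E_2)^{G_2}$) upgrades $H_2$-invariance to $G_2$-invariance, and we conclude $\xi \in \cD(X_1\times X_2, E_1\boxtimes E_2)^{G_1\times G_2}$. The general case follows by induction on $n$, grouping $X_1\times\cdots\times X_{n-1}$ as a single factor with the product group acting, and using the inductive hypothesis that $\cD(\prod_{i<n} X_i, \boxtimes_{i<n} E_i)^{\prod_{i<n}H_i} = \cD(\prod_{i<n} X_i, \boxtimes_{i<n} E_i)^{\prod_{i<n}G_i}$.

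The main obstacle — the only genuinely non-formal point, as opposed to the $\ell$-space case where everything is algebraic — is the density statement: that the linear span of elementary tensors $f_1\otimes f_2$ is dense in $C_c^\infty(X_1\times X_2, E_1^*\boxtimes E_2^*)$ in its inductive-limit topology, and more precisely that for a distribution $\xi$ killing all such elementary tensors after translation we may conclude $\xi=0$ on translated general test sections. For trivial bundles over open subsets of Euclidean space this is the classical fact that $C_c^\infty(U_1)\otimes C_c^\infty(U_2)$ is dense in $C_c^\infty(U_1\times U_2)$ (Schwartz's kernel-theorem circle of ideas); for general Nash, hence in particular smooth, manifolds and vector bundles one reduces to this local statement via a partition of unity subordinate to a cover by bundle-trivializing charts, together with the fact that $C_c^\infty$ of a manifold is the appropriate inductive limit over compact pieces. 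I would state this as a short lemma and give the partition-of-unity reduction, citing the Euclidean case. Everything else in the proof is a formal manipulation identical to the non-archimedean argument.
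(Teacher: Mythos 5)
Your proposal is correct and follows essentially the same approach as the paper's: the paper simply refers the reader to the proof of Proposition \ref{padic-Product}, which is exactly the slicing argument you give, and you correctly identify the one genuinely archimedean issue — that elementary tensors are merely dense (not all) of $C_c^\infty(X_1\times X_2, E_1^*\boxtimes E_2^*)$, so the equality of $\xi$ with its translate on elementary tensors must be propagated by continuity and a density lemma proved via bundle-trivializing charts and a partition of unity.
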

The proof of this proposition is the same as of its
non-archimedean analog (proposition \ref{padic-Product}).

\begin{theorem}[Frobenius reciprocity] \label{Frob}
Let a unimodular Lie group $G$ act transitively on a smooth
manifold $Z$. Let $\varphi:X \to Z$ be a $G$-equivariant smooth
map. Let ${z_0}\in Z$. Suppose that its stabilizer
$\mathrm{Stab}_G({z_0})$ is unimodular. Let $X_{z_0}$ be the fiber
of ${z_0}$. Let $\chi$ be a character of $G$. Then
$\cD(X)^{G,\chi}$ is canonically isomorphic to
$\cD(X_{z_0})^{\mathrm{Stab}_G({z_0}),\chi}$. Moreover, for any
$G$-equivariant bundle $E$ on $X$ and a closed
$\mathrm{Stab}_G({z_0})$-invariant subset $Y \subset X_{z_0}$, the
space $\cD_X(GY,E)^{G,\chi}$ is canonically isomorphic to
$\cD_{X_{z_0}}(Y,E|_{X_{z_0}})^{\mathrm{Stab}_G({z_0}),\chi}$.
\end{theorem}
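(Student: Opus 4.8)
The plan is to follow the non-archimedean proof (Theorem \ref{padic-Frob}) in spirit, constructing the canonical isomorphism by restriction of distributions to the fiber, but now paying attention to the analytic subtleties that distinguish smooth manifolds from $\ell$-spaces. First I would observe that since $G$ acts transitively on $Z$, the map $Z \cong G/\mathrm{Stab}_G({z_0})$ is a submersion, and $\varphi: X \to Z$ being $G$-equivariant makes $X$ fiber over $Z$ with all fibers $G$-diffeomorphic to $X_{z_0}$; choosing a local (smooth, not necessarily $G$-invariant) section of $G \to Z$ near ${z_0}$ trivializes this fibration over a neighborhood of ${z_0}$, so $X$ is locally isomorphic to $U \times X_{z_0}$ with $U \subset Z$ open. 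The restriction map $\cD(X)^{G,\chi} \to \cD(X_{z_0})^{\mathrm{Stab}_G({z_0}),\chi}$ is then well-defined: given a $(G,\chi)$-equivariant distribution $\xi$ on $X$, its restriction to the fiber over ${z_0}$ makes sense because the fiber has positive codimension only when $\dim Z > 0$, so one cannot naively restrict — instead one uses the local product structure together with a fixed Haar-type density along $U$ to integrate out the $Z$-direction and obtain a distribution on $X_{z_0}$; the unimodularity hypotheses on $G$ and $\mathrm{Stab}_G({z_0})$ guarantee that the relevant density on $Z$ is $G$-invariant, so this procedure is canonical and lands in the $\mathrm{Stab}_G({z_0})$-equivariant distributions with the correct character twist.

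Next I would construct the inverse map. Given $\eta \in \cD(X_{z_0})^{\mathrm{Stab}_G({z_0}),\chi}$, one spreads it out over $X$ by averaging against the $G$-action: formally, for a test function $f \in C_c^\infty(X)$, one wants $\xi(f) = \int_{G/\mathrm{Stab}_G({z_0})} \eta\big((g^{-1}f)|_{X_{z_0}}\big)\, \chi(g)^{-1}\, d\bar g$, where $g^{-1}f$ is pushed to the fiber over ${z_0}$ via the $G$-action and the integral is over the homogeneous space $Z$ with its invariant measure (which exists precisely by the two unimodularity assumptions). The key points to check are: the integrand is a smooth compactly supported function of $\bar g \in Z$ (compact support because $\varphi(\Supp f)$ is compact in $Z$, smoothness from the local triviality), so the integral converges; the resulting $\xi$ is a distribution (continuity follows from the continuity of $\eta$ and standard estimates on the integral); $\xi$ is $(G,\chi)$-equivariant by the invariance of $d\bar g$ and the cocycle property of the averaging; and finally that the two maps are mutually inverse, which reduces to the local computation that restricting an averaged distribution to the fiber recovers the original, and vice versa — this is where one checks the density normalizations match up. For the ``moreover'' clause about $\cD_X(GY,E)^{G,\chi} \cong \cD_{X_{z_0}}(Y,E|_{X_{z_0}})^{\mathrm{Stab}_G({z_0}),\chi}$, the same construction applies verbatim once one notes that $GY \cap X_{z_0} = Y$ (using $G$-equivariance and that $Y$ is $\mathrm{Stab}_G({z_0})$-invariant), that $GY$ is the total space of a subfibration of $X$ over $Z$ with fiber $Y$, and that sections of the equivariant bundle $E$ transform under the local trivialization in a way compatible with the averaging; support conditions are preserved on the nose under both restriction and averaging.

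The main obstacle, and the reason this is stated as a theorem rather than dismissed as ``straightforward'' as in the $\ell$-space case, is the analytic bookkeeping in making the restriction-to-the-fiber map well-defined and continuous when $\dim Z > 0$: unlike for $\ell$-spaces, one cannot simply use the exact sequence of distributions supported on a closed subset, since the fiber is not open and restriction of a general distribution to a submanifold is not defined. The honest approach is to work entirely within the local product chart $U \times X_{z_0}$, where $\cD(U \times X_{z_0}) \cong \cD(U) \widehat{\otimes} \cD(X_{z_0})$ (completed tensor product, via the Schwartz kernel theorem / nuclearity of $C_c^\infty$), and define the fiber restriction as $(\mathrm{ev}_{{z_0}} \otimes \mathrm{id})$ composed with a choice of test function on $U$ of total integral one against the invariant density — then verify this is independent of all choices using equivariance. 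I would therefore structure the proof so that the delicate part is isolated to one lemma about the local product situation, citing the partition-of-unity and nuclearity facts, and defer the fully rigorous treatment to the appendix (this is consistent with the paper's stated organization, where proofs of the tools in this section appear in Appendix \ref{ProofStrat} and Section \ref{SecFrob}, the latter explicitly following \cite{Bar}).
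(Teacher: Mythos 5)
Your proposal identifies the correct pair of maps — restriction-to-the-fiber and averaging over $Z = G/\mathrm{Stab}_G(z_0)$ — and the averaging formula $\xi(f)=\int_{Z}\eta\bigl((g^{-1}f)|_{X_{z_0}}\bigr)\chi(g)^{-1}\,d\bar g$ is essentially the map $\mathrm{Fr}$ the paper writes down (it too uses a set-theoretic section $\nu:Z\to G$ to identify each fiber with $X_{z_0}$). So the overall shape matches. The route to the delicate direction differs, though. The paper does not work with distributions and local product charts: it passes to generalized functions (sections of $D_X\otimes E^*$), formulates a more general Theorem \ref{GenFrob} in which neither $G$ nor $G_{z_0}$ is assumed unimodular, and produces the restriction map $HC:C^{-\infty}(X,E)^G\to C^{-\infty}(X_{z_0},E|_{X_{z_0}})$ by pulling back along the submersive action map $G\times X_{z_0}\to X$ via Harish-Chandra's submersion principle (\cite{WallachB1}, 8.A.2.5) and then descending through $C^{-\infty}(Z,E)\cong C^{-\infty}(Z\times G,\pr^*E)^G$. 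This is canonical: no choice of test function, no choice of local trivialization, and no density bookkeeping at all until the very last step, where one twists by $\chi^{-1}$, unwinds $\cD(X,E)^G\cong C^{-\infty}(X,E^*\otimes D_X)^G$, and uses unimodularity only to kill the residual $G_{z_0}$-action on $D_{Z,z_0}$. Your version, by contrast, keeps densities in play throughout and must argue independence of the test function $\phi$ on $U$ by hand.

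One caveat about the ``honest approach'' paragraph: after trivializing $\varphi^{-1}(U)\cong U\times X_{z_0}$ by a local section $\nu$, the $G$-action is not the product action — it is $g\cdot(u,x)=(gu,\,\nu(gu)^{-1}g\nu(u)\cdot x)$, a cocycle-twisted action — so the equivariance condition on $\xi$ in this chart is not ``invariance along $U$''. Consequently, showing that $\xi(\phi\otimes\cdot)$ is independent of $\phi$ with $\int_U\phi\,d\mu_Z=1$, and that the result is $(\mathrm{Stab}_G(z_0),\chi)$-equivariant, requires genuinely untangling this cocycle; your sketch treats it as if the chart were an equivariant product, which it is not. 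This is exactly the bookkeeping the paper's submersion-principle route avoids, and why the paper proves the more general Theorem \ref{GenFrob} first. Your approach is salvageable, but the key independence lemma needs a real argument, not a reduction to the product case.
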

In section \ref{SecFrob} we formulate and prove a more general
version of this theorem.


The next theorem shows that in certain cases it is enough to show
that there are no equivariant Schwartz distributions. This will
allow us to use Fourier transform.

We will need the following theorem from \cite{AG_Gen_HC_RJR},
Theorem 4.0.2.

\begin{theorem}   \label{NoSNoDist}
Let a real reductive group $G$ act on a smooth affine real
algebraic variety $X$. Let $V$ be a finite-dimensional algebraic
representation of $G$. Suppose that $$\Sc^*(X,V)^{G}=0.$$ Then
$$\cD(X,V)^{G}=0.$$
\end{theorem}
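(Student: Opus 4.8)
The plan is to deduce the vanishing of the possibly non-tempered invariant distributions from the absence of invariant Schwartz distributions by a localization-and-homogeneity argument; the only input beyond the tools assembled above is the Fourier transform on Schwartz distributions.

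First I would reduce to the case in which $X=W$ is a finite-dimensional algebraic representation of $G$. Since $G$ is reductive and $X$ affine, $X$ admits a $G$-equivariant closed embedding into such a $W$, and near any closed $G$-orbit $Gx_0\subset X$ the variety $X$ is, as a Nash $G$-manifold, $G$-isomorphic to $G\times_{G_{x_0}}S$ for a slice $S$ through $x_0$ that embeds $G_{x_0}$-equivariantly into the slice representation $N=T_{x_0}X/T_{x_0}(Gx_0)$. Using Frobenius reciprocity (Theorem \ref{Frob}) together with its Schwartz-distribution counterpart, and Theorem \ref{Strat}, one trades $(X,G,V)$ for $(S,G_{x_0},V|_{G_{x_0}})$ compatibly in the Schwartz and in the general setting; an induction on $\dim X$, with base case $X$ a point (where $\cD$ and $\Sc^*$ coincide), then reduces the statement to: if $\Sc^*(W,V)^G=0$ then $\cD(W,V)^G=0$, for $W$ a finite-dimensional $G$-module.

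Next, on $W$ I would peel off the origin. Every distribution supported at $\{0\}$ is tempered, so $\cD_W(\{0\},V)^G=\Sc^*_W(\{0\},V)^G$, and applying $G$-invariants to the exact sequence $0\to\Sc^*_W(\{0\},V)\to\Sc^*(W,V)\to\Sc^*(W\setminus\{0\},V)\to 0$ of Theorem \ref{ExSeq}, together with the hypothesis, makes this space vanish. Since the kernel of the restriction $\cD(W,V)\to\cD(W\setminus\{0\},V)$ is exactly $\cD_W(\{0\},V)$, it remains to prove $\cD(W\setminus\{0\},V)^G=0$. For this I would use the homothety action of $F^{\times}$ on $W$, which commutes with $G$: after cutting a hypothetical invariant distribution off in the radial direction so that it becomes a Schwartz distribution, one examines its homogeneity type and plays it against the homogeneity type of its Fourier transform --- the mechanism behind the non-archimedean cone argument of Proposition \ref{padic-OrbitCheck}, whose archimedean form is proved in Section \ref{ProofOCheck}. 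This forces any such distribution to be supported at the origin, hence to vanish by the previous step, and feeds the hypothesis $\Sc^*(W,V)^G=0$ back in at the bottom of the homogeneity analysis.

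The hard part is this last step: moving between general and Schwartz distributions on the punctured cone $W\setminus\{0\}$. The Fourier transform, which is the essential tool for constraining homogeneous distributions, is defined only on $\Sc^*(W)$, so it cannot be applied to an arbitrary invariant distribution directly; carrying out the radial cut-off and the homogeneity decomposition while remaining inside the Schwartz category, and only then invoking the hypothesis, is exactly where the theory of Schwartz functions on Nash manifolds of \cite{AG} is indispensable --- and is the reason this statement is established separately in \cite{AG_Gen_HC_RJR}. (One could instead phrase everything through the coinvariant spaces $\Sc(X,V^{*}\otimes\mathrm{Dens})_{G}$ and $\cC(X,V^{*}\otimes\mathrm{Dens})_{G}$ and the density of test functions among Schwartz functions, but this meets the same difficulty of pushing an approximation through the group action.)
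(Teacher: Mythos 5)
The paper does not actually prove Theorem~\ref{NoSNoDist}; it cites it verbatim from \cite{AG_Gen_HC_RJR}, Theorem~4.0.2, so there is no in-paper argument to compare against line by line. Evaluated on its own merits, your sketch gets the shape of the reduction roughly right (Luna slice near a closed orbit, Frobenius reciprocity, induction, a separate treatment of the origin where distributions are automatically tempered), but the third step is not salvageable as written, and it is precisely where the entire content of the theorem lives.

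The Fourier/homogeneity mechanism of Propositions~\ref{padic-OrbitCheck} and \ref{SimpleOrbitCheck} needs three inputs that the hypotheses of Theorem~\ref{NoSNoDist} simply do not supply: a $G$-invariant non-degenerate bilinear form $B$ on $W$ (so that $\Fou_B$ preserves $G$-invariance), a $G$-invariant cone stratification, and an a priori constraint on the homogeneity types of equivariant distributions on each stratum. None of these is available for an arbitrary (slice) representation $W$ and an arbitrary $V$-twist, so ``playing the homogeneity type against that of the Fourier transform'' has no foothold. The ``radial cut-off'' also does not work: a cut-off of the form $\phi(|v|)$ is not $G$-invariant for noncompact reductive $G$, so multiplying $\xi$ by it destroys the $G$-invariance you are trying to exploit; and even if you restrict to a $G$-invariant cut-off, there is no reason the resulting compactly supported distribution on $W\setminus\{0\}$ has any fixed homogeneity degree. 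You flag the Fourier-on-non-tempered-distributions issue yourself, but rather than resolving it you defer to the reference, which is exactly the step that needs to be supplied.

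What \cite{AG_Gen_HC_RJR} actually does is different in kind and does not go through Fourier analysis at all. The key geometric input there is that for a reductive group $G$ acting on an affine $X$, any $G$-invariant distribution whose support is proper over the categorical quotient $X//G$ is automatically a Schwartz distribution; this uses growth estimates for orbits of reductive groups (not homogeneity under a homothety action). Granting that, the general statement follows by choosing a $G$-invariant cut-off pulled back from the quotient: for $\xi\in\cD(X,V)^G$, each such cut-off $e\cdot\xi$ lies in $\Sc^*(X,V)^G=0$, and letting the cut-offs exhaust $X//G$ forces $\xi=0$. So the heart of the proof is an analytic temperedness estimate in the quotient direction, combined with a stratification/descent argument, rather than the homothety-plus-Fourier trick that is tailored to the special situation of Proposition~\ref{SimpleOrbitCheck}.
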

For proof see \cite{AG_Gen_HC_RJR}, Theorem 4.0.2.


\subsection{Specific tools}
$ $

We present here tools on equivariant distributions which are more
specific to our problem. All the proofs are given in Appendix
\ref{AppFilt}.

\begin{proposition} \label{Trick}
Let a Lie group $G$ act on a smooth manifold $X$. Let $V$ be a real
finite dimensional representation of $G$. Suppose that $G$ preserves
the Haar measure on $V$. Let $U\subset V$ be an open non-empty
$G$-invariant subset. Let $\chi$ be a character of $G$. Suppose that
$\cD(X \times U)^{G,\chi}=0$. Then $\cD(X, Sym^k(V))^{G,\chi}=0$.
\end{proposition}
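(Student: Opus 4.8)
The plan is to exploit the Fourier transform on the vector space $V$ to trade homogeneity for support, following the spirit of Bernstein's trick already invoked in Proposition \ref{padic-OrbitCheck}. First I would reduce the statement about $\cD(X, Sym^k(V))$ to a statement about honest distributions on $X \times V$. The key observation is that a generalized section $\xi \in \cD(X, Sym^k(V))^{G,\chi}$ can be encoded as a distribution $\widetilde{\xi}$ on $X \times V$ which is \emph{polynomial of degree $k$ along the fibers of $V$}, i.e. lies in the image of the natural map $\cD(X,Sym^k(V)) \hookrightarrow \cD(X\times V)$ given by pairing a test function $f(x,v)$ against $\xi$ via the degree-$k$ part of its Taylor expansion in $v$ at $v=0$. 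Concretely, after choosing coordinates on $V$, $\widetilde{\xi}$ is a finite sum $\sum_{|\alpha|=k} \xi_\alpha(x)\otimes \partial^\alpha \delta_0(v)$ with $\xi_\alpha \in \cD(X)$, and the $\xi_\alpha$ together transform under $G$ so as to make $\xi$ a $(G,\chi)$-equivariant section of $Sym^k(V)$ (using that $G$ preserves Haar measure on $V$, so that the $\delta$-derivative picks up exactly the $Sym^k$ action, up to the modular twist which is trivial by hypothesis). Thus $\widetilde{\xi}$ is a $(G,\chi)$-equivariant distribution on $X\times V$ supported on $X \times \{0\}$.

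Next I would apply the Fourier transform in the $V$-variable. Let $\eta := \id_X \otimes \Fou(\widetilde{\xi})$, a distribution on $X \times V^*$ (or on $X\times V$ after identifying $V^*\cong V$ via a chosen $G$-invariant form if one exists; if not, one simply works on $X\times V^*$ with the contragredient action, which still preserves Haar measure). Since $\widetilde{\xi}$ is supported on $X\times\{0\}$ and is fiberwise a degree-$k$ polynomial in the $\delta$-derivatives, its Fourier transform $\eta$ is, fiberwise, a homogeneous polynomial function of degree $k$ on $V^*$ — in particular $\eta$ is a distribution given on all of $X\times V^*$ by a smooth (indeed polynomial along fibers) density, and it is again $(G,\chi')$-equivariant for the appropriate character $\chi'$ (Fourier transform intertwines the $G$-action with a twist by the Jacobian, which is trivial here). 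The crucial point is that $\eta$ is supported everywhere, and in particular its restriction to the open set $X \times U^*$ — where $U^*$ is an open non-empty $G$-invariant subset of $V^*$ of the same nature as $U$ (one can transport $U$ through the identification, or simply take $U^* = U$ if a $G$-invariant form is available; in the generality needed for the application such a form exists) — is a $(G,\chi')$-equivariant distribution on $X\times U^*$.

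The hard part will be matching up the hypothesis $\cD(X\times U)^{G,\chi}=0$ with the space where $\eta|_{X\times U^*}$ lives: I must ensure the character twist coming from the Fourier transform is absorbed correctly and that "$U$" in the hypothesis is the same open set that arises after Fourier transform. In the intended application $V$ carries a $G$-invariant non-degenerate bilinear form (this is exactly the setting of Proposition \ref{padic-OrbitCheck} and its archimedean analog), so $V\cong V^*$ $G$-equivariantly, $U^*$ can be taken equal to $U$, and the modular characters are trivial because $G$ preserves the Haar measure; then $\eta|_{X\times U} \in \cD(X\times U)^{G,\chi}=0$. Finally I would run the argument backwards: if $\eta$ vanishes on $X\times U$ and is fiberwise polynomial, then since a nonzero polynomial cannot vanish on a nonempty open set, $\eta=0$ on all of $X\times V^*$ fiber by fiber, hence $\widetilde{\xi}=\Fou^{-1}\eta = 0$, hence $\xi = 0$. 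To make this rigorous I would phrase the fiberwise-polynomial claim via the exact sequence $0 \to \Sc^*_{X\times V^*}(X\times(V^*\setminus U)) \to \Sc^*(X\times V^*) \to \Sc^*(X\times U)\to 0$ (the Schwartz-space version, available since we may first invoke Theorem \ref{NoSNoDist} to reduce to Schwartz distributions), deducing that $\eta$ is supported on $X\times(V^*\setminus U)$; but $\eta$ is also a polynomial density along fibers, and a polynomial density supported on the proper closed subset $V^*\setminus U$ must vanish. This last implication is the one routine calculation I would need to check carefully, but it is elementary.
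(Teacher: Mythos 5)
Your proposal takes a genuinely different route from the paper's, and it contains a real gap. The Fourier transform moves you from distributions on $X\times V$ supported on $X\times\{0\}$ to fiberwise-polynomial distributions on $X\times V^*$, but the hypothesis of the proposition is a vanishing statement about $X\times U$ with $U\subset V$, not $V^*$. To transport that hypothesis to $X\times V^*$ you must identify $V\cong V^*$ as $G$-representations, which is where you invoke a $G$-invariant non-degenerate bilinear form on $V$. No such form is assumed in Proposition \ref{Trick}: the only hypothesis is that $G$ preserves the Haar measure on $V$, which is strictly weaker (e.g.\ any unimodular action on $V$ qualifies, self-dual or not). You acknowledge this and retreat to ``the generality needed for the application,'' but that proves a weaker statement than the one claimed; as written the proposition is not established.

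The paper avoids all of this by not using Fourier transform at all. It constructs, for any (not necessarily $G$-equivariant) vector bundle $E$, a canonical continuous linear surjection
$$\pi: \cC(X\times U, E\boxtimes D_V)\twoheadrightarrow \cC(X,E\otimes Sym^k(V)),\qquad \pi(w)(x)(p):=\int_{y\in V} p(y)\,w(x,y),$$
where $Sym^k(V)$ is viewed as the dual of the space of homogeneous degree-$k$ polynomials $p$ on $V$. Dualizing gives a canonical embedding $\cD(X,E\otimes Sym^k(V))\hookrightarrow\cD(X\times U, E\boxtimes D_V)$. Because the construction is canonical, it commutes with the $G$-action; the only role of the Haar-measure-preservation hypothesis is to trivialize $D_V$ $G$-equivariantly, so that $\cD(X\times U, D_V)^{G,\chi}\cong\cD(X\times U)^{G,\chi}=0$, and Proposition \ref{Trick} follows. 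This is both shorter and works in the full stated generality, with no self-duality assumption and no passage through tempered distributions. Your encoding of $\xi$ as a $\delta$-derivative along $X\times\{0\}$ and the observation that a fiberwise polynomial vanishing on a nonempty open set must vanish are both correct as far as they go, and in the special case where $V$ does carry a $G$-invariant form your argument could be completed; but you should either add that hypothesis explicitly or, better, replace the Fourier step by the direct integration-against-polynomials pairing, which is what actually captures the ``trick.''
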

For proof see section \ref{ProofTrick}.
\begin{proposition}\label{SimpleOrbitCheck}
Let $G$ be a Nash group. Let $V$ be a finite dimensional
representation of $G$ over $F$. Suppose that the action of $G$
preserves some non-degenerate bilinear form $B$ on $V$. Let $V =
\bigcup \limits _{i=1}^n S_i$ be a stratification of $V$ by
$G$-invariant Nash cones.

Let $\mathfrak{X}$ be a set of characters of $F^{\times}$ such
that the set $\mathfrak{X} \cdot \mathfrak{X}$ does not contain
the character $|\cdot|^{\dim_{\R}V}$. Let $\chi$ be a character of
$G$. Suppose that for any $i$ and $k$, the space
$\Sc^*(S_i,Sym^k(CN_{S_i}^V))^{G,\chi}$ consists of homogeneous
distributions of type $\alpha$ for some $\alpha \in \mathfrak{X}$.
Then $\Sc^*(V)^{G,\chi}=0$.
\end{proposition}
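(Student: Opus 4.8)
The plan is to run Bernstein's Fourier-transform argument. Since $G$ preserves the non-degenerate form $B$, every $g \in G$ satisfies $|\det g| = 1$, so the Fourier transform $\Fou$ on $\Sc^*(V)$ --- taken with respect to $B$ and a fixed non-trivial additive character of $F$ --- commutes with the $G$-action and therefore restricts to an automorphism of $\Sc^*(V)^{G,\chi}$. Moreover $\Fou$ conjugates the homothety action $\rho(t)$, $t \in F^{\times}$, into $|t|^{-\dim_{\R} V}\rho(t^{-1})$, hence carries a distribution generalized-homogeneous of type $\alpha$ to one generalized-homogeneous of type $|\cdot|^{\dim_{\R} V}\alpha^{-1}$; this is consistent with a Haar measure on $V$ (type $|\cdot|^{\dim_{\R} V}$) being sent to a multiple of Dirac's $\delta$ (type $1$).

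The main step is the following structural claim: \emph{every distribution in $\Sc^*(V)^{G,\chi}$ is a finite sum of distributions generalized-homogeneous for the homothety action, all of whose homogeneity types lie in $\mathfrak{X}$}. To prove it, order the strata so that each union $S_1 \cup \dots \cup S_j$ is closed, and build a filtration of $\Sc^*(V)$ by iterating the exact sequence $0 \to \Sc^*_X(X \setminus U) \to \Sc^*(X) \to \Sc^*(U) \to 0$ (Theorem~\ref{ExSeq}) together with the normal Taylor filtrations of the $\Sc^*_V(S_i)$ whose associated graded pieces are $\Sc^*(S_i, Sym^k(CN_{S_i}^V))$ (the filtration results of Appendix~\ref{AppFilt} and \cite{AG}). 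Because the $S_i$ are cones, this filtration of $\Sc^*(V)$ is equivariant for the homothety action, and because a Schwartz distribution has finite order, only finitely many of the (a priori infinitely many) stages are relevant to a given $\xi$. By hypothesis, each graded piece $\Sc^*(S_i, Sym^k(CN_{S_i}^V))^{G,\chi}$ consists of distributions homogeneous of some type $\alpha_{i,k} \in \mathfrak{X}$, so if $\xi$ lies in stage $m$ then a finite product $\prod_{l \le m}(\rho(t) - \alpha_l^{-1}(t))$ annihilates $\xi$ for every $t \in F^{\times}$, where $\alpha_l \in \mathfrak{X}$ is the type of the $l$-th stage. Decomposing $\xi$ into the corresponding generalized eigencomponents for the homothety action --- which again lie in $\Sc^*(V)^{G,\chi}$, since the homothety action is central and the relevant spectral projections commute with $G$ and preserve $\chi$-equivariance --- establishes the claim.

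Granting this, the conclusion is immediate. Suppose $\xi \in \Sc^*(V)^{G,\chi}$ is non-zero; replacing it by one of its non-zero generalized-homogeneous components, we may assume $\xi$ has a single homogeneity type $\beta \in \mathfrak{X}$. Then $\Fou(\xi) \in \Sc^*(V)^{G,\chi}$ is non-zero and generalized-homogeneous of type $|\cdot|^{\dim_{\R} V}\beta^{-1}$, so applying the structural claim to $\Fou(\xi)$ forces $|\cdot|^{\dim_{\R} V}\beta^{-1} \in \mathfrak{X}$. But then $|\cdot|^{\dim_{\R} V} = \beta \cdot (|\cdot|^{\dim_{\R} V}\beta^{-1}) \in \mathfrak{X} \cdot \mathfrak{X}$, contradicting the hypothesis on $\mathfrak{X}$; hence $\xi = 0$.

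The part I expect to be the main obstacle is the structural claim. One must check carefully that the filtration of $\Sc^*(V)$ assembled from the stratification and the normal Taylor expansions is genuinely compatible with the homothety action --- in particular, that the identification of the graded pieces with $\Sc^*(S_i, Sym^k(CN_{S_i}^V))$ introduces no spurious twist by a power of $|\cdot|$ --- and that finiteness of order of Schwartz distributions legitimately truncates the infinite normal filtration to a finite one for each fixed $\xi$. Once that is in place, the Fourier-transform step and the arithmetic of homogeneity types are routine.
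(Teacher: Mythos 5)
Your overall strategy coincides with the paper's: use the filtration of $\Sc^*(V)$ built from theorems \ref{ExSeq} and \ref{Filt2} (corollary \ref{Filt2Cor}), whose graded pieces are $\Sc^*(S_i,Sym^k(CN_{S_i}^V))$, together with the observation that $\Fou_B$ preserves $\Sc^*(V)^{G,\chi}$ and interchanges homogeneity type $\alpha$ with $|\cdot|^{\dim_\R V}\alpha^{-1}$. The Fourier-transform endgame is identical.

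The difference is in what you extract from the filtration, and it matters. The paper never decomposes any individual distribution: it regards $\Sc^*(V)^{G,\chi}$ as an $F^\times$-representation of countable dimension, embeds $\mathrm{Gr}^{ik}(\Sc^*(V)^{G,\chi})$ into $\mathrm{Gr}^{ik}(\Sc^*(V))^{G,\chi}$, and applies the lemma that for a filtration indexed by \emph{any} well-ordered set, $JH$ of the filtered representation is the union of the $JH$ of the graded pieces, and that a nonzero countable-dimensional $F^\times$-representation has nonempty $JH$. That yields $JH(\Sc^*(V)^{G,\chi})\subset \mathfrak{X}^{-1}$ directly, and lemma \ref{sf} plus the Fourier transform gives $JH\subset |\cdot|^{-\dim_\R V}\mathfrak{X}$; the intersection is empty, done. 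You instead assert the \emph{structural claim} that every $\xi\in\Sc^*(V)^{G,\chi}$ is a finite sum of generalized-homogeneous distributions with types in $\mathfrak{X}$, and you justify the needed finiteness by "finite order of Schwartz distributions." This is the step you yourself flag, and it is a genuine gap as written: in the well-ordered index $\{1,\ldots,l\}\times\Z_{\geq 0}$, all stages $(j,k)$ with $j<i$ and $k\geq 0$ lie below $(i,0)$, so "$\xi$ lies in a finite stage" does not by itself produce a finite annihilating product $\prod_l(\rho(t)-\alpha_l^{-1}(t))$. To repair this you would need a uniform bound, stable under the homothety action and under subtracting graded images, on the transversal order of $\xi$ along each of the strata, which requires reopening the construction of the filtration in \cite{AG}; and separately you must resolve your own worry about possible twists by $|\cdot|^s$ in the identification of the graded pieces. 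Both issues are bypassed entirely by the $JH$ formalism, which is why the paper packages the argument that way. If you adopt the $JH$ lemma in place of the structural claim, your proof becomes the paper's.
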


For proof see section \ref{ProofOCheck}.

In order to prove homogeneity of invariant  distributions we will
use the following corollary of Frobenius reciprocity.

\begin{proposition}[Homogeneity criterion] \label{HomoCrit}
Let $G$ be a Lie group. Let $V$ be a finite dimensional
representation of $G$ over $F$. Let $C\subset V$ be a
$G$-invariant $G$-transitive smooth cone. Consider the actions of
$G \times F^{\times}$ on $V$, $C$ and $CN_C^V$, where $F^{\times}$
acts by homotheties. Let $\chi$ be a character of $G$. Let
$\alpha$ be a character of $F^{\times}$. Consider the character
$\chi ' := \chi \times \alpha^{-1}$ of $G \times F^{\times}$. Let
$x_0 \in C$ and denote $H:=Stab_G (x_0)$ and $H':=Stab_{G \times
F^{\times}} (x_0)$. Suppose that $G,H,H'$ are unimodular. Fix $k
\in Z_{\geq 0}$.

Then the space $\cD(C,Sym^k(CN_{C}^V))^{G,\chi}$ consists of
homogeneous distributions of type $\alpha$ if and only if
$$ (Sym^k(N_{C,{x_0}}^V) \otimes _{\R} \C)^{H,\chi}= (Sym^k(N_{C,{x_0}}^V) \otimes _{\R}
\C)^{H',\chi '}.$$
\end{proposition}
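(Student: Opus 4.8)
The plan is to derive the equivalence from Frobenius reciprocity (Theorem~\ref{Frob}), applied once for the group $G$ and once for the group $G\times F^{\times}$. The starting observation is that homogeneity of type $\alpha$ is nothing but equivariance under the homothety action: since $C$ is a cone, $F^{\times}$ acts on $C$ by homotheties and hence on the Nash bundle $Sym^k(CN_C^V)$, and by the definitions a distribution $\xi\in\cD(C,Sym^k(CN_C^V))^{G,\chi}$ is homogeneous of type $\alpha$ if and only if it is equivariant for this homothety action with the appropriate eigencharacter, i.e. if and only if $\xi\in\cD(C,Sym^k(CN_C^V))^{G\times F^{\times},\chi'}$. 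Note that this last space is automatically contained in $\cD(C,Sym^k(CN_C^V))^{G,\chi}$, since $\chi'$ restricts to $\chi$ on $G\times\{1\}$. Hence the assertion that $\cD(C,Sym^k(CN_C^V))^{G,\chi}$ consists of homogeneous distributions of type $\alpha$ is equivalent to the equality
\[
\cD(C,Sym^k(CN_C^V))^{G,\chi}=\cD(C,Sym^k(CN_C^V))^{G\times F^{\times},\chi'}.
\]

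Next I would apply Theorem~\ref{Frob} to the identity map $C\to C$, viewing $C$ first as the transitive $G$-space through $x_0$ with stabilizer $H$, and then as the transitive $G\times F^{\times}$-space through $x_0$ with stabilizer $H'$ (transitivity under the larger group is clear since $G$ already acts transitively). The groups $G$ and $H$ are unimodular by hypothesis, $G\times F^{\times}$ is unimodular as a product of unimodular groups, and $H'$ is unimodular by hypothesis, so Theorem~\ref{Frob} applies in both cases. Taking the closed invariant subset to be $\{x_0\}$ and the equivariant bundle to be $Sym^k(CN_C^V)$, using that distributions on a point valued in a finite dimensional real vector space $W$ form the space $W^{*}\otimes_{\R}\cc$, together with the characteristic-zero identification $(Sym^k(CN_{C,x_0}^V))^{*}\cong Sym^k((CN_{C,x_0}^V)^{*})=Sym^k(N_{C,x_0}^V)$, Theorem~\ref{Frob} yields canonical isomorphisms
\[
\cD(C,Sym^k(CN_C^V))^{G,\chi}\ \cong\ (Sym^k(N_{C,x_0}^V)\otimes_{\R}\cc)^{H,\chi},
\]
\[
\cD(C,Sym^k(CN_C^V))^{G\times F^{\times},\chi'}\ \cong\ (Sym^k(N_{C,x_0}^V)\otimes_{\R}\cc)^{H',\chi'}.
\]

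Finally I would check that these two isomorphisms are compatible with the evident inclusions: the square whose horizontal arrows are the inclusion $\cD(C,Sym^k(CN_C^V))^{G\times F^{\times},\chi'}\hookrightarrow\cD(C,Sym^k(CN_C^V))^{G,\chi}$ and the inclusion $(Sym^k(N_{C,x_0}^V)\otimes_{\R}\cc)^{H',\chi'}\hookrightarrow(Sym^k(N_{C,x_0}^V)\otimes_{\R}\cc)^{H,\chi}$, and whose vertical arrows are the two Frobenius isomorphisms, commutes. This holds because the Frobenius isomorphism extracts the ``germ of $\xi$ at $x_0$'', a construction depending only on $\xi$ and $x_0$ and not on the group used to witness equivariance — enlarging the acting group merely enlarges the symmetry of the resulting vector in the fiber. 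Granting this, the displayed equality of distribution spaces from the first paragraph translates, via the vertical isomorphisms, into $(Sym^k(N_{C,x_0}^V)\otimes_{\R}\cc)^{H,\chi}=(Sym^k(N_{C,x_0}^V)\otimes_{\R}\cc)^{H',\chi'}$, which is exactly the claim, and the equivalence runs in both directions symmetrically. The one step that genuinely requires care is this last compatibility, together with the precise handling of modular characters in the bundle-valued Frobenius reciprocity; but the unimodularity hypotheses on $G$, $H$ and $H'$ are imposed precisely so that no such twists intervene, so once the general form of Theorem~\ref{Frob} from section~\ref{SecFrob} is available the remaining verifications are routine.
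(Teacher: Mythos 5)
Your proof is correct and follows exactly the route the paper intends: Proposition \ref{HomoCrit} is introduced there as a corollary of Frobenius reciprocity, and your argument — reformulating homogeneity of type $\alpha$ as $(G\times F^{\times},\chi')$-equivariance and then applying Theorem \ref{Frob} to the identity map $C\to C$ for both $G$ and $G\times F^{\times}$, checking compatibility of the two isomorphisms via the Harish-Chandra restriction to the fiber — is precisely that derivation. No gaps.
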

\section{Proof of Theorem A for archimedean $F$}\label{proofs}

\noindent We will use the same notations as in the non-archimedean
case (see notation \ref{NotObjects}).
Again, the following theorem implies Theorem A.

\setcounter{lemma}{0}

\begin{theorem} \label{main}
$\cD(X)^{\tG,\chi}=0$.
\end{theorem}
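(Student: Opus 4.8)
The plan is to mirror the non-archimedean argument of Section \ref{p-adic} step by step, inserting the archimedean tools from Section \ref{Prel} at each point where the $\ell$-space reasoning breaks down. First I would reduce Theorem A to Theorem \ref{main} exactly as in \S\ref{SecReduction}: the passage from $\mathrm{GL}_{n+1}$ to $\mathrm{gl}_{n+1}$ uses the same localization-by-$(f\circ\det)$ trick (which is purely topological and works verbatim for $\cD$), and the identification $\mathrm{gl}_{n+1}\cong X_n\times F$ with trivial action on $F$ lets me invoke Proposition \ref{Product} in place of Proposition \ref{padic-Product}. So it suffices to prove $\cD(X)^{\tG,\chi}=0$ where $X=\mathrm{gl}_n\times V\times V^*$.

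Next I would stratify $X=\bigcup_i Z^i$ with $Z^i=Q^i\times W$, $Q^i$ the rank-$i$ locus, and apply Theorem \ref{Strat} (the archimedean stratification theorem) rather than Proposition \ref{padic-Strat}. This is the first real difference: it is not enough to kill $\cD(Z^i)^{\tG,\chi}$; I must kill $\cD(Z^i,\mathrm{Sym}^k(CN_{Z^i}^X))^{\tG,\chi}$ for every $k$. For each fixed $i$, the conormal bundle $CN_{Z^i}^X$ comes (via $\tG$-equivariant identifications, since the $V\times V^*$ directions are unaffected) from the conormal bundle to $Q^i$ inside $\mathrm{gl}_n$; and $\tG$ acts transitively on $Q^i$, so by Frobenius reciprocity (Theorem \ref{Frob}) one reduces $\cD(Z^i,\mathrm{Sym}^k(CN_{Z^i}^X))^{\tG,\chi}$ to $\cD(W,\mathrm{Sym}^k(CN^{\mathrm{gl}_n}_{Q^i,A_i}))^{\tG_{A_i},\chi}$ at the base point $A_i$, where $\tG_{A_i}=\mathrm{Stab}_{\tG}(A_i)$ — the stabilizers being unimodular by the same explicit computation as in the $p$-adic case. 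The symmetric power of the conormal space at $A_i$ is a fixed finite-dimensional representation $V_k$ of $\tG_{A_i}$, so this is $\cD(W,V_k)^{\tG_{A_i},\chi}$.

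To handle $\cD(W,V_k)^{\tG_{A_i},\chi}$ I would use Proposition \ref{Trick}: realize $V_k$ as $\mathrm{Sym}^k$ of an honest representation and reduce to showing $\cD(W\times U)^{\tG_{A_i},\chi}=0$ for a suitable open $\tG_{A_i}$-invariant $U$ on which the Haar measure is preserved, then pass from $\cD$ to Schwartz distributions $\Sc^*$ via Theorem \ref{NoSNoDist} so that Fourier transform becomes available. At this point the argument follows the $p$-adic one: for $i=n$ one uses that $\tG_{A_n}=\tH$ and the Key Lemma $\Sc^*(W)^{\tH,\chi}=0$ (whose archimedean proof goes through the $n=1$ case, the coordinate change $(x,y)\mapsto(xy,x/y)$ on $W\setminus Y$, and the archimedean orbit-check Proposition \ref{SimpleOrbitCheck} applied to the cross $Y\subset W$), combined with Proposition \ref{Product} and the $Y\cong Y'$ twisting trick to descend from $\widetilde{GL_n\times GL_n}$ to $\tG$; for $i<n$ one uses the canonical embedding $H_i\times G_{n-i}\hookrightarrow G_{A_i}$, the identification $W\cong W_i\times W_{n-i}$, and the corollary $\Sc^*(W_i\times W_{n-i})^{H_i\times H_{n-i}}=\Sc^*(W_i\times W_{n-i})^{\tH_i\times\tH_{n-i}}$ to upgrade $G_{A_i}$-invariance to $\tG_{A_i}$-invariance.

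**The main obstacle** I expect is the bookkeeping around the normal-bundle contributions that the $p$-adic proof simply does not see: one must check that $CN_{Z^i}^X$ really is $\tG$-equivariantly pulled back from $Q^i$ (so that Frobenius reciprocity applies cleanly to the twisted spaces), and, more seriously, that Proposition \ref{Trick} can be set up compatibly — i.e. that $\mathrm{Sym}^k(CN_{Q^i,A_i}^{\mathrm{gl}_n})$ as a $\tG_{A_i}$-representation is a symmetric power of something with $\tG_{A_i}$-invariant Haar measure, and that adding the auxiliary open set $U$ does not destroy the transitivity/homogeneity structure needed to run Proposition \ref{SimpleOrbitCheck}. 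Tracking the characters $\chi$ through all these reductions (especially the sign character under $\sigma$) is the place where a routine-looking step could hide the real work. Everything else is a faithful translation of Section \ref{p-adic} with $\cD$ in place of $\Sc^*$ and Theorems \ref{Strat}, \ref{Frob}, \ref{NoSNoDist} and Propositions \ref{Trick}, \ref{SimpleOrbitCheck} doing the jobs that were automatic for $\ell$-spaces.
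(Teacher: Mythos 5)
Your proposal follows the paper's own proof essentially step for step: stratification via Theorem \ref{Strat}, Frobenius reciprocity at the base point $A_i$, the Key Lemma proved by reduction to $n=1$, the cross, Theorem \ref{NoSNoDist} and Proposition \ref{SimpleOrbitCheck}, and Proposition \ref{Trick} (with the Killing-form identification $\mathrm{gl}_{n-i}^*\cong\mathrm{gl}_{n-i}$ and $U$ the invertible locus) to handle $\mathrm{Sym}^k$ of the conormal. The one correction: for $i<n$ the corollary you need is the bundle-valued one, $\cD(W_i\times W_{n-i},\mathrm{Sym}^k(0\times\mathrm{gl}_{n-i}^*))^{H_i\times G_{n-i}}=\cD(W_i\times W_{n-i},\mathrm{Sym}^k(0\times\mathrm{gl}_{n-i}^*))^{\tH_i\times\tG_{n-i}}$ rather than the scalar statement for $\tH_i\times\tH_{n-i}$ that you quote, since $G_{n-i}$ acts nontrivially on the conormal fiber; this is exactly what your Trick step, combined with Proposition \ref{Product}, supplies.
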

The implication is proven exactly in the same way as in the
non-archimedean case (subsection \ref{SecReduction}).

\subsection{Proof of theorem \ref{main}}
$ $

We will now stratify $X (=gl_n \times V \times V^*)$ and deal with
each strata separately.
\begin{notation}
Denote $W:=W_n:=V_n \oplus V_n^*.$ Denote by $Q^i:=Q^i_n \subset
\mathrm{gl}_n$ the set of all matrices of rank $i$. Denote
$Z^i:=Z^i_n:=Q^i_n \times W_n$.
\end{notation}
Note that $X = \bigcup Z^i$. Hence by theorem \ref{Strat}, it is
enough to prove the following proposition.

\begin{proposition} \label{Enough}
$\cD(Z^i, Sym^k(CN_{Z^i}^X ))^{\tG,\chi}=0$ for any $k$ and $i$.
\end{proposition}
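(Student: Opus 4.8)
The strategy is to imitate the non-archimedean proof of Proposition \ref{padic-Enough} step by step, correcting for three features of the real setting: (a) one must carry along the symmetric powers $Sym^k(CN)$ of conormal bundles, which never appear in the $\ell$-space argument (there, distributions supported on a submanifold are just distributions on the submanifold); (b) whenever Fourier transform enters one must pass between Schwartz and general distributions via Theorem \ref{NoSNoDist}; (c) the reduction to a fibre uses the bundle version of Frobenius reciprocity, Theorem \ref{Frob}. As in the $p$-adic case I would treat $i=n$ and $i<n$ separately, taking as an input the archimedean Key Lemma (the analogue of Lemma \ref{padic-key}, namely $\cD(W_n)^{\tH_n,\chi}=0$) together with a version of it carrying $Sym^k(\mathrm{gl})$-coefficients.

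For $i=n$: here $Q^n=\mathrm{GL}_n$ is open in $\mathrm{gl}_n$, so $CN_{Z^n}^X=0$ and only $k=0$ occurs, so I only need $\cD(Z^n)^{\tG,\chi}=0$. Exactly as in Corollary \ref{padic-in} I would extend the action to $\widetilde{\mathrm{GL}_n\times\mathrm{GL}_n}$, which acts transitively on $Q^n$, apply Frobenius reciprocity (Theorem \ref{Frob}) to $Z^n\to Q^n$ at the point $\mathrm{Id}$ (stabiliser $\tH$, fibre $W$) to get $\cD(Z^n)^{\widetilde{\mathrm{GL}_n\times\mathrm{GL}_n},\chi}\cong\cD(W)^{\tH,\chi}=0$ by the Key Lemma, and then descend back to $\tG$ by the same $F^\times$-twist as in Step 2 of Corollary \ref{padic-in} (introduce $Z^n\times F^\times$, untwist by $((A,v,\phi),\lambda)\mapsto((A,v,\phi),\lambda\det A^{-1})$, and invoke Proposition \ref{Product}). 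All of this works for $\cD$ verbatim, since Frobenius reciprocity and Proposition \ref{Product} hold for general distributions.

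For $i<n$: the action of $\tG$ on $Q^i$ is transitive (an arbitrary ratio $\det h_1/\det h_2$ is realised by scaling the lower-right block). Fix $A_i$ and put $G_{A_i}=\mathrm{Stab}_G(A_i)$, $\tG_{A_i}=\mathrm{Stab}_{\tG}(A_i)$; an explicit computation shows these are unimodular, that $G_{A_i}$ contains $H_i\times G_{n-i}$ inside its Levi, that $W\cong W_i\times W_{n-i}$ as an $H_i\times G_{n-i}$-space, and that $\sigma\in\tG_{A_i}$ with $\tG_{A_i}=G_{A_i}\cdot\langle\sigma\rangle$. Since $CN_{Z^i}^X$ is the pull-back along $pr_1:Z^i=Q^i\times W\to Q^i$ of $CN^{\mathrm{gl}_n}_{Q^i}$, the bundle form of Frobenius reciprocity gives
$$\cD(Z^i,Sym^k(CN_{Z^i}^X))^{\tG,\chi}\;\cong\;\cD(W,Sym^k(M))^{\tG_{A_i},\chi},\qquad M:=CN^{\mathrm{gl}_n}_{Q^i,A_i},$$
and under the identification $M\cong\mathrm{gl}_{n-i}$ the group $H_{n-i}$ acts on $M$ by the adjoint representation, $\sigma$ by transpose, and $H_i$ trivially. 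Now I would argue as in the final paragraph of the proof of Proposition \ref{padic-Enough}: $\chi$ is trivial on $H_i\times H_{n-i}\subseteq G_{A_i}$, so it is enough to know that every $(H_i\times H_{n-i})$-invariant $Sym^k(M)$-valued distribution on $W_i\times W_{n-i}$ is automatically $\sigma$-invariant — such a distribution, if it also transformed by $\chi$ under $\sigma$, would have to vanish. Since $Sym^k(M)\cong\C\boxtimes Sym^k(\mathrm{gl}_{n-i})$, Proposition \ref{Product} reduces this to $\cD(W_i)^{\tH_i,\chi}=0$ (the Key Lemma for $i$) and to $\cD(W_{n-i},Sym^k(\mathrm{gl}_{n-i}))^{\tH_{n-i},\chi}=0$ (the ``Key Lemma with $Sym^k(\mathrm{gl})$-coefficients'', which at $i=0$ is needed for $m=n$ itself).

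The heart of the matter — and the step I expect to be the real obstacle — is this coefficient-enhanced Key Lemma, which has no $p$-adic counterpart. I would establish it, for every $m\ge 1$ and $k\ge 0$, by the orbit method of Section \ref{Prel}: use the Bernstein-type trick Proposition \ref{Trick} (legitimate since $\tH_m$ preserves the Haar measure on $\mathrm{gl}_m$, both the adjoint action and transposition acting with determinant of absolute value $1$) to replace $\cD(W_m,Sym^k(\mathrm{gl}_m))^{\tH_m,\chi}$ by $\cD(W_m\times U)^{\tH_m,\chi}$ for a suitable open $\tH_m$-invariant $U\subseteq\mathrm{gl}_m$ (e.g. the regular semisimple or the invertible locus); pass to Schwartz distributions by Theorem \ref{NoSNoDist}; and then run the stratification-plus-Fourier argument of Propositions \ref{SimpleOrbitCheck} and \ref{HomoCrit}, checking on each $\tH_m$-stratum of $W_m\times U$ that the only invariant vectors in the relevant symmetric powers of the conormal bundles have the ``wrong'' homogeneity type (as in the $n=1$ analysis of Proposition \ref{padic-StratCross}, but now with coefficients). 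Carrying out these stratum-by-stratum homogeneity computations, and arranging the induction on $m$ so that it is not circular, is where the work lies.
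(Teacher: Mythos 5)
Your reduction for $i=n$ and your Frobenius-reciprocity setup for $i<n$ (including the identification of $CN^{\mathrm{gl}_n}_{Q^i,A_i}$ with $\mathrm{gl}_{n-i}^*$ and the observation that $\sigma$ stabilizes $A_i$) match the paper. The genuine gap is the coefficient-enhanced key lemma, which you correctly identify as the heart of the matter but then (a) leave entirely unproven, and (b) formulate in a needlessly hard way. You restrict the stabilizer $G_{A_i}\supset H_i\times G_{n-i}$ down to the diagonal $H_i\times H_{n-i}$, so that the statement you need becomes $\cD(W_{m},Sym^k(\mathrm{gl}_{m}))^{\tH_{m},\chi}=0$ with $H_{m}$ acting on the coefficients by the \emph{adjoint} action. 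Your plan to prove this --- apply Proposition \ref{Trick} with $U$ the invertible or regular semisimple locus and then stratify $W_{m}\times U$ under $\tH_{m}$ --- runs into the conjugation-orbit structure of $\mathrm{GL}_{m}$, which has infinitely many orbits of varying dimension; the stratum-by-stratum homogeneity computations you defer are not a routine extension of the $n=1$ cross analysis, and your own worry about circularity of the induction on $m$ is a symptom of this choice.

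The paper avoids all of this by \emph{not} discarding the $G_{n-i}$ part of the stabilizer. Its Corollary \ref{SymW} asserts $\cD(W_{m},Sym^k(\mathrm{gl}_{m}^*))^{\tG_{m},\chi}=0$, i.e.\ invariance only under the two-sided action, and the proof is two lines: identify $\mathrm{gl}_{m}^*$ with $\mathrm{gl}_{m}$ via the trace form, take $U$ to be the preimage of the invertible matrices, note that $W_{m}\times U$ is then exactly the stratum $Z^{m}_{m}$ already handled in the $i=n$ case (transitive $\widetilde{\mathrm{GL}\times\mathrm{GL}}$-action on $Q^m$, Frobenius, Key Lemma), and apply Proposition \ref{Trick}. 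No new stratification, no Fourier transform, no induction on $m$. The last corollary before the proof of Proposition \ref{Enough} then combines this with the scalar Key Lemma for $W_i$ via Proposition \ref{Product} to get $(H_i\times G_{n-i})$-invariance implies $(\tH_i\times\tG_{n-i})$-invariance, which is exactly what the final paragraph of the proof needs. You should replace your $\tH_{m}$-version of the coefficient lemma with this $\tG_{m}$-version; as stated, your proof is incomplete at its most important step.
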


We will use the following key lemma.
\begin{lemma} [Key Lemma] \label{key}
$\cD(W)^{\tH,\chi}=0.$
\end{lemma}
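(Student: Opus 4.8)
The plan is to imitate the proof of the non-archimedean key lemma (Lemma \ref{padic-key}), replacing the tools of Section \ref{p-adic} by their archimedean counterparts from Section \ref{Prel}. The first step is a reduction to $n=1$. Let $T\subset H_n$ be the diagonal torus and $\sigma\in\tH_n$ the generator of $S_2$; since $(t^{-1})^t=t^{-1}$ for diagonal $t$, $\sigma$ normalizes $T$, so $\widetilde T:=T\rtimes\langle\sigma\rangle$ is a subgroup of $\tH_n$ and $\cD(W_n)^{\tH_n,\chi}\subseteq\cD(W_n)^{\widetilde T,\chi}$. Writing $W_n=\bigoplus_{j=1}^n W_1$ with $T=\prod_{j=1}^n F^\times$ acting factorwise, the case $n=1$ of the lemma is equivalent to the statement $\cD(W_1)^{F^\times}=\cD(W_1)^{\tH_1}$ (that is, the $\sigma$-anti-invariant part of $\cD(W_1)^{F^\times}$ vanishes). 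Proposition \ref{Product} then upgrades this to $\cD(W_n)^{T}=\cD(W_n)^{T\rtimes S_2^n}$, so every $T$-invariant distribution on $W_n$ is in particular invariant under the diagonal $\sigma$; hence $\cD(W_n)^{\widetilde T,\chi}=0$. It remains to treat $n=1$, where $W_1=F^2$, $\lambda\cdot(x,y)=(\lambda x,\lambda^{-1}y)$ and $\sigma(x,y)=(y,x)$.

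For $n=1$, Theorem \ref{NoSNoDist} (applied to $\tH_1$ acting on $W_1$, twisted by the one-dimensional representation $\chi$) reduces us to showing $\Sc^*(W_1)^{\tH_1,\chi}=0$, which I would deduce from Proposition \ref{SimpleOrbitCheck}. I take the $\tH_1$-invariant nondegenerate form $B((x,y),(x',y'))=xy'+x'y$ and the stratification of $W_1$ into the $\tH_1$-invariant Nash cones $S_1=\{0\}$, $S_2=\{xy=0\}\setminus\{0\}$ (two punctured lines, interchanged by $\sigma$) and $S_3=\{xy\neq 0\}$. For $S_1$ the relevant spaces are $\Sc^*(\{0\},Sym^k(CN_{\{0\}}^{W_1}))=Sym^k(W_1^*)$, whose $F^\times$-invariants are spanned by powers of $xy$ (for $F=\cc$, by monomials in $xy$ and $\overline{xy}$), all fixed by $\sigma$, so the $\chi$-part is $0$. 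For $S_3$, the $\tH_1$-equivariant isomorphism $(x,y)\mapsto(xy,x/y)$ identifies it with $F^\times\times F^\times$ on which $\tH_1$ acts only on the second factor, by $\lambda\cdot w=\lambda^2 w$ and $\sigma\cdot w=w^{-1}$; the $F^\times$-invariant distributions there are Haar-type and invariant under inversion, so $\Sc^*(F^\times)^{\tH_1,\chi}=0$, whence $\Sc^*(S_3)^{\tH_1,\chi}=0$ by the slicing argument of Proposition \ref{Product} (and $CN_{S_3}^{W_1}=0$, so only $k=0$ occurs).

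The substantive case is $S_2$. Here $\tH_1$ acts transitively with trivial stabilizer, so I would apply the homogeneity criterion (Proposition \ref{HomoCrit}) at $x_0=(1,0)$, where $H=\mathrm{Stab}_{\tH_1}(x_0)=\{1\}$ and $H'=\mathrm{Stab}_{\tH_1\times F^\times}(x_0)=\{(t^{-1},t):t\in F^\times\}$ acts on the normal space $N_{S_2,x_0}^{W_1}$ (the $y$-line) by $t\mapsto t^2$. A short computation then shows that $\cD(S_2,Sym^k(CN_{S_2}^{W_1}))^{\tH_1,\chi}$ consists of distributions homogeneous of type $\alpha$ with $|\alpha|=|\cdot|^{-2k}$ (a single type up to a sign twist when $F=\rr$; for $F=\cc$, finitely many types, all of this absolute value). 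Letting $\mathfrak X$ be the set of all these types over all $k\geq 0$, every element of $\mathfrak X\cdot\mathfrak X$ has absolute value a nonpositive power of $|\cdot|$, while $|\cdot|^{\dim_\rr W_1}$ is a positive power; hence $|\cdot|^{\dim_\rr W_1}\notin\mathfrak X\cdot\mathfrak X$, and Proposition \ref{SimpleOrbitCheck} yields $\Sc^*(W_1)^{\tH_1,\chi}=0$.

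The main obstacle is exactly the analysis of the stratum $S_2$: unlike the $\ell$-adic case, where distributions supported on $Y'$ are genuine distributions on $Y'$ (only $k=0$) of a single homogeneity type, here one must control all transverse jets $Sym^k(CN_{S_2}^{W_1})$ simultaneously, and for $F=\cc$ the conormal bundle is two-dimensional, so several homogeneity types appear — the saving point being that they all carry the same weight $-2k$, which stays out of the range excluded by $\dim_\rr W_1$.
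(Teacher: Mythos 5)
Your proposal is correct and follows essentially the same route as the paper: reduction to $n=1$ via the diagonal torus and Proposition \ref{Product}, passage to Schwartz distributions via Theorem \ref{NoSNoDist}, and an application of Proposition \ref{SimpleOrbitCheck} to the strata $\{0\}$, $Y'$, $W\setminus Y$, with the homogeneity criterion (Proposition \ref{HomoCrit}) handling the cross. The only noteworthy difference is in the treatment of the several homogeneity types over $F=\cc$: where you enlarge $\mathfrak{X}$ and argue via absolute values, the paper instead uses $\R$-homogeneity (restricting the homothety action to $\R^{\times}$), so that each stratum carries the single type $\alpha_k(\lambda)=\lambda^{-2k}$ and Proposition \ref{SimpleOrbitCheck} applies verbatim.
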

For proof see subsection \ref{ProofKey} below.

\begin{corollary}
Proposition \ref{Enough} holds for $i=n$.
\end{corollary}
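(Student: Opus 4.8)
The plan is to follow the non-archimedean argument of Corollary \ref{padic-in} essentially verbatim, with distributions on Nash manifolds in place of Schwartz distributions on $l$-spaces, and Theorem \ref{Frob} in place of the non-archimedean Frobenius reciprocity. The first step is a cheap reduction: since $Q^n=\mathrm{GL}_n$ is open in $\mathrm{gl}_n$, the stratum $Z^n=Q^n\times W$ is open in $X$, so the conormal bundle $CN_{Z^n}^X$ has rank $0$; hence $Sym^k(CN_{Z^n}^X)=0$ for $k\geq 1$ while $Sym^0(CN_{Z^n}^X)$ is the trivial line bundle, and the case $i=n$ of Proposition \ref{Enough} is exactly the assertion $\cD(Z^n)^{\tG,\chi}=0$.

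To prove this I would first enlarge the group: let $\widetilde{\mathrm{GL}_n\times\mathrm{GL}_n}:=(\mathrm{GL}_n\times\mathrm{GL}_n)\rtimes S_2$ act on $Q^n$ and $Z^n$ by the evident extension of the $\tG$-action. \emph{Step 1}: $\cD(Z^n)^{\widetilde{\mathrm{GL}_n\times\mathrm{GL}_n},\chi}=0$. Apply Theorem \ref{Frob} to the equivariant projection $Z^n\to Q^n=\mathrm{GL}_n$. The group $\widetilde{\mathrm{GL}_n\times\mathrm{GL}_n}$ acts transitively on $\mathrm{GL}_n$ (via $(h_1,h_2)A=h_1Ah_2^{-1}$ and $\sigma A=A^t$); it is unimodular; the stabilizer of $\mathrm{Id}$ is $\tH$, again unimodular; and the fiber over $\mathrm{Id}$ is $W$. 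Hence $\cD(Z^n)^{\widetilde{\mathrm{GL}_n\times\mathrm{GL}_n},\chi}\cong\cD(W)^{\tH,\chi}$, which vanishes by the Key Lemma (Lemma \ref{key}).

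\emph{Step 2}: descend from $\widetilde{\mathrm{GL}_n\times\mathrm{GL}_n}$ to $\tG$. The mild subtlety is that $\tG$ is not transitive on $Q^n$ (the determinant is invariant), so one cannot apply Frobenius directly; instead put $Y:=Z^n\times F^\times$ with $\mathrm{GL}_n\times\mathrm{GL}_n$ acting by $(h_1,h_2)(z,\lambda):=((h_1,h_2)z,\det h_1(\det h_2)^{-1}\lambda)$ and $\sigma$ acting trivially on the $F^\times$-factor. The projection $Y\to F^\times$ is equivariant and transitive, the stabilizer of $1$ is $\tG$ (unimodular), and the fiber over $1$ is $Z^n$, so Theorem \ref{Frob} gives $\cD(Y)^{\widetilde{\mathrm{GL}_n\times\mathrm{GL}_n},\chi}\cong\cD(Z^n)^{\tG,\chi}$. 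On the other hand, the twist $((A,v,\phi),\lambda)\mapsto((A,v,\phi),\lambda(\det A)^{-1})$ is an isomorphism of $Y$ onto the space $Y'$ on which $\widetilde{\mathrm{GL}_n\times\mathrm{GL}_n}$ acts trivially on $F^\times$; and for $Y'$, Proposition \ref{Product} (concretely: fix a test function on the $F^\times$-factor, reduce to Step 1, and use density of $C_c^\infty(Z^n)\otimes C_c^\infty(F^\times)$ in $C_c^\infty(Y')$) shows $\cD(Y')^{\widetilde{\mathrm{GL}_n\times\mathrm{GL}_n},\chi}=0$. Combining the two identifications yields $\cD(Z^n)^{\tG,\chi}=0$.

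I do not expect a genuine obstacle here: the content is purely a transcription of the $p$-adic argument, since the two archimedean tools it relies on — Frobenius reciprocity (Theorem \ref{Frob}) and the product proposition (Proposition \ref{Product}) — are already available and the Key Lemma is taken as input, so everything stays inside $\cD(\cdot)$ with no need for Schwartz distributions. The only points that demand a moment of care are the unimodularity of $\mathrm{GL}_n\times\mathrm{GL}_n$, $G$, $\tH$, $\tG$ and $\widetilde{\mathrm{GL}_n\times\mathrm{GL}_n}$ (immediate, as $\mathrm{GL}_n\times\mathrm{GL}_n$ and its normal subgroup $G$ are unimodular and $S_2$ is finite) and the check that the twist $\lambda\mapsto\lambda(\det A)^{-1}$ is equivariant for both the $\mathrm{GL}_n\times\mathrm{GL}_n$- and the $S_2$-action.
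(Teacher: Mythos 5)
Your proof is correct and is exactly the argument the paper intends: the paper simply says the proof is the same as in the non-archimedean case (Corollary \ref{padic-in}), and you have transcribed that two-step Frobenius argument into the archimedean setting using Theorem \ref{Frob} and Proposition \ref{Product}. The preliminary observation that $Z^n$ is open in $X$, so $CN_{Z^n}^X$ is the zero bundle and only $k=0$ contributes, is the right (if slightly implicit in the paper) reduction to $\cD(Z^n)^{\tG,\chi}=0$.
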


The proof is the same as in the non-archimedean case (corollary
\ref{padic-in}).

\begin{corollary} \label{SymW}
$\cD(W_n, Sym^k(\mathrm{gl}_n^*))^{\tG,\chi}=0.$
\end{corollary}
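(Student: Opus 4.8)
The plan is to derive the corollary from Proposition~\ref{Trick} together with the already-established case $i=n$ of Proposition~\ref{Enough} (equivalently, $\cD(Z^n)^{\tG,\chi}=0$). Explicitly, I would apply Proposition~\ref{Trick} with the group $\tG$, the manifold $X:=W_n$, the representation $V:=\mathrm{gl}_n^*$ (viewed as a real representation when $F=\C$) and the character $\chi$. This reduces the problem to producing an open, non-empty, $\tG$-invariant subset $U\subset\mathrm{gl}_n^*$ with $\cD(W_n\times U)^{\tG,\chi}=0$, provided one first checks that $\tG$ preserves the Haar measure on $\mathrm{gl}_n^*$.

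Checking the Haar-measure hypothesis is the one place where the defining condition $\det h_1=\det h_2$ of $G$ is used: the action of $(h_1,h_2)\in G$ on $\mathrm{gl}_n$ is $A\mapsto h_1Ah_2^{-1}$, with Jacobian $|\det h_1/\det h_2|^{\,n}=1$ (the exponent is doubled if $F=\C$), and $\sigma$ acts by transposition, which is volume-preserving; hence $\tG$ acts on $\mathrm{gl}_n$, and so on $\mathrm{gl}_n^*$, by measure-preserving maps. For $U$ I would take the set of non-degenerate elements of $\mathrm{gl}_n^*$; identifying $\mathrm{gl}_n^*$ with $\mathrm{gl}_n$ via $\langle A,B\rangle=\tr(AB)$, this $U$ becomes $GL_n(F)$ and the $\tG$-action becomes $(h_1,h_2)\colon B\mapsto h_2Bh_1^{-1}$, $\sigma\colon B\mapsto B^t$, so $U$ is indeed open, non-empty and $\tG$-invariant.

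Finally I would identify $W_n\times U$ with $Z^n=Q^n\times W_n$ as $\tG$-manifolds via $(v,\phi,B)\mapsto (B^{-1},v,\phi)$: inversion intertwines $B\mapsto h_2Bh_1^{-1}$ with $C\mapsto h_1Ch_2^{-1}$ and commutes with transposition, while the $W_n$-coordinates are carried along unchanged. Consequently $\cD(W_n\times U)^{\tG,\chi}\cong\cD(Z^n)^{\tG,\chi}=0$ by the case $i=n$ of Proposition~\ref{Enough}, which ultimately rests on the Key Lemma~\ref{key}; Proposition~\ref{Trick} then gives $\cD(W_n,Sym^k(\mathrm{gl}_n^*))^{\tG,\chi}=0$. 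The only steps with any real content are the Jacobian computation and the $\tG$-equivariance of $(v,\phi,B)\mapsto(B^{-1},v,\phi)$, both short; a more self-contained but longer alternative would be to prove $\cD(W_n\times GL_n(F))^{\tG,\chi}=0$ directly by repeating the $F^\times$-twist Frobenius argument used for the $i=n$ case, but routing through Proposition~\ref{Trick} and the already-proven $i=n$ case is the most economical path.
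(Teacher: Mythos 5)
Your argument is correct and is essentially the paper's proof: the paper also identifies $\mathrm{gl}_n^*$ with $\mathrm{gl}_n$ via an invariant non-degenerate form (it says ``Killing form,'' but what is actually used is the trace form, as in your write-up), sets $U$ to be the preimage of $Q^n$, shows $\cD(W_n\times U)^{\tG,\chi}=0$, and invokes Proposition~\ref{Trick}. The only cosmetic difference is that where the paper says ``in the same way as in the previous corollary one can show $\cD(W_n\times U)^{\tG,\chi}=0$,'' you instead exhibit an explicit $\tG$-isomorphism $W_n\times U\cong Z^n$ via $B\mapsto B^{-1}$ and cite the already-proved $i=n$ case; you also spell out the Haar-measure/Jacobian check needed for Proposition~\ref{Trick}, which the paper leaves implicit. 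Both of those are fine, and if anything make the argument more self-contained.
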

\begin{proof}
Consider the Killing form $K:\mathrm{gl}_n^* \to \mathrm{gl}_n$. Let
$U:=K^{-1}(Q^n_n)$. In the same way as in the previous corollary one
can show that $\cD(W_n \times U)^{\tG,\chi}=0$. Hence by proposition
\ref{Trick}, $\cD(W_n, Sym^k(\mathrm{gl}_n^*))^{\tG,\chi}=0.$
\end{proof}

\begin{corollary}
We have $$\cD(W_i \times W_{n-i},Sym^k(0 \times
\mathrm{gl}_{n-i}^*))^{H_i \times G_{n-i}} = \cD(W_i \times
W_{n-i},Sym^k(0 \times \mathrm{gl}_{n-i}^*))^{\tH_i \times
\tG_{n-i}}.$$
\end{corollary}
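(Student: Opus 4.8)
The plan is to deduce this equality from Proposition \ref{Product} together with the Key Lemma \ref{key} and Corollary \ref{SymW}, using the standard device that upgrades $G$-invariance to $\tG$-invariance whenever the ``sign-twisted'' invariant space vanishes.

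First I would record the following elementary observation: if a Lie group $\tG$ acts on a smooth manifold $X$, if $E\to X$ is a $\tG$-equivariant bundle, and if $\tG = G\rtimes S_2$ with $S_2=\{1,\sigma\}$ and $\chi$ the character of $\tG$ trivial on $G$ and equal to $\sgn$ on $S_2$, then $\cD(X,E)^{\tG,\chi}=0$ implies $\cD(X,E)^{G}=\cD(X,E)^{\tG}$. Indeed, for $\xi\in\cD(X,E)^{G}$ the element $\sigma\cdot\xi$ (formed via the equivariant structure of $E$) is again $G$-invariant since $G$ is normal in $\tG$; hence $\xi-\sigma\cdot\xi\in\cD(X,E)^{\tG,\chi}=0$, so $\xi=\sigma\cdot\xi$ is $\tG$-invariant, the reverse inclusion being trivial.

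I would then apply this observation twice. Applying the Key Lemma \ref{key} with $n$ replaced by $i$ gives $\cD(W_i)^{\tH_i,\chi}=0$, hence $\cD(W_i)^{H_i}=\cD(W_i)^{\tH_i}$ (here $E$ is the trivial line bundle). Applying Corollary \ref{SymW} with $n$ replaced by $n-i$ gives $\cD(W_{n-i},Sym^k(\mathrm{gl}_{n-i}^*))^{\tG_{n-i},\chi}=0$, hence $\cD(W_{n-i},Sym^k(\mathrm{gl}_{n-i}^*))^{G_{n-i}}=\cD(W_{n-i},Sym^k(\mathrm{gl}_{n-i}^*))^{\tG_{n-i}}$. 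The endpoint cases $i=0$ and $i=n$ are immediate, as one of the two factors degenerates to a point.

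Finally, I would observe that $Sym^k(0\times\mathrm{gl}_{n-i}^*)$, viewed as a $\tH_i\times\tG_{n-i}$-equivariant bundle on $W_i\times W_{n-i}$, is precisely the external product of the trivial line bundle on $W_i$ with the bundle $Sym^k(\mathrm{gl}_{n-i}^*)$ on $W_{n-i}$. Hence Proposition \ref{Product}, applied to the pairs $H_i\subset\tH_i$ acting on $W_i$ and $G_{n-i}\subset\tG_{n-i}$ acting on $W_{n-i}$ with these two bundles, yields exactly the asserted equality. The only point requiring (routine) care is this identification of the bundle as an external product and the verification that $\sigma$ acts on it compatibly with the $\tG_{n-i}$-structure used in Corollary \ref{SymW}; there is no genuine obstacle, since all the analytic content is already packaged in \ref{key} and \ref{SymW}.
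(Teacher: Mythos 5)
Your proof is correct and takes essentially the same route as the paper, which cites exactly the Key Lemma, Corollary \ref{SymW}, and Proposition \ref{Product}; you have merely made explicit the routine step (vanishing of the $\chi$-twisted invariant space implies agreement of $G$- and $\tG$-invariants) and the identification of $Sym^k(0\times\mathrm{gl}_{n-i}^*)$ as an external product, both of which the paper leaves to the reader.
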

\begin{proof}
It follows from the key lemma, the last corollary and proposition
\ref{Product}.
\end{proof}

Now we are ready to prove proposition \ref{Enough}.
\begin{proof}[Proof of proposition \ref{Enough}]
Fix $i<n$. Consider the projection $pr_1:Z^i \to Q^i$. It is easy to
see that the action of $\tG$ on $Q^i$ is transitive. Denote $$A_i :=
\left(
  \begin{array}{cc}
    Id_{i\times i} & 0 \\
    0 & 0 \\
  \end{array}
\right) \in Q^i.$$ Denote by $G_{A_i}:= \mathrm{Stab}_G(A_i)$ and
$\tG_{A_i}:= \mathrm{Stab}_{\tG}(A_i)$. Note that they are
unimodular. By Frobenius reciprocity (theorem \ref{Frob}),
$$\cD(Z^i,Sym^k(CN_{Z^i}^X
))^{\tG,\chi}=\cD(W,Sym^k(CN_{Q^i,A_i}^{\mathrm{gl}_n}))^{\tG_{A_i},\chi}.$$
Hence it is enough to show that
$$\cD(W,Sym^k(CN_{Q^i,A_i}^{\mathrm{gl}_n}))^{G_{A_i}}=\cD(W,Sym^k(CN_{Q^i,A_i}^{\mathrm{gl}_n}))^{\tG_{A_i}}.$$

It is easy to check by explicit computation that
\begin{itemize}
\item
$H_i \times G_{n-i}$ is canonically
embedded into $G_{A_i}$, 
\item
$W$ is isomorphic to $W_i \times W_{n-i}$ as $H_i \times
G_{n-i}$-spaces
\item
$CN_{Q^i,A_i}^{\mathrm{gl}_n}$ is isomorphic to $0 \times
\mathrm{gl}_{n-i}^*$ as $H_i \times G_{n-i}$ representations.
\end{itemize}
Let $\xi \in
\cD(W,Sym^k(CN_{Q^i,A_i}^{\mathrm{gl}_n}))^{G_{A_i}}$. By the
previous corollary, $\xi$ is $\tH_i \times \tG_{n-i}$-invariant.
Since $\xi$ is also $G_{A_i}$-invariant, it is
$\tG_{A_i}$-invariant.
\end{proof}
\subsection{Proof of the key lemma (lemma \ref{key})} \label{ProofKey}
$ $

As in the non-archimedean case, it is enough to prove the key
lemma for $n=1$ (see proposition \ref{1Enough}).

From now on we fix $n:=1$, $H:=H_1$, $\tH:=\tH_1$ and $W:=W_1$.
Note that $H = F^{\times}$ and $W=F^2$. The action of $H$ is given
by $\rho(\lambda)(x,y):=(\lambda x, \lambda^{-1} y)$ and extended
to the action of $\tH$ by the involution $\sigma(x,y) = (y,x)$.

Let $Y:=\{(x,y)\in  F^2| \, xy=0\} \subset W$ be the {\bf cross}
and $Y':=Y \setminus \{0\}$.
\begin{lemma}
Every $(\tH,\chi)$-equivariant distribution on $W$ is supported
inside the cross $Y$.
\end{lemma}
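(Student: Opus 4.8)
The plan is to show that the complement $U := W \setminus Y$ carries no nonzero $(\tH,\chi)$-equivariant distribution, since then any $\xi \in \cD(W)^{\tH,\chi}$ restricts to zero on the open set $U$ and hence is supported in the closed set $Y$. This is the exact archimedean analogue of part (iii) of proposition \ref{padic-StratCross}, so I will follow the same computation, but I must be careful to use only genuine distributions here (no Fourier transform is needed for this particular step, so there is no issue with $\cD$ versus $\Sc^*$).

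First I would introduce the coordinate change $U \cong F^{\times} \times F^{\times}$ given by $(x,y) \mapsto (xy, x/y)$; this is a Nash isomorphism of $\tH$-spaces. Under it, the $H = F^{\times}$-action $\rho(\lambda)(x,y) = (\lambda x, \lambda^{-1} y)$ becomes trivial on the first coordinate and $w \mapsto \lambda^2 w$ on the second, while the involution $\sigma(x,y) = (y,x)$ becomes $(u,w) \mapsto (u, w^{-1})$. Thus it suffices to show $\cD(F^{\times} \times F^{\times})^{\tH,\chi} = 0$ where $\tH$ acts only on the second factor.

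Next, by proposition \ref{Product} it is enough to prove $\cD(F^{\times})^{\tH,\chi} = 0$ for the action $\lambda \cdot w = \lambda^2 w$, $\sigma(w) = w^{-1}$, with $\chi(\sigma) = -1$ (and $\chi$ trivial on $H$). Here $F^{\times}$ with the squaring action of $F^{\times}$ is a transitive $\tH$-space: every point can be moved to $1$ by a suitable $\lambda$ (using that squares are an open finite-index subgroup, together with $\sigma$ if $F = \R$, or directly since every element of $\C^\times$ is a square). So by Frobenius reciprocity (theorem \ref{Frob}) the space $\cD(F^\times)^{\tH,\chi}$ is identified with $\cD(\{1\})^{\stab,\chi}$, where the stabilizer of $1$ contains $\sigma$; since $\chi(\sigma) = -1 \neq 1$, this space of equivariant distributions on a point vanishes. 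Alternatively, one checks directly that an $H$-invariant distribution on $F^\times$ is a multiple of the Haar measure, which is $\sigma$-invariant, hence cannot be $\chi$-equivariant unless it is zero.

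**The main obstacle** is a minor point of rigor rather than a real difficulty: one must confirm that the squaring action of $F^\times$ on $F^\times$ is transitive as a $\tH$-action (for $F = \R$ this uses $\sigma$ to flip sign, since the squares are only the positive reals) so that Frobenius reciprocity applies, and one must verify that the relevant stabilizers are unimodular — but they are abelian, so this is immediate. Everything else is a direct transcription of the non-archimedean argument, with $\Sc^*$ replaced by $\cD$ throughout.
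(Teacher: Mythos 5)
Your overall plan is exactly the paper's: the paper's proof of this lemma simply says it is ``identical to the proof of proposition \ref{padic-StratCross}, (iii),'' i.e.\ change coordinates on $U = W \setminus Y$ to $F^\times \times F^\times$ via $(x,y) \mapsto (xy, x/y)$, observe that $\tH$ acts only on the second factor, and reduce by proposition \ref{Product} to showing $\cD(F^\times)^{\tH,\chi}=0$. So far so good, and you are right that no Fourier transform is needed here, so one can stay with $\cD$ rather than $\Sc^*$.

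However, your elaboration of the last step contains a genuine error. You claim that $F^\times$ with the action $\lambda\cdot w = \lambda^2 w$, $\sigma(w) = w^{-1}$ is a \emph{transitive} $\tH$-space, ``using $\sigma$ to flip sign'' when $F=\R$. But $\sigma(w)=w^{-1}$ does \emph{not} flip sign: $w$ and $w^{-1}$ have the same sign, so for $F=\R$ the $\tH$-orbits on $\R^\times$ are $\R_{>0}$ and $\R_{<0}$, and the action is not transitive. Consequently the Frobenius-reciprocity step as you wrote it does not apply directly. Your fallback argument is also slightly off: for $F=\R$ the space of $H$-invariant distributions on $\R^\times$ is two-dimensional (Haar measure on each half-line), not one-dimensional. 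The fix is easy and keeps the spirit of what you wrote: the two $\tH$-orbits are open, so $\cD(\R^\times) = \cD(\R_{>0}) \oplus \cD(\R_{<0})$, and on each open orbit one applies Frobenius reciprocity (or observes directly that the unique $H$-invariant distribution there is the multiplicative Haar measure, which is invariant under inversion $\sigma$); since $\chi(\sigma)=-1$, each summand has no $(\tH,\chi)$-equivariant distribution. With that correction the argument is complete and coincides with the paper's.
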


The proof of this lemma is identical to the proof of proposition
\ref{padic-StratCross}, (iii).

To apply proposition \ref{SimpleOrbitCheck} (which uses Fourier
transform) we need to restrict our consideration to Schwartz
distributions.
By theorem \ref{NoSNoDist}, in order to show that
$\cD_W(Y)^{\tH,\chi} = 0$ it is enough to show that
$\Sc^*(W)^{\tH,\chi}=0$ \footnote{Alternatively, one can show that
any $H$-invariant distribution on $W$ supported at $Y$ is a
Schwartz distribution since $Y$ has finite number of orbits.}. By
proposition \ref{SimpleOrbitCheck}, it is enough to prove the
following proposition.
\begin{proposition} $ $\\
(i) $\Sc^*(W\setminus Y)^{\tH,\chi}=0$.\\
(ii) For all $k \in \Z_{\geq 0}$, any distribution $\xi \in
\Sc^*(Y', Sym^k(CN_{Y'}^W))^{\tH,\chi}$ is $\R$-homogeneous
of type $\alpha_k$ where $\alpha_k(\lambda):=\lambda^{-2k}$.\\
(iii) $\Sc^*(\{0\},Sym^k(CN_{\{0\}}^W))^{\tH,\chi}=0$.
\end{proposition}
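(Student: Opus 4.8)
The plan is to handle the three parts independently, keeping everything within the realm of Schwartz distributions so that Proposition~\ref{SimpleOrbitCheck} can be applied afterwards. Part~(iii) is linear algebra: since $\{0\}$ is a point, $\Sc^*(\{0\},Sym^k(CN_{\{0\}}^W))$ is just the $\tH$-representation $Sym^k(W^*)\otimes_\R\C$, which we must show has no $\chi$-equivariant vector. The torus $H=F^\times$ acts on $Sym^k(W^*)$ diagonally with weights $\lambda\mapsto\lambda^{b-a}$, $a+b=k$, so its fixed subspace is the line spanned by the monomial with $a=b$ when $k$ is even, and is $0$ when $k$ is odd; in the even case $\sigma$ acts on that line by $+1$ (it interchanges the two coordinates) while $\chi(\sigma)=-1$, so there are no $(\tH,\chi)$-equivariant vectors. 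Part~(i) is proved exactly as its non-archimedean analogue, Proposition~\ref{padic-StratCross}(iii): the change of variables $(x,y)\mapsto(xy,x)$ identifies $W\setminus Y$ with $F^\times\times F^\times$ so that $H$ acts simply transitively on the second factor while $\sigma$ preserves each $H$-orbit; hence every $H$-invariant Schwartz distribution on $W\setminus Y$ is automatically $\sigma$-invariant, and since $\chi(\sigma)=-1$ we get $\Sc^*(W\setminus Y)^{\tH,\chi}=0$. (One may instead apply Frobenius reciprocity, Theorem~\ref{Frob}, to the $H$-equivariant projection onto the second factor, together with Proposition~\ref{Product}.)

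Part~(ii) is the only place the conormal twist plays a role, and here we use the homogeneity criterion, Proposition~\ref{HomoCrit}, for the action of $\tH\times\R^\times$ on $W$ with $\R^\times$ acting by homotheties. (The criterion is stated for $F^\times$-homotheties, but the same argument gives the $\R^\times$-version, which is what is needed when $F=\C$, where for $k\geq1$ only $\R$-homogeneity can hold.) First, $\tH$ acts transitively on the cone $Y'$: the $H$-orbit of $x_0:=(1,0)$ fills the $x$-arm and $\sigma$ carries it onto the $y$-arm. At $x_0$ one computes $\mathrm{Stab}_{\tH}(x_0)=\{1\}$ and $\mathrm{Stab}_{\tH\times\R^\times}(x_0)=\{(\rho(t^{-1}),t):t\in\R^\times\}\cong\R^\times$, both unimodular (as is $\tH$). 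The normal line $N_{Y',x_0}^W=T_{x_0}W/T_{x_0}Y'$ is represented by the $y$-direction, and $(\rho(t^{-1}),t)$ acts on $W$ by $(x,y)\mapsto(x,t^2y)$, hence on $Sym^k(N_{Y',x_0}^W)\otimes_\R\C$ by the character $t\mapsto t^{2k}$; this agrees on $\mathrm{Stab}_{\tH\times\R^\times}(x_0)$ with $\chi':=\chi\times\alpha_k^{-1}$, since $\chi(\rho(t^{-1}))=1$ and $\alpha_k(\lambda)=\lambda^{-2k}$. As $\mathrm{Stab}_{\tH}(x_0)$ is trivial, both sides of the equality in Proposition~\ref{HomoCrit} equal the full space $Sym^k(N_{Y',x_0}^W)\otimes_\R\C$, so the criterion gives that every $\xi\in\Sc^*(Y',Sym^k(CN_{Y'}^W))^{\tH,\chi}$ is $\R$-homogeneous of type $\alpha_k$.

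The one step requiring genuine care is the stabilizer action on the normal space in part~(ii), and in particular getting the exponent $2k$ right: the factor $2$ appears because pinning down $x_0$ forces the homothety $t$ to be paired with $\rho(t^{-1})$, and both scale the normal direction by $t$; once the stabilizer character is matched with $\chi'$ the conclusion is immediate. Everything else is routine. Finally, taking $\mathfrak{X}:=\{\alpha_k:k\geq0\}$ --- whose pairwise products are the characters $\lambda\mapsto|\lambda|^{-2m}$, $m\geq0$, and hence never equal $|\cdot|^{\dim_\R W}$ --- the three parts feed into Proposition~\ref{SimpleOrbitCheck} to yield $\Sc^*(W)^{\tH,\chi}=0$, which is what the key lemma requires.
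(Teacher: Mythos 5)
Your proof is correct and takes essentially the same route as the paper: part (i) via a coordinate change on the complement of the cross (you use $(x,y)\mapsto(xy,x)$ where the paper uses $(x,y)\mapsto(xy,x/y)$; either works), part (ii) via the homogeneity criterion (Proposition~\ref{HomoCrit}) with the same stabilizer $\{(\rho(t^{-1}),t)\}\cong\R^{\times}$ acting on the normal line by $t^{2}$, and part (iii) by the direct weight computation. One cosmetic slip in (iii): $\Sc^*(\{0\},Sym^k(CN_{\{0\}}^W))$ is the dual of the fiber, hence $Sym^k(W)\otimes_\R\C$ rather than $Sym^k(W^*)\otimes_\R\C$; since $W\cong W^*$ as $\tH$-modules here the weight analysis and conclusion are unaffected.
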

\begin{proof}
We have proven (i) in the proof of the previous lemma.\\
(ii) Fix $x_0:=(1,0)\in Y'$. Now we want to use the homogeneity
criterion (proposition \ref{HomoCrit}). Note that
$Stab_{\tH}(x_0)$ is trivial and $Stab_{\tH \times
\R^{\times}}(x_0) \cong \R^{\times}$. Note that $N_{Y',x_0}^W
\cong F$ and $Stab_{\tH \times \R^{\times}}(x_0)$ acts on it by
$\rho(\lambda)a = \lambda^{2} a$. So we have $$ Sym^k(N_{Y',x_0}^W
)=Sym^k(N_{Y',x_0}^W)^{\R^{\times},\alpha_k^{-1}}.$$ So by the
homogeneity criterion any distribution $\xi \in \Sc^*(Y',
Sym^k(CN_{Y'}^W))^{\tH,\chi}$ is $\R$-homogeneous of type
$\alpha_k$.\\
(iii) is a simple computation. Also, it can be deduced from (i)
using proposition \ref{Trick}.
\end{proof}

\appendix

\section{Frobenius reciprocity} \label{SecFrob} 
%
In this section we obtain a slight generalization of Frobenius
reciprocity proven in \cite{Bar} (section 3). The proof will go
along the same lines and is included for the benefit of the
reader. To simplify the formulation and proof of Frobenius
reciprocity we pass from distributions to generalized functions.
Note that the space of smooth functions embeds canonically into
the space of generalized functions but there is no canonical
embedding of smooth functions to the space of distributions.

\setcounter{lemma}{0}

\begin{notation}
Let $X$ be a smooth manifold. We denote by $D_X$ the bundle of
densities on $X$. For a point $x \in X$ we denote by $D_{X,x}$ its
fiber in the point x. If $X$ is a Nash manifold then the bundle
$D_X$ has a natural structure of a Nash bundle. For its
description see \cite{AG}, section 6.1.1.
\end{notation}

\begin{notation}
Let $X$ be a smooth manifold. We denote by $C^{-\infty} (X)$ the
space $C^{-\infty} (X):= \cD(X,D_X)$ of {\bf generalized
functions} on $X$. Let $E$ be a vector bundle on $X$. We also
denote by $C^{-\infty} (X,E)$ the space $C^{-\infty} (X,E):=
\cD(X,D_X \otimes E^*)$ of generalized sections of $E$. For a
locally closed subset $Y \subset X$ we denote $C^{-\infty}_X(Y):=
\cD_X(Y,D_X)$ and $C^{-\infty}_X (Y,E):= \cD_X(Y,D_X \otimes
E^*)$.

\end{notation}

We will prove the following version of Frobenius reciprocity.

\begin{theorem}[Frobenius reciprocity] \label{GenFrob}
Let a Lie group $G$ act transitively on a smooth manifold $Z$. Let
$\varphi:X \to Z$ be a $G$-equivariant smooth map. Let $z_0\in Z$.
Denote by $G_{z_0}$ the stabilizer of $z_0$ in $G$ and by
$X_{z_0}$ the fiber of ${z_0}$. Let $E$ be a $G$-equivariant
vector bundle on $X$. Then there exists a canonical isomorphism
$\mathrm{Fr}:C^{-\infty}(X_{z_0},E|_{X_{z_0}})^{G_{z_0}} \to
C^{-\infty}(X,E)^{G}$. Moreover, for any closed $G_z$-invariant
subset $Y \subset X_{z_0}$, $Fr$ maps
$C^{-\infty}_{X_{z_0}}(Y,E|_{X_{z_0}})^{G_{z_0}}$ to
$C^{-\infty}_{X}(GY,E)^{G}$.
\end{theorem}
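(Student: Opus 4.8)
The plan is to construct the isomorphism $\mathrm{Fr}$ explicitly and check it is well-defined and bijective. First I would set up the basic geometry: since $G$ acts transitively on $Z$, the map $\varphi: X \to Z$ realizes $X$ as a fiber bundle over $Z = G/G_{z_0}$ with fiber $X_{z_0}$, and $G \times_{G_{z_0}} X_{z_0} \cong X$. The key point is that for a $G_{z_0}$-invariant generalized section $\eta$ on $X_{z_0}$ (with values in $E|_{X_{z_0}} \otimes D_{X_{z_0}}$, after unwinding the generalized-function normalization), one produces a $G$-invariant generalized section on $X$ by ``spreading $\eta$ around the orbit'': roughly, $\mathrm{Fr}(\eta)$ restricted to the fiber $X_{gz_0}$ is $g_*\eta$, and this is consistent precisely because $\eta$ is $G_{z_0}$-invariant. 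The use of \emph{densities} here is what makes the construction canonical and avoids choosing a measure on $Z$ or a splitting of $G \to Z$: integration along the fiber of $\varphi$ pairs a test density on $X$ with a generalized section on the fiber, and the $G$-equivariant density bundle bookkeeping is exactly arranged so that no modular characters appear (this is why the theorem is cleaner for generalized functions than the distribution version in Theorem \ref{Frob}, where unimodularity hypotheses are needed to convert densities back to distributions).

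Concretely, the construction I would follow is the one from \cite{Bar}, section 3. Given $\phi \in C_c^\infty(X, E^* \otimes D_X^{\,})$ — a compactly supported smooth test section — one restricts and integrates over the $G$-orbit direction to produce, for each $\eta \in C^{-\infty}(X_{z_0}, E|_{X_{z_0}})^{G_{z_0}}$, a number $\langle \mathrm{Fr}(\eta), \phi\rangle$. The formula uses a partition of unity or a local section $s: U \to G$ over an open $U \subset Z$ to trivialize, writes the integral over $X$ as an iterated integral over $Z$ and over the fibers, and then observes that the fiberwise pairing with the translated section $g \cdot \eta$ does not depend on the choice of $g$ over $z$ by $G_{z_0}$-invariance of $\eta$; the density-bundle normalization ensures the $Z$-integrand is genuinely a density on $Z$ so the outer integral is canonical. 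One checks $\mathrm{Fr}(\eta)$ is $G$-invariant directly from the construction. For injectivity, restriction to the fiber $X_{z_0}$ recovers $\eta$ (so $\mathrm{Fr}$ has a left inverse, namely ``restrict to $X_{z_0}$''); for surjectivity, given a $G$-invariant $\xi$ on $X$, its restriction $\xi|_{X_{z_0}}$ makes sense (the fiber is a closed submanifold, restriction of generalized sections along it needs a transversality/submersion argument, which holds since $\varphi$ is a submersion onto the orbit) and is $G_{z_0}$-invariant, and one verifies $\mathrm{Fr}(\xi|_{X_{z_0}}) = \xi$ using $G$-invariance of $\xi$.

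For the ``moreover'' clause about supports: if $\eta$ is supported in a closed $G_{z_0}$-invariant subset $Y \subset X_{z_0}$, then by the very construction $\mathrm{Fr}(\eta)$ is supported in $\bigcup_{g \in G} g \cdot Y = GY$, and conversely the restriction map sends something supported in $GY$ to something supported in $GY \cap X_{z_0} = Y$ (the last equality because $GY$ is $G$-invariant and $Y$ is $G_{z_0}$-invariant, so no extra points of $X_{z_0}$ enter). One should also note $GY$ is closed in $X$: this follows since $\varphi$ is continuous and $G$-equivariant and $GY = \varphi^{-1}(Z) \cap \overline{\cdots}$ — more carefully, $GY$ is the total space of the subbundle with fiber $Y$, which is closed because $Y$ is closed in $X_{z_0}$ and the bundle is locally trivial.

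The main obstacle I expect is the well-definedness of the restriction map $\xi \mapsto \xi|_{X_{z_0}}$ for a generalized section $\xi$, and proving it is a two-sided inverse to $\mathrm{Fr}$. Restriction of a distribution to a submanifold is not defined in general; here it works because $\varphi$ is a submersion (onto the $G$-orbit $Z$), so $X_{z_0}$ is a fiber of a submersion and one can restrict along it via the exact-sequence / normal-bundle machinery — but making this precise, and then verifying that the resulting restriction is $G_{z_0}$-equivariant and inverse to the spreading construction, is where the real work lies. The rest is careful bookkeeping with equivariant density bundles, which is exactly why passing to generalized functions (rather than distributions) at the outset pays off: it keeps all the $D_X$, $D_Z$, $D_{X_{z_0}}$ factors matched up canonically and removes every modular-character ambiguity.
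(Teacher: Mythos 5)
Your proposal is correct and takes essentially the same route as the paper: the paper likewise constructs $\mathrm{Fr}$ (following \cite{Bar} and \cite{Ber}) by choosing a section of $G\to Z$, pairing the test section fiberwise with the translated copy of $\eta$ so that the result is a density on $Z$ to be integrated, and inverts it by restricting to the fiber $X_{z_0}$ via the Harish-Chandra submersion principle, with the support claim following from compatibility with restriction to open sets. The only cosmetic difference is that the paper justifies the restriction step through the submersive action map $G\times X_{z_0}\to X$ and descent, rather than directly through $\varphi$, but this is the same mechanism.
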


First we will need the following version of Harish-Chandra's
submersion principle.

\begin{theorem}[Harish-Chandra's submersion principle] \label{HCsub}
Let $X,Y$ be smooth manifolds. Let $E \to X$ be a vector bundle.
Let $\varphi:Y \to X$ be a submersion. Then the map
$\varphi^*:C^{\infty}(X,E) \to C^{\infty}(Y,\varphi^*(E))$ extends
to a continuous map $\varphi^*:C^{-\infty}(X,E) \to
C^{-\infty}(Y,\varphi^*(E)).$
\end{theorem}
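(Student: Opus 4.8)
The plan is to realize $\varphi^*$ on generalized sections as the transpose of \emph{integration along the fibers} of $\varphi$. Recall that, by definition, $C^{-\infty}(X,E) = \cD(X,D_X\otimes E^*) = C_c^{\infty}(X, D_X\otimes E^*)^*$, and likewise $C^{-\infty}(Y,\varphi^*E) = C_c^{\infty}(Y, D_Y\otimes (\varphi^*E)^*)^*$, where $(\varphi^*E)^* = \varphi^*(E^*)$. Since $\varphi$ is a submersion, the exact sequence $0\to T_{Y/X}\to TY\to \varphi^*TX\to 0$ yields a canonical isomorphism $D_Y \cong D_{Y/X}\otimes \varphi^*D_X$, where $D_{Y/X}$ is the line bundle of densities along the fibers; hence $D_Y\otimes (\varphi^*E)^* \cong D_{Y/X}\otimes \varphi^*(D_X\otimes E^*)$. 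So fiber integration will be a linear map
$$\varphi_! : C_c^{\infty}\bigl(Y, D_Y\otimes (\varphi^*E)^*\bigr) \longrightarrow C_c^{\infty}\bigl(X, D_X\otimes E^*\bigr),$$
well defined because $\Supp(\varphi_! f)\subset \varphi(\Supp f)$ is compact, and I will set $\varphi^* := (\varphi_!)^t$.

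To construct $\varphi_!$ and prove its continuity, I would argue locally: by the submersion theorem, every point of $Y$ admits a chart in which $\varphi$ is the projection $U\times\R^m\to U$ onto an open $U\subset X$. Given $f$, cover $\Supp f$ by finitely many such charts, pick a subordinate partition of unity, integrate in each chart the $D_{Y/X}$-component of the corresponding piece of $f$ over the $\R^m$-direction, and sum; one then checks independence of all choices. In the model case this is the classical statement that $(u,t)\mapsto\int_{\R^m}h(u,t)\,dt$ maps $C_c^{\infty}(U\times\R^m)$ continuously to $C_c^{\infty}(U)$ --- support is controlled as above, smoothness comes from differentiation under the integral sign, and the required seminorm bounds are immediate --- and, since $C_c^{\infty}$ is an inductive limit of \Fre spaces, continuity of $\varphi_!$, and hence of its transpose $\varphi^*$, follows.

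It remains to check that $\varphi^*$ extends the ordinary pullback of smooth sections. For $g\in C^{\infty}(X,E)$, regarded inside $C^{-\infty}(X,E)$ via $f\mapsto\int_X\langle g,f\rangle$, and for any test section $f$ as above, the defining property of fiber integration gives $\int_X\langle g,\varphi_! f\rangle=\int_Y\langle\varphi^*g,f\rangle$ (this is Fubini in the local models), which says precisely that $(\varphi_!)^t g$ agrees with the classical pullback $\varphi^*g\in C^{\infty}(Y,\varphi^*E)$; and since $C^{\infty}(X,E)$ is weakly dense in $C^{-\infty}(X,E)$, the continuous extension is in fact unique. I expect the only real technical point to be the local analysis of $\varphi_!$: verifying smoothness and continuity of fiber integration, and the fact that only a compact portion of each (a priori non-compact) fiber of $\varphi$ contributes --- all of which is elementary once the submersion has been put in normal form.
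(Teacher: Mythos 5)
Your proof is correct. The paper itself gives essentially no argument: it reduces to the case of a trivial bundle by a partition of unity and then refers to Wallach, \emph{Real Reductive Groups I}, 8.A.2.5, for the local model. Your construction --- realizing $\varphi^*$ on generalized sections as the transpose of fiber integration $\varphi_!$, using the canonical splitting $D_Y\cong D_{Y/X}\otimes\varphi^*D_X$ available for a submersion, and checking continuity and agreement with the classical pullback in the local model $U\times\R^m\to U$ via differentiation under the integral and Fubini --- is the standard self-contained version of exactly that argument. Everything you claim checks out: $\Supp(\varphi_! f)\subset\varphi(\Supp f)$ is indeed compact, the identification $(\varphi^*E)^*\cong\varphi^*(E^*)$ and the density splitting give the right domain and codomain for $\varphi_!$, transposition of a continuous map between LF spaces is weak-* continuous (which is the topology the paper puts on $\cD$), and density of smooth sections makes the extension unique. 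What you buy relative to the paper's one-line citation is a complete, bundle-level proof that also exhibits the adjoint map $\varphi_!$ explicitly; what the paper buys is brevity. One could streamline slightly by noting, as the paper does, that a partition of unity reduces at once to trivial $E$, after which your local argument becomes the whole proof; but that is a cosmetic point, not a gap.
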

\begin{proof} By partition of unity it is enough to show for the case of trivial $E$.
In this case it can be easily deduced from \cite{WallachB1},
8.A.2.5.
\end{proof}

Also we will need the following fact that can be easily deduced
from \cite{WallachB1}, 8.A.2.9.
\begin{proposition} Let $E \to  Z$ be a vector
bundle and $G$ be a Lie group. Then there is a canonical
isomorphism $C^{-\infty}(Z,E) \to C^{-\infty}(Z \times G
,\pr^*(E))^G$, where $pr: Z \times G \to Z$ is the standard
projection and the action of $G$ on $Z \times$ G is the left
action on the $G$ coordinate.
\end{proposition}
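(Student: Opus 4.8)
The plan is to realize the asserted map as pullback along the projection $\pr:Z\times G\to Z$ and to invert it by ``integrating out the $G$-variable'' against a fixed compactly supported density of total mass one. Since $\pr$ is a submersion, Theorem \ref{HCsub} extends the usual pullback of smooth sections to a continuous map $\pr^*:C^{-\infty}(Z,E)\to C^{-\infty}(Z\times G,\pr^*E)$. As $G$ acts trivially on the $Z$-coordinate, $\pr$ is $G$-invariant, so $\pr^*E$ is canonically $G$-equivariant and $\pr^*\xi$ is $G$-invariant for every smooth $\xi$; since $\pr^*$ and the $G$-action on generalized sections are continuous and smooth sections are dense, the image of $\pr^*$ on all of $C^{-\infty}(Z,E)$ lies in $C^{-\infty}(Z\times G,\pr^*E)^G$. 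This gives a canonical continuous map $\mathrm{Fr}:C^{-\infty}(Z,E)\to C^{-\infty}(Z\times G,\pr^*E)^G$.

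For the inverse, write $q:Z\times G\to G$ for the other projection and use the canonical identifications $D_{Z\times G}\cong\pr^*D_Z\otimes q^*D_G$ and $(\pr^*E)^*\cong\pr^*(E^*)$, so that $C^{-\infty}(Z\times G,\pr^*E)=C_c^{\infty}\big(Z\times G,\pr^*(D_Z\otimes E^*)\otimes q^*D_G\big)^*$. Fix once and for all a compactly supported smooth density $\nu$ on $G$ with $\int_G\nu=1$. For $\omega\in C_c^{\infty}(Z,D_Z\otimes E^*)$ the section $\pr^*\omega\otimes q^*\nu$ is compactly supported, and I would define $J:C^{-\infty}(Z\times G,\pr^*E)\to C^{-\infty}(Z,E)=C_c^{\infty}(Z,D_Z\otimes E^*)^*$ by $\langle J(\eta),\omega\rangle:=\langle\eta,\pr^*\omega\otimes q^*\nu\rangle$; it is continuous and linear. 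Then $J\circ\mathrm{Fr}=\mathrm{id}$ follows from Fubini on smooth sections, $\langle\pr^*\xi,\pr^*\omega\otimes q^*\nu\rangle=\big(\int_G\nu\big)\langle\xi,\omega\rangle=\langle\xi,\omega\rangle$, extended to all $\xi$ by density and continuity.

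The one substantial point is $\mathrm{Fr}\circ J=\mathrm{id}$ on $C^{-\infty}(Z\times G,\pr^*E)^G$, equivalently that a $G$-invariant generalized section of $\pr^*E$ is automatically a pullback. Fix such an $\eta$ and $\omega\in C_c^{\infty}(Z,D_Z\otimes E^*)$, and consider $\Lambda_\omega:\mu\mapsto\langle\eta,\pr^*\omega\otimes q^*\mu\rangle$, a continuous functional on $C_c^{\infty}(G,D_G)$, i.e.\ an element of $C^{-\infty}(G)$. Since the $G$-action on $Z\times G$ only left-translates the $G$-coordinate, $g\cdot(\pr^*\omega\otimes q^*\mu)=\pr^*\omega\otimes q^*(g\cdot\mu)$, so the invariance of $\eta$ makes $\Lambda_\omega$ a left-invariant generalized function on $G$; by the standard fact that such a function is constant (see \cite{WallachB1}, 8.A.2.9) we get $\Lambda_\omega\equiv c(\omega)$ with $c$ linear and continuous in $\omega$, hence $c\in C^{-\infty}(Z,E)$, and $c=J(\eta)$ because $\langle J(\eta),\omega\rangle=\Lambda_\omega(\nu)=c(\omega)\int_G\nu$. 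Therefore $\langle\eta,\pr^*\omega\otimes q^*\mu\rangle=\big(\int_G\mu\big)c(\omega)=\langle\pr^*J(\eta),\pr^*\omega\otimes q^*\mu\rangle$ for all $\omega$ and $\mu$, and since the sections $\pr^*\omega\otimes q^*\mu$ span a dense subspace of the test sections on $Z\times G$ this forces $\eta=\pr^*J(\eta)$. Hence $\mathrm{Fr}$ and $J$ are mutually inverse, proving the proposition.

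I expect the only genuine obstacle to be bookkeeping: carrying the density and dual-bundle identifications consistently, checking that the $\pr^*\omega\otimes q^*\mu$ really span a dense subspace of $C_c^{\infty}\big(Z\times G,\pr^*(D_Z\otimes E^*)\otimes q^*D_G\big)$, and making sure the support conditions legitimize the Fubini step and the reduction to a statement about $C^{-\infty}(G)$. Conceptually the proposition is nothing more than ``left-invariant generalized functions on $G$ are constant'' transported to the trivial $G$-bundle $Z\times G\to Z$.
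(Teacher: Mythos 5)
The paper offers no argument here: it simply remarks that the proposition ``can be easily deduced from \cite{WallachB1}, 8.A.2.9'' and moves on. Your proof is a correct and complete version of exactly that deduction: you build the forward map $\pr^*$ via the submersion principle (Theorem \ref{HCsub}), invert it by pairing against a fixed compactly supported density of mass one on $G$, and isolate the one substantive step --- that a $G$-invariant generalized section of $\pr^*E$ is necessarily a pullback --- reducing it, by freezing the $Z$-test section and varying the $G$-test density, to the classical fact that left-invariant generalized functions on a Lie group are constants, which is what the cited Wallach item supplies. The bundle and density identifications, the Fubini step, and the density of simple tensors $\pr^*\omega\otimes q^*\mu$ in the test sections on $Z\times G$ are all standard and used correctly, so nothing is missing.
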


The last two statements give us the following corollary.

\begin{corollary}
Let a Lie group $G$ act on a smooth manifold $X$. Let $E$ be a
$G$-equivariant bundle over $X$. Let $Z \subset X$ be a
submanifold such that the action map $G \times Z \to X$ is
submersive. Then there exists a canonical map $HC:
C^{-\infty}(X,E)^G \to C^{-\infty}(Z,E|_Z)$.
\end{corollary}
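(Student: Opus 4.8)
The plan is to realize $HC$ as a composition of the two results that immediately precede the corollary. Write $m\colon G\times Z\to X$, $m(g,z)=gz$, for the restriction of the action map to $G\times Z$; by hypothesis $m$ is a submersion. Equip $G\times Z$ with the $G$-action $g'\cdot(g,z):=(g'g,z)$, i.e.\ left translation on the first factor only. Then $m$ is $G$-equivariant, since $m(g'g,z)=g'(gz)$, while the projection $\pr_Z\colon G\times Z\to Z$ is $G$-invariant.

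First I would apply the Harish-Chandra submersion principle (Theorem \ref{HCsub}) to $m$, obtaining a continuous pullback map
$$m^*\colon C^{-\infty}(X,E)\longrightarrow C^{-\infty}(G\times Z,\,m^*E),$$
which by the $G$-equivariance of $m$ restricts to a map $C^{-\infty}(X,E)^G\to C^{-\infty}(G\times Z,\,m^*E)^G$. Next I would identify the $G$-equivariant bundle $m^*E$ on $G\times Z$ with $\pr_Z^*(E|_Z)$: the fiber of $m^*E$ at $(g,z)$ is $E_{gz}$, and the equivariant structure on $E$ furnishes an isomorphism $E_z\xrightarrow{\ \sim\ }E_{gz}$, so the assignment $((g,z),e)\mapsto((g,z),g^{-1}e)$ defines a bundle isomorphism $m^*E\cong\pr_Z^*(E|_Z)$ which one checks intertwines the two $G$-actions (the action on $\pr_Z^*(E|_Z)$ being trivial on fibers, since $\pr_Z$ is $G$-invariant). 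Hence $C^{-\infty}(G\times Z,\,m^*E)^G\cong C^{-\infty}(G\times Z,\,\pr_Z^*(E|_Z))^G$. Finally, the proposition stated just above the corollary, applied with the two factors interchanged, gives a canonical isomorphism $C^{-\infty}(G\times Z,\,\pr_Z^*(E|_Z))^G\cong C^{-\infty}(Z,E|_Z)$. Composing the three maps produces the required canonical map $HC\colon C^{-\infty}(X,E)^G\to C^{-\infty}(Z,E|_Z)$.

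The only step that is more than bookkeeping is the identification $m^*E\cong\pr_Z^*(E|_Z)$ as $G$-equivariant bundles: one must write out the equivariant structure on $E$ explicitly and verify that the natural fiberwise isomorphism is compatible with the chosen $G$-actions on the two sides. Everything else is a formal concatenation of Theorem \ref{HCsub} with the preceding proposition, together with the observation that each of the three maps is continuous and independent of auxiliary choices, which is what justifies calling $HC$ canonical.
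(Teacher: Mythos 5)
Your proof is correct and is exactly the argument the paper intends: the paper simply states that ``the last two statements give us the following corollary'' without writing out the details, and your composition of the Harish-Chandra submersion principle applied to the action map $m:G\times Z\to X$ with the $G$-equivariant identification $m^*E\cong\pr_Z^*(E|_Z)$ and the preceding proposition is the natural way to fill that in.
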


Now we can prove Frobenius reciprocity (Theorem \ref{GenFrob}).
\begin{proof} [Proof of Frobenius reciprocity.]
We construct the map
$\mathrm{Fr}:C^{-\infty}(X_{z_0},E|_{X_{z_0}})^{G_{z_0}} \to
C^{-\infty}(X,E)^{G}$ in the same way like in \cite{Ber} (1.5).
Namely, fix a set-theoretic section $\nu: Z \to G$. It gives us in
any point $z \in Z$ an identification between $X_z$ and $X_{z_0}$.
Hence we can interpret a generalized function $\xi \in
C^{-\infty}(X_{z_0},E|_{X_{z_0}})$ as a functional $\xi_z:
\cC(X_z,E^*|_{X_z} \otimes D_{X_z}) \to \C$, or as a map $\xi_z
:\cC(X_z,(E^* \otimes D_{X})|_{X_z}) \to D_{Z,z}$. Now define
$$\mathrm{Fr}(\xi)(f):=\int _{z \in Z} \xi_z(f|_{X_z}).$$ It is easy to see that $\mathrm{Fr}$ is well-defined.

It is easy to see that the map $HC: C^{-\infty}(X,E)^G \to
C^{-\infty}(X_{z_0},E|_{X_{z_0}})$ described in the last corollary
gives the inverse map.

The fact that for any closed $G_z$-invariant subset $Y \subset
X_{z_0}$, $Fr$ maps
$C^{-\infty}_{X_{z_0}}(Y,E|_{X_{z_0}})^{G_{z_0}}$ to
$C^{-\infty}_{X}(GY,E)^{G}$ follows from the fact that $Fr$
commutes with restrictions to open sets.
\end{proof}

\begin{corollary}
Theorem \ref{Frob} holds.
\end{corollary}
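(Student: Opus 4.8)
The plan is to deduce Theorem \ref{Frob} from the generalized-function version Theorem \ref{GenFrob} by (a) rewriting spaces of $(G,\chi)$-equivariant distributions as spaces of $G$-invariant generalized sections of an appropriate $G$-equivariant bundle, and (b) using the unimodularity hypotheses — which are absent from Theorem \ref{GenFrob} — to trivialize the density bundle in the fiber direction.

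First recall the formal dictionary: for a $G$-equivariant vector bundle $E$ on a $G$-manifold $X$ and a character $\chi$ of $G$, there is a canonical identification $\cD(X,E)^{G,\chi}=C^{-\infty}(X,D_X\otimes E^*\otimes\C_\chi)^{G}$, where $\C_\chi$ is the trivial line bundle on which $G$ acts by $\chi$. Indeed $C^{-\infty}(X,F)=\cD(X,D_X\otimes F^*)$ by definition, and twisting a $(G,\chi)$-equivariant object by $\C_\chi$ turns it into a genuinely $G$-invariant one. In particular $\cD(X)^{G,\chi}=C^{-\infty}(X,D_X\otimes\C_\chi)^{G}$ and, applied to $X_{z_0}$ with the group $G_{z_0}:=\Stab_G(z_0)$, also $\cD(X_{z_0})^{G_{z_0},\chi}=C^{-\infty}(X_{z_0},D_{X_{z_0}}\otimes\C_\chi)^{G_{z_0}}$; likewise $\cD_X(GY,E_0)^{G,\chi}=C^{-\infty}_X(GY,D_X\otimes E_0^*\otimes\C_\chi)^{G}$ for any $G$-equivariant bundle $E_0$.

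Now apply Theorem \ref{GenFrob} to the $G$-equivariant bundle $D_X\otimes\C_\chi$ (and, for the ``moreover'' part, to $D_X\otimes E_0^*\otimes\C_\chi$). This gives a canonical isomorphism
$$C^{-\infty}(X,D_X\otimes\C_\chi)^{G}\cong C^{-\infty}\bigl(X_{z_0},(D_X\otimes\C_\chi)|_{X_{z_0}}\bigr)^{G_{z_0}},$$
carrying generalized functions supported on $GY$ to those supported on $Y$ (this is built into the statement of Theorem \ref{GenFrob}). It therefore remains to produce a $G_{z_0}$-equivariant isomorphism $D_X|_{X_{z_0}}\cong D_{X_{z_0}}$. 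Since $\varphi:X\to Z$ is a $G$-equivariant submersion and $Z\cong G/G_{z_0}$, along the fiber we have a $G_{z_0}$-equivariant decomposition $T_X|_{X_{z_0}}\cong T_{X_{z_0}}\oplus\varphi^*(T_{z_0}Z)$, whence $D_X|_{X_{z_0}}\cong D_{X_{z_0}}\otimes L$ as $G_{z_0}$-equivariant bundles, where $L$ is the one-dimensional $G_{z_0}$-representation $|\Lambda^{\mathrm{top}}(\g/\g_{z_0})^*|$. The hypothesis that $G$ and $G_{z_0}$ are unimodular forces $L$ to be the trivial representation; equivalently — and this is the cleanest way to pin down the canonical isomorphism — these hypotheses guarantee the existence of a nonzero $G$-invariant density on $Z=G/G_{z_0}$, a choice of which trivializes $L$ $G_{z_0}$-equivariantly. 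Combining the three displayed identifications yields $\cD(X)^{G,\chi}\cong\cD(X_{z_0})^{G_{z_0},\chi}$, compatibly with the support conditions, which is Theorem \ref{Frob}.

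The only substantive point is the last one — the comparison of the density bundle of $X$ with that of the fiber $X_{z_0}$ — and it is precisely here that the unimodularity of $G$ and of $G_{z_0}$ enters; everything else is a formal manipulation of $G$-equivariant line bundles together with the support statement already contained in Theorem \ref{GenFrob}.
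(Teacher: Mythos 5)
Your argument is correct and follows essentially the same route as the paper's proof: rewrite $\cD(X,E)^{G,\chi}$ as $G$-invariant generalized sections of a twisted density bundle, apply Theorem \ref{GenFrob}, and then use the unimodularity of $G$ and $G_{z_0}$ to see that the residual one-dimensional $G_{z_0}$-representation $D_{Z,z_0}$ is trivial. The only cosmetic difference is that you twist by the equivariant line $\C_\chi$ whereas the paper reduces to trivial $\chi$ by twisting $E$ by $\chi^{-1}$ at the outset; these are the same manoeuvre.
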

\begin{proof}
Without loss of generality we can assume that $\chi$ is trivial,
since we can twist $E$ by $\chi^{-1}$. We have
\begin{multline*}
\cD(X,E)^{G} \cong C^{-\infty} (X,E^* \otimes D_X)^{G} \cong
C^{-\infty}(X_{z_0},(E^* \otimes D_X)|_{X_{z_0}})^{G_{z_0}} \cong \\
(\cD(X_{z_0},E^*|_{X_{z_0}}) \otimes D_{Z,z_0})^{G_{z_0}}.
\end{multline*}
It is easy to see that in case that $G$ and $G_{z_0}$ are
unimodular, the action of $G_{z_0}$ on $D_{Z,z_0}$ is trivial.
\end{proof}

\begin{remark}
For a Nash manifold $X$ one can introduce the space of {\bf
generalized Schwartz functions} by $\G (X):=\Sc^* (X,D_X)$. Given
a Nash bundle $E$ one may consider the generalized Schwartz
sections $\G(X,E):= \Sc^*(X,D_X \otimes E^*)$. Frobenius
reciprocity in the Nash setting is obtained by restricting $Fr$
and yields
$$Fr:\G(X,E)^{G} \cong \G(X_z,E|_{X_z})^{G_z}.$$
The proof goes along the same lines, but one has to prove that the
corresponding integrals converge. We will not give the proof here
since we will not use this fact.
\end{remark}

\section{Filtrations on spaces of distributions} \label{AppFilt} 

\subsection{Filtrations on linear spaces}
$ $

In what follows, a filtration on a vector space is always
increasing and exhaustive. We make the following definition:
\begin{definition}
Let $V$ be a vector space. Let $I$ be a well ordered set. Let $F^i$
be a filtration on  $V$ indexed by $i \in I$. We denote
$\mathrm{Gr}^i(V):= F^i/(\bigcup_{j<i} F^j)$.
\end{definition}

The following lemma is obvious.

\begin{lemma} \label{ob}
Let $V$ be a representation of an abstract group $G$. Let $I$ be a
well ordered set. Let $F^i$ be a filtration of  $V$ by $G$
invariant subspaces indexed by $i \in I$. Suppose that for any $i
\in I$ we have $\mathrm{Gr}^i(V)^G=0$. Then $V^G=0$. An analogous
statement also holds if we replace the group $G$ by a Lie algebra
$\g$.
\end{lemma}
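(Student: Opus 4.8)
The plan is to argue by contradiction, using the well-ordering of $I$ at exactly one point. Suppose $V^G \neq 0$ and choose a nonzero vector $v \in V^G$. Since the filtration $\{F^i\}_{i \in I}$ is exhaustive, the set $\{i \in I : v \in F^i\}$ is nonempty, so by well-ordedness it contains a least element $i_0$.

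By minimality of $i_0$ we have $v \notin F^j$ for every $j < i_0$, hence $v \notin \bigcup_{j < i_0} F^j$. Consequently the image $\overline{v}$ of $v$ under the projection $F^{i_0} \to \mathrm{Gr}^{i_0}(V) = F^{i_0}/\bigl(\bigcup_{j<i_0} F^j\bigr)$ is nonzero. Since each $F^j$ is $G$-invariant, so is $\bigcup_{j<i_0} F^j$, and therefore the $G$-action on $F^{i_0}$ descends to $\mathrm{Gr}^{i_0}(V)$, making the projection $G$-equivariant. As $v$ is $G$-fixed, $\overline{v}$ is a nonzero element of $\mathrm{Gr}^{i_0}(V)^G$, contradicting the hypothesis $\mathrm{Gr}^{i_0}(V)^G = 0$. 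Hence $V^G = 0$.

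For the Lie algebra version one repeats the argument verbatim, replacing ``$G$-fixed'' by ``annihilated by $\g$'': the $\g$-invariance of the subspaces $F^j$ again ensures that $\g$ acts on $\mathrm{Gr}^{i_0}(V)$ and that the projection is $\g$-equivariant, so $\overline{v} \in \mathrm{Gr}^{i_0}(V)^{\g}$ is nonzero, a contradiction. There is no genuine obstacle here — consistent with the lemma being labelled obvious — the only step worth isolating is the passage to the minimal index $i_0$, which is precisely where the well-ordedness of $I$ is needed.
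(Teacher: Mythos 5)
Your proof is correct, and since the paper gives no proof (it simply labels the lemma ``obvious''), your argument is precisely the standard one being implicitly invoked: pass to the minimal index $i_0$ with $v \in F^{i_0}$ via well-ordering, observe that the class of $v$ in $\mathrm{Gr}^{i_0}(V)$ is a nonzero invariant, and contradict $\mathrm{Gr}^{i_0}(V)^G = 0$. Nothing to add or subtract.
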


\subsection{Filtrations on spaces of distributions} \label{ProofStrat}
$ $

\begin{theorem} \label{Filt2}
Let $X$ be a Nash manifold. Let $E$ be a Nash bundle on $X$. Let
$Z\subset X$ be a Nash submanifold. Then the space $\Sc^*_X (Z,E)$
has a natural filtration $F^k := F^k(\Sc^*_X (Z,E))$ such that
$F^k/F^{k-1} \cong \Sc^*(Z,E|_Z \otimes  Sym^k(CN^X_Z))$.
\end{theorem}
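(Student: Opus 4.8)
The plan is to construct the filtration via order of vanishing along $Z$, mimicking the familiar case $E$ trivial and $X = \R^n$, $Z = \R^m \subset \R^n$. First I would reduce to the case where $Z$ is closed in $X$: since $\Sc^*_X(Z,E)$ only depends on a neighborhood of $Z$ in $X$ (an open Nash submanifold containing $Z$ as a closed subset exists by restricting to $X \setminus (\overline Z \setminus Z)$), and by the exact sequence for Schwartz distributions (part (i) of the Remark following the definition of $\Sc^*$, i.e. Theorem \ref{ExSeq}) the restriction to such an open submanifold is harmless. So assume $Z \subset X$ closed. Next, by a partition of unity argument using the Nash structure, I would reduce to the model situation: $X = Z \times \R^d$ with $Z$ embedded as $Z \times \{0\}$, $d = \operatorname{codim} Z$, and $E$ a trivial bundle $Z \times \R^d \times \R^r$; here the conormal bundle $CN_Z^X$ is canonically the trivial bundle $Z \times (\R^d)^*$, so $E|_Z \otimes Sym^k(CN_Z^X)$ is $Z \times Sym^k((\R^d)^*)^{\oplus r}$.

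In the model case I would define $F^k(\Sc^*_X(Z,E))$ to be the subspace of Schwartz distributions on $X$ supported on $Z$ that are annihilated by multiplication by any product of $k+1$ functions each vanishing on $Z$ — equivalently, distributions of "transversal order $\le k$". Concretely, writing coordinates $(z,t) \in Z \times \R^d$, every $\xi \in \Sc^*_X(Z,E)$ is, by the standard structure theorem for distributions supported on a submanifold (valid in the Schwartz category by \cite{AG}), a locally finite sum $\xi = \sum_{\beta} \partial_t^\beta (\eta_\beta \otimes \delta_0)$ with $\eta_\beta \in \Sc^*(Z)$-valued sections; then $F^k$ consists of those $\xi$ with $|\beta| \le k$. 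The associated graded map $F^k/F^{k-1} \to \Sc^*(Z, E|_Z \otimes Sym^k(CN_Z^X))$ sends the class of $\xi$ to $\sum_{|\beta| = k} \eta_\beta \otimes (t^*)^\beta$, where $(t^*)^\beta \in Sym^k((\R^d)^*)$ is the corresponding monomial in the conormal directions. One checks this is well-defined (independent of the choices of $\eta_\beta$ modulo lower order) and bijective onto the target.

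The main obstacle — and the only real content beyond bookkeeping — is verifying that this construction is \emph{natural}, i.e. independent of the choice of local coordinates and the chosen splitting $X \cong Z \times \R^d$, so that the locally defined filtrations and the isomorphisms $F^k/F^{k-1} \cong \Sc^*(Z, E|_Z \otimes Sym^k(CN_Z^X))$ glue. The point is that a change of coordinates fixing $Z$ acts on the transversal Taylor data by a unipotent-plus-linear transformation: the top-order part transforms exactly by the induced $\mathrm{GL}$-action on $Sym^k$ of the conormal space (which is coordinate-free, being defined intrinsically from $CN_Z^X$), while the lower-order corrections land in $F^{k-1}$ and hence die in the quotient. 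This is precisely the statement that the symbol of a transversal differential operator is well-defined; I would make it rigorous by noting that the Nash diffeomorphisms of $X$ preserving $Z$ act on $\Sc^*_X(Z,E)$ preserving each $F^k$ (clear, since "order of vanishing" is intrinsic) and inducing on $\mathrm{Gr}^k$ the transformation dictated by functoriality of $CN_Z^X$ and $Sym^k$. Finally I would invoke Lemma \ref{ob}-style gluing (or simply a sheaf-theoretic patching over a finite Nash cover, using that everything is compatible with restriction to opens) to assemble the global filtration. Exhaustiveness is the structure theorem again; the filtration is increasing by construction.
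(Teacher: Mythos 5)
The paper does not prove this theorem; it cites \cite{AG}, Corollary 5.5.4. Your reconstruction captures the correct structural idea --- a filtration by transversal order with a symbol map to $\Sc^*(Z,E|_Z\otimes Sym^k(CN^X_Z))$ --- which is indeed the mechanism behind the cited result, so the outline is sound. However, two of the steps as sketched would not go through in the Nash/Schwartz category, and they are precisely the points that make this a theorem rather than folklore.

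First, ``locally finite sum'' is the wrong kind of finiteness. For general distributions in $\cD$ supported on $Z$ the transversal order is only locally bounded, so a distribution of unbounded order lies in no $F^k$ and $\bigcup_k F^k$ is a proper subspace; the theorem is stated for $\Sc^*$ precisely because a Schwartz distribution has globally bounded order, so that the transversal Taylor expansion $\sum_\beta \partial_t^\beta(\eta_\beta\otimes\delta_0)$ is a genuinely finite sum with $\eta_\beta \in \Sc^*(Z)$. Establishing that global bound, and that the coefficients are Schwartz rather than merely smooth distributions, is the real content and not bookkeeping. Second, the local-to-global passage cannot be done by ``sheaf-theoretic patching'': $\Sc^*$ is not a sheaf --- all one has is surjectivity of restriction to an open Nash subset (Theorem \ref{ExSeq}) --- and a Nash or tempered partition of unity subordinate to a cover trivializing $N_Z^X$ is itself a nontrivial ingredient, not a given. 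The cleaner route, which is what \cite{AG} does, is to define $F^k$ intrinsically as the distributions annihilated by the $(k{+}1)$-st power of the ideal of $Z$ and then compute the graded pieces once and for all via a Nash tubular neighborhood identifying a neighborhood of $Z$ with the total space of $N_Z^X$; naturality is then automatic and one never has to glue distributions. Also, Lemma \ref{ob} is an exhaustion lemma for invariants of a filtered module, not a gluing device, so invoking it at that point does no work.
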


For proof see \cite{AG}, corollary 5.5.4.

We will also need the following important theorem
\begin{theorem} \label{ExSeq}
Let $X$ be a Nash manifold,  $U\subset X$ be an open Nash
submanifold and $E$ be a Nash bundle over $X$. Then we have the
following exact sequence
$$0 \to \Sc^*_X(X \setminus U,E)\to \Sc^*(X,E) \to \Sc^*(U,E|_U)\to 0.$$
\end{theorem}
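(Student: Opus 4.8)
The plan is to obtain the sequence by dualizing the corresponding exact sequence for Schwartz \emph{functions} supplied by \cite{AG}. Write $Z:=X\setminus U$. By the theory of Schwartz functions on Nash manifolds, extension by zero defines a continuous linear map $\mathrm{ext}\colon\Sc(U,E|_U)\to\Sc(X,E)$ which is a closed topological embedding (\cite{AG}); on the level of distributions, the map $\Sc^*(X,E)\to\Sc^*(U,E|_U)$ appearing in the statement is its transpose $\mathrm{ext}^*\colon\xi\mapsto\xi\circ\mathrm{ext}$ (ordinary restriction of distributions to $U$), and the first map is the inclusion of the subspace $\Sc^*_X(Z,E)$, which is injective tautologically. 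Thus the content is exactness in the middle and on the right.

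For surjectivity of $\mathrm{ext}^*$ I would invoke Hahn--Banach: since $\mathrm{ext}$ realizes $\Sc(U,E|_U)$ as a closed subspace of $\Sc(X,E)$ carrying the subspace topology, every element of $\Sc^*(U,E|_U)\cong\bigl(\mathrm{ext}(\Sc(U,E|_U))\bigr)^*$ extends to a continuous functional on $\Sc(X,E)$. It then remains to check that $\ker(\mathrm{ext}^*)=\mathrm{ext}(\Sc(U,E|_U))^{\perp}$ coincides with $\Sc^*_X(Z,E)$. If $\xi\circ\mathrm{ext}=0$, then $\xi$ kills every $f\in C_c^\infty(U,E|_U)$, regarded via $\mathrm{ext}$ as a compactly supported section of $E$ living inside $U$; hence $\xi$ vanishes near each point of $U$, so $\Supp(\xi)\subset Z$. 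Conversely, if $\Supp(\xi)\subset Z$ and $g\in\Sc(U,E|_U)$, choose $g_m\in C_c^\infty(U,E|_U)$ with $g_m\to g$ in $\Sc(U,E|_U)$ (density of compactly supported sections, \cite{AG}); then $\mathrm{ext}(g_m)\to\mathrm{ext}(g)$ in $\Sc(X,E)$ by continuity of $\mathrm{ext}$, each $\mathrm{ext}(g_m)$ is supported in $U$ and so annihilated by $\xi$, and continuity of $\xi$ gives $\xi(\mathrm{ext}(g))=0$. This establishes $\ker(\mathrm{ext}^*)=\Sc^*_X(Z,E)$ and finishes the proof.

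The one substantive ingredient, which I would import from \cite{AG} rather than reprove, is that extension by zero is a \emph{closed} topological embedding of $\Sc(U,E|_U)$ into $\Sc(X,E)$: this is exactly the point where Schwartz functions and distributions behave better than general smooth functions and distributions, and it is what produces surjectivity on the right of the sequence (via Hahn--Banach). Everything else is the elementary dualization and support argument above; the density of $C_c^\infty(U,E|_U)$ in $\Sc(U,E|_U)$ and the continuity of $\mathrm{ext}$ are part of the basic formalism of \cite{AG} and present no difficulty.
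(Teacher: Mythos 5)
Your proof is correct and takes essentially the same route as the paper: the paper likewise regards left- and middle-exactness as routine and reduces right-exactness to the surjectivity of restriction of Schwartz distributions to an open Nash submanifold, which it cites from \cite{AG} (Corollary 5.4.4) and which rests on exactly the closed-embedding property of extension by zero that you invoke. Your Hahn--Banach extension and the density-of-$C_c^\infty$ support argument are precisely the details that this citation packages.
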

\begin{proof}
The only non-trivial part is to show that the restriction map
$\Sc^*(X,E) \to \Sc^*(U,E|_U)\to 0$ is onto. It is done in
\cite{AG}, corollary 5.4.4.
\end{proof}

\noindent Now we obtain the following corollary of theorem
\ref{Filt2} using the exact sequence from theorem \ref{ExSeq}.

\begin{corollary} \label{Filt2Cor}
Let $X$ be a Nash manifold. Let $E$ be Nash bundle over $X$. Let $Y
\subset X$ be locally closed subset. Let $Y = \bigcup_{i=0}^l Y_i$
be a Nash stratification of $Y$.

Then the space $\Sc_X^*(Y,E)$ has a natural filtration
$F^{ik}(\Sc_X^*(Y,E))$ such that $$\mathrm{Gr}^{ik}(\Sc_X^*(Y,E))
\cong \Sc^*({Y_i},E|_{Y_i} \otimes Sym^k(CN^X_{Y_i}))$$ for all $i
\in \{1...l\}$ and $k \in \Z_{\geq 0}$.
\end{corollary}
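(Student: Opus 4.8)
The plan is to combine Theorem \ref{Filt2} (which handles a single submanifold) with the exact sequence of Theorem \ref{ExSeq} (which lets one pass from an open stratum to its closed complement), proceeding by induction on the number $l$ of strata. First I would order the strata compatibly, that is, choose the indexing so that $Y_0$ is closed in $Y$, $Y_0 \cup Y_1$ is closed in $Y$, and in general $\bigcup_{j \le i} Y_j$ is closed in $Y$ for each $i$; this is possible for a stratification because the frontier of each stratum is a union of lower strata. Equivalently, $U_i := \bigcup_{j \ge i} Y_j$ is open in $Y$, and $Y_i$ is closed in $U_i$.

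Next I would set up the induction. For $l=0$ the statement is exactly Theorem \ref{Filt2} (with $Y$ a single submanifold, and the filtration indexed by $k$ alone). For the inductive step, apply Theorem \ref{ExSeq} to the Nash manifold obtained by working inside $X$ relative to the open set $X \setminus (\overline{Y_0} \setminus Y_0)$: we get a short exact sequence
$$0 \to \Sc^*_X(Y_0,E) \to \Sc^*_X(Y,E) \to \Sc^*_X(Y \setminus Y_0, E) \to 0.$$
Here I am using that $Y_0$ is closed in $Y$, so $\Sc^*_X(Y_0,E)$ is the subspace of $\Sc^*_X(Y,E)$ of distributions supported on $Y_0$, and $Y \setminus Y_0 = \bigcup_{i=1}^l Y_i$ is a Nash stratification with $l$ strata. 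On the sub, Theorem \ref{Filt2} gives a filtration with graded pieces $\Sc^*(Y_0, E|_{Y_0} \otimes Sym^k(CN^X_{Y_0}))$, indexed by $k$; on the quotient, the inductive hypothesis gives a filtration indexed by $(i,k)$ with $i \in \{1,\dots,l\}$. Splicing these two filtrations — putting the filtration of the subspace first (indices $(0,k)$, $k \ge 0$) and then lifting the filtration of the quotient (indices $(i,k)$, $i \ge 1$) — produces the desired filtration $F^{ik}$ on $\Sc^*_X(Y,E)$ with the stated graded pieces. One should order the index set $\{(i,k)\}$ lexicographically (or by $i$ first, then $k$) so that it is well ordered, matching the setup of Lemma \ref{ob}; this is the form in which the corollary is used in the proof of Theorem \ref{Strat}.

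The main point requiring care — and the only genuine subtlety — is the compatibility of the two filtrations under the exact sequence: one must check that pulling back the quotient filtration along the surjection $\Sc^*_X(Y,E) \to \Sc^*_X(Y\setminus Y_0,E)$ and grafting the subspace filtration below it yields graded pieces that are genuinely $\Sc^*(Y_i, E|_{Y_i}\otimes Sym^k(CN^X_{Y_i}))$ and not some extension. This is automatic from exactness: for the bottom segment the graded pieces are computed entirely inside the subspace, and for the indices coming from the quotient the graded piece of the lifted filtration maps isomorphically onto the corresponding graded piece downstairs, since the subspace is exhausted before we reach those indices. A secondary bookkeeping issue is that Theorem \ref{Filt2} as stated indexes by a single integer $k$ while here we need a double index; this is handled simply by relabeling $F^k \rightsquigarrow F^{0k}$. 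I do not expect any analytic difficulty beyond what is already packaged in Theorems \ref{Filt2} and \ref{ExSeq}.
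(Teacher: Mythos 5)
Your proposal is correct and follows exactly the route the paper intends: the paper simply asserts that the corollary is obtained from Theorem \ref{Filt2} using the exact sequence of Theorem \ref{ExSeq}, and your induction on the number of strata with the splicing of the subspace and quotient filtrations is precisely the standard d\'evissage that fills in this assertion.
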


\begin{corollary} \label{SchwartzStrat}
Let $X$ be a Nash manifold. Let $E$ be Nash bundle over $X$. Let
$Y \subset X$ be locally closed subset. Let $Y = \bigcup_{i=0}^l
Y_i$ be a Nash stratification of $Y$.

Suppose that for any $0\leq i \leq l$ and any $k \in \Z_{\geq 0}$,
we have $$\Sc^*({Y_i},E|_{Y_i} \otimes Sym^k(CN_{Y_i}^X))^G=0.$$
Then $\Sc^*_X(Y,E)^G=0.$
\end{corollary}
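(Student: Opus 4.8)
The plan is to derive the corollary formally from the two results immediately preceding it: the filtration of Corollary \ref{Filt2Cor} and the elementary Lemma \ref{ob}. Implicit in the statement is that a group $G$ acts on $X$, that $E$ is a $G$-equivariant Nash bundle, and that each stratum $Y_i$ (and hence $Y$) is $G$-invariant, so that all the $^G$-invariants written down make sense.

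First I would apply Corollary \ref{Filt2Cor} to equip $\Sc^*_X(Y,E)$ with its natural filtration $F^{ik}$, indexed by the pairs $(i,k)$ with $0\le i\le l$ and $k\in\Z_{\ge 0}$ ordered as in that corollary (a well-ordered index set), whose successive quotients are $\mathrm{Gr}^{ik}(\Sc^*_X(Y,E))\cong \Sc^*(Y_i,E|_{Y_i}\otimes Sym^k(CN_{Y_i}^X))$. Because this construction is canonical and the data $(X,E,\{Y_i\})$ carry compatible $G$-actions, the filtration is by $G$-invariant subspaces and the displayed isomorphisms are $G$-equivariant; in particular, passing to $G$-invariants, $\mathrm{Gr}^{ik}(\Sc^*_X(Y,E))^G\cong \Sc^*(Y_i,E|_{Y_i}\otimes Sym^k(CN_{Y_i}^X))^G$, which vanishes for every $(i,k)$ by hypothesis.

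Then I would simply invoke Lemma \ref{ob} with $V=\Sc^*_X(Y,E)$ and the above $G$-stable filtration to conclude $\Sc^*_X(Y,E)^G=0$. The only thing to be careful about is bookkeeping — that the double index of Corollary \ref{Filt2Cor} really does constitute an increasing, exhaustive filtration over a well-ordered index set in the sense demanded by Lemma \ref{ob}, and that $G$-equivariance propagates through the whole construction — but both are built into the statement of Corollary \ref{Filt2Cor}, so there is no genuinely new content here: this corollary is a direct formal consequence of \ref{Filt2Cor} and \ref{ob}, with no real obstacle to overcome.
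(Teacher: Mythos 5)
Your proof is correct and is exactly the argument the paper intends: the corollary is a formal consequence of the filtration in Corollary \ref{Filt2Cor} together with Lemma \ref{ob}, and the paper offers no separate proof precisely because this combination is immediate. Your two bookkeeping remarks — that the double index $(i,k)$ is well-ordered (finite $\times\ \Z_{\geq 0}$ with lexicographic order) and that naturality of the filtration yields $G$-stability and $G$-equivariant graded identifications — are the correct points to note and are implicitly assumed in the paper.
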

%

%

By theorem \ref{NoSNoDist}, this corollary implies theorem
\ref{Strat}.

\subsection{Fourier transform and proof of proposition \ref{SimpleOrbitCheck}} \label{ProofOCheck}
\begin{notation} [Fourier transform]
Let $V$ be a finite dimensional vector space over $F$. Let $B$ be
a non-degenerate bilinear form on $V$. We denote by
$\Fou_B:\Sc^*(V) \to \Sc^*(V)$ the Fourier transform defined using
$B$ and the self-dual measure on $V$.
\end{notation}
We will use the following well known fact.

\begin{proposition}
Let $V$ be a finite dimensional vector space over $F$. Let $B$ be
a non-degenerate bilinear form on $V$. Consider the homothety
action $\rho$ of $F^{\times}$ on $\Sc^*(V)$. Then for any $\lambda
\in F^{\times}$ we have
$$\rho(\lambda) \circ \Fou_B = |\lambda |^{-\dim_{\R} V} \Fou_B \circ \rho(\lambda^{-1}).$$

\end{proposition}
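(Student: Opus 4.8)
The plan is to reduce to the standard computation of how Fourier transform interacts with dilations. First I would unwind the definitions: the Fourier transform $\Fou_B$ on $\Sc(V)$ is given by $(\Fou_B f)(w) = \int_V f(v)\,\psi(B(v,w))\,dv$ where $dv$ is the self-dual Haar measure determined by $B$ and the fixed additive character $\psi$, and on $\Sc^*(V)$ it is defined by transposition, $\langle \Fou_B \xi, f\rangle = \langle \xi, \Fou_B f\rangle$. Recall also that $\rho(\lambda)$ acts on $\Sc^*(V)$ by $\langle\rho(\lambda)\xi, f\rangle = \langle\xi, \rho(\lambda^{-1})f\rangle$ with $(\rho(t)f)(v) = f(t^{-1}v)$ — or, following Definition \ref{DefHomType}, $\rho(t^{-1})(f)(v)=f(tv)$; the precise normalization only affects bookkeeping of signs in the exponent and not the shape of the identity.

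The key step is a change of variables at the level of test functions. For $f\in\Sc(V)$ and $\lambda\in F^\times$, substituting $v\mapsto \lambda v$ in the defining integral gives
$$\Fou_B(\rho(\lambda^{-1})f)(w) = \int_V f(\lambda v)\,\psi(B(v,w))\,dv = |\lambda|_F^{-\dim_\R V}\int_V f(v)\,\psi(B(\lambda^{-1}v,w))\,dv,$$
where $|\cdot|_F$ denotes the normalized absolute value, so that the Jacobian of $v\mapsto\lambda v$ with respect to Haar measure is $|\lambda|_F^{\dim_F V}$, which equals $|\lambda|_\R^{\dim_\R V}$ — this matches the exponent $\dim_\R V$ in the statement both for $F=\R$ and $F=\C$. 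Since $B(\lambda^{-1}v,w) = B(v,\lambda^{-1}w)$ by bilinearity, the last integral is $(\rho(\lambda)\Fou_B f)(w)$ in the appropriate normalization. Thus at the level of functions $\Fou_B\circ\rho(\lambda^{-1}) = |\lambda|^{-\dim_\R V}\,\rho(\lambda)\circ\Fou_B$; dualizing (taking transposes, which reverses the order of composition and inverts $\lambda$ in the scaling operators but leaves the scalar $|\lambda|^{-\dim_\R V}$ unchanged since $|\lambda^{-1}|^{-\dim_\R V}$ reappears after re-indexing) yields the claimed identity $\rho(\lambda)\circ\Fou_B = |\lambda|^{-\dim_\R V}\,\Fou_B\circ\rho(\lambda^{-1})$ on $\Sc^*(V)$.

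There is no real obstacle here — the statement is explicitly flagged as "well known" — so the only thing to be careful about is normalization bookkeeping: the precise convention for $\rho$ (acting by $f(v)\mapsto f(tv)$ versus $f(v)\mapsto f(t^{-1}v)$), the normalized absolute value over $\C$ (where $|\lambda|_\C = |\lambda|_\R^2$, consistent with $\dim_\R\C = 2$), and the fact that the self-dual measure is genuinely invariant under $v\mapsto -v$ so that no extra sign enters. I would present the function-level computation as the single displayed calculation above and then note that passing to distributions by duality is formal, giving the result.
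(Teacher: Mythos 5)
The paper states this proposition without proof, flagging it as a "well known fact," so there is no proof of record to compare against. Your argument is the standard one and is correct: the change of variables $v\mapsto\lambda^{-1}v$ in $\int_V f(\lambda v)\,\psi(B(v,w))\,dv$ produces the Jacobian factor $|\lambda|_\R^{-\dim_\R V}$ together with $\psi(B(v,\lambda^{-1}w))$, which by bilinearity is $(\Fou_B f)(\lambda^{-1}w)=(\rho(\lambda)\Fou_B f)(w)$; transposing the function-level identity and using $\rho_{\mathrm{fn}}(\mu)^{*}=\rho_{\mathrm{dist}}(\mu^{-1})$ and $\Fou_B^{*}=\Fou_B$ on $\Sc^{*}(V)$ yields precisely the claimed identity on distributions. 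Your observation that $|\lambda|_F^{\dim_F V}=|\lambda|_\R^{\dim_\R V}$ for both $F=\R$ and $F=\C$ correctly reconciles the normalization with the exponent $\dim_\R V$ appearing in the statement.
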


\begin{notation}
Let $(\rho, \mathcal{E})$ be a complex representation of
$F^{\times}$. 
We denote by $JH(\rho, \mathcal{E})$ the subset of characters of
$F^{\times}$ which are subquotients of $(\rho, \mathcal{E})$.
\end{notation}

We will use the following straightforward lemma.
\begin{lemma} \label{sf}
Let $(\rho, \mathcal{E})$ be a complex representation of
$F^{\times}$. 
Let $\chi$ be a character of $F^{\times}$. Suppose that there
exists an invertible linear operator $A:\mathcal{E} \to
\mathcal{E}$ such that for any $\lambda \in F^{\times}$,
$\rho(\lambda) \circ A = \chi(\lambda) A \circ
\rho(\lambda^{-1})$. Then $JH(\mathcal{E})= \frac{\chi
}{JH(\mathcal{E})}$.
\end{lemma}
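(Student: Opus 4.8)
The plan is to reinterpret the hypothesis on $A$ as an isomorphism of $F^{\times}$-representations and then to track the effect on Jordan--Hölder constituents. First I would introduce the twisted representation $\rho'$ of $F^{\times}$ on the same underlying space $\mathcal{E}$, defined by $\rho'(\lambda):=\chi(\lambda)\rho(\lambda^{-1})$. Since $F^{\times}$ is abelian, $\rho'$ is again a representation: for $\lambda,\mu\in F^{\times}$ one has
$$\rho'(\lambda)\rho'(\mu)=\chi(\lambda)\chi(\mu)\rho(\lambda^{-1})\rho(\mu^{-1})=\chi(\lambda\mu)\rho((\lambda\mu)^{-1})=\rho'(\lambda\mu).$$
The identity $\rho(\lambda)\circ A=\chi(\lambda)A\circ\rho(\lambda^{-1})$ can be rewritten as $\rho(\lambda)\circ A=A\circ\rho'(\lambda)$, which says precisely that $A\colon(\rho',\mathcal{E})\to(\rho,\mathcal{E})$ is a morphism of $F^{\times}$-representations; as $A$ is invertible, it is an isomorphism, whence $JH(\rho',\mathcal{E})=JH(\rho,\mathcal{E})$.

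Next I would compute $JH(\rho',\mathcal{E})$ directly in terms of $JH(\rho,\mathcal{E})$. Because $\rho'(\lambda)$ is a nonzero scalar multiple of $\rho(\lambda^{-1})$, and $\{\rho(\lambda^{-1}):\lambda\in F^{\times}\}$ and $\{\rho(\lambda):\lambda\in F^{\times}\}$ span the same subalgebra of $\operatorname{End}(\mathcal{E})$, the representations $\rho$ and $\rho'$ have exactly the same invariant subspaces, hence the same lattice of subrepresentations and the same subquotients as abstract vector spaces. If $F^{\times}$ acts on such a one-dimensional subquotient by a character $\eta$ via $\rho$, then it acts on it by $\lambda\mapsto\chi(\lambda)\eta(\lambda^{-1})=(\chi\eta^{-1})(\lambda)$ via $\rho'$. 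Therefore $JH(\rho',\mathcal{E})=\{\chi\eta^{-1}:\eta\in JH(\rho,\mathcal{E})\}$, which is exactly $\frac{\chi}{JH(\mathcal{E})}$ in the notation of the lemma. Combining the two identifications gives
$$JH(\mathcal{E})=JH(\rho,\mathcal{E})=JH(\rho',\mathcal{E})=\frac{\chi}{JH(\mathcal{E})},$$
as desired.

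The argument is essentially formal, so I do not expect a serious obstacle. The only point deserving a little care is the bookkeeping when $\mathcal{E}$ is infinite dimensional: one must make sure that the notion of subquotient character is applied consistently and that the two comparisons above --- the isomorphism $A\colon(\rho',\mathcal{E})\to(\rho,\mathcal{E})$ on the one hand, and the equality of invariant-subspace lattices for $\rho$ and $\rho'$ on the other --- are compatible when passing to one-dimensional subquotients. Both are handled by the observation that $\rho$ and $\rho'$ literally have the same invariant subspaces.
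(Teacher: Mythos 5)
Your proof is correct. The paper labels this lemma as ``straightforward'' and supplies no proof, so there is no argument in the paper to compare against; your derivation --- introducing the twisted representation $\rho'(\lambda)=\chi(\lambda)\rho(\lambda^{-1})$, checking it is a representation using commutativity of $F^{\times}$, observing that the hypothesis says precisely that $A$ intertwines $\rho'$ with $\rho$ (so $JH(\rho')=JH(\rho)$), and then noting that $\rho$ and $\rho'$ have the same invariant subspaces with the character on any one-dimensional subquotient transforming by $\eta\mapsto\chi\eta^{-1}$ (so $JH(\rho')=\chi/JH(\rho)$) --- is the natural way to fill in the details, and all the steps check out, including in the infinite-dimensional countable case relevant to the application.
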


We will also use the following standard lemma.
\begin{lemma}
Let $(\rho, \mathcal{E})$ be a complex representation of
$F^{\times}$
of countable dimension.\\
(i) If  $JH(\mathcal{E}) = \emptyset$ then $\mathcal{E}=0$. \\
(ii) Let $I$ be a well ordered set and $F^i$ be a filtration on
$\mathcal{E}$ indexed by $i \in I$ by subrepresentations. Then
$JH(\mathcal{E})= \bigcup_{i \in I} JH(\mathrm{Gr}^i(\mathcal{E}))$.
\end{lemma}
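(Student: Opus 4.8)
The plan is to regard $(\rho,\mathcal{E})$ as a module over the commutative group algebra $R:=\C[F^{\times}]$, so that an element of $JH(\mathcal{E})$ is the same as a one-dimensional subquotient of this $R$-module on which $F^{\times}$ acts through a character. Part (i) will then follow from commutative algebra together with the fact that $\C$ is algebraically closed, while part (ii) is a formal manipulation inside the lattice of subrepresentations that uses only the well-ordering of $I$.

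For part (i), suppose $\mathcal{E}\neq 0$ and pick $0\neq v\in\mathcal{E}$. The cyclic submodule $Rv$ is isomorphic to $R/I$, where $I:=\mathrm{Ann}(v)\subsetneq R$, and it has at most countable $\C$-dimension since $\mathcal{E}$ does. Choose a maximal ideal $\mathfrak{m}\supseteq I$ of $R$; the residue field $R/\mathfrak{m}$ is then a field which is a quotient ring of $R/I\cong Rv$, hence of at most countable $\C$-dimension. I claim any such field equals $\C$: an element $x$ transcendental over $\C$ would generate a subfield $\C(x)$ of uncountable $\C$-dimension, as the family $\{(x-a)^{-1}\}_{a\in\C}$ is $\C$-linearly independent by partial fractions, so the extension $R/\mathfrak{m}\supseteq\C$ must be algebraic and therefore trivial. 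Transporting $\mathfrak{m}/I\subsetneq R/I$ through the isomorphism $R/I\cong Rv$ thus exhibits a subrepresentation $N\subseteq Rv\subseteq\mathcal{E}$ with $Rv/N\cong R/\mathfrak{m}\cong\C$, i.e. a character of $F^{\times}$, which therefore lies in $JH(\mathcal{E})$; so $JH(\mathcal{E})\neq\emptyset$.

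For part (ii), the inclusion $\bigcup_{i}JH(\mathrm{Gr}^{i}(\mathcal{E}))\subseteq JH(\mathcal{E})$ is immediate, since each $\mathrm{Gr}^{i}(\mathcal{E})$ is a subquotient of $\mathcal{E}$ and a subquotient of a subquotient is a subquotient. For the reverse inclusion, let $\psi\in JH(\mathcal{E})$ be realized by subrepresentations $\mathcal{E}_{1}\subset\mathcal{E}_{2}\subseteq\mathcal{E}$ with $\mathcal{E}_{2}/\mathcal{E}_{1}\cong\psi$. For $v\in\mathcal{E}_{2}\setminus\mathcal{E}_{1}$ let $i(v)$ be the least index with $v\in F^{i(v)}$ — it exists because the filtration is exhaustive and $I$ is well-ordered — and let $i_{0}:=\min\{i(v):v\in\mathcal{E}_{2}\setminus\mathcal{E}_{1}\}$, attained at some $v_{0}$; put $K:=\bigcup_{j<i_{0}}F^{j}$. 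The natural $R$-equivariant map $\mathcal{E}_{2}\cap F^{i_{0}}\to\mathcal{E}_{2}/\mathcal{E}_{1}\cong\psi$ is surjective, since its image contains the nonzero class of $v_{0}$, and its kernel $\mathcal{E}_{1}\cap F^{i_{0}}$ is contained in $(\mathcal{E}_{1}\cap F^{i_{0}})+K$. Moreover $v_{0}\notin(\mathcal{E}_{1}\cap F^{i_{0}})+K$: otherwise $v_{0}=a+b$ with $a\in\mathcal{E}_{1}$ and $b\in K$, so $v_{0}-a\in\mathcal{E}_{2}\setminus\mathcal{E}_{1}$ lies in $K$, hence in some $F^{j}$ with $j<i_{0}$, contradicting the minimality of $i_{0}$. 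Hence the composite map $\mathcal{E}_{2}\cap F^{i_{0}}\to\mathrm{Gr}^{i_{0}}(\mathcal{E})=F^{i_{0}}/K$ followed by passage to the quotient by the image of $\mathcal{E}_{1}\cap F^{i_{0}}$ factors through $\psi$ and is nonzero on $v_{0}$, so it is an isomorphism onto that quotient; this identifies $\psi$ with a subquotient of $\mathrm{Gr}^{i_{0}}(\mathcal{E})$, giving $\psi\in JH(\mathrm{Gr}^{i_{0}}(\mathcal{E}))$.

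The one genuinely non-formal ingredient, and the step I expect to require the most care, is the field-theoretic claim in part (i) that a field which is a $\C$-algebra of at most countable dimension must be $\C$ itself; this is precisely where the countable-dimension hypothesis is used, and it is why one cannot simply invoke an abstract Jordan--Hölder statement over an arbitrary base ring. Everything else is routine bookkeeping, using throughout that $R$-submodules of $\mathcal{E}$ are exactly the $F^{\times}$-subrepresentations of $\mathcal{E}$.
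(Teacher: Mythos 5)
The paper labels this lemma as standard and gives no proof, so there is nothing to compare against; your argument is correct and is the expected one. In part (i) you pass to a cyclic $\C[F^{\times}]$-submodule $Rv\cong R/I$, take a maximal ideal $\mathfrak{m}\supseteq I$, and invoke the countable-dimension form of the Nullstellensatz --- a field that is a $\C$-algebra of at most countable $\C$-dimension must equal $\C$, since otherwise a transcendental $x$ would give the uncountable $\C$-independent family $\{(x-a)^{-1}\}_{a\in\C}$ --- to conclude $R/\mathfrak{m}\cong\C$, i.e.\ a character quotient of $Rv$, hence a nonempty $JH(\mathcal{E})$. In part (ii) the inclusion $\supseteq$ is formal, and for $\subseteq$ you correctly use the well-ordering of $I$ twice (to define $i(v)$ and to take $i_0=\min i(v)$), then check that $v_0\notin(\mathcal{E}_1\cap F^{i_0})+K$ so the induced map $\psi\cong(\mathcal{E}_2\cap F^{i_0})/(\mathcal{E}_1\cap F^{i_0})\to \mathrm{Gr}^{i_0}(\mathcal{E})$ modulo the image of $\mathcal{E}_1\cap F^{i_0}$ is a nonzero, hence injective, map of the simple object $\psi$ onto a subquotient of $\mathrm{Gr}^{i_0}(\mathcal{E})$. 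Your remark that the countable-dimension hypothesis is the only non-formal ingredient, and is used solely in part (i), is accurate.
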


Now we will prove proposition \ref{SimpleOrbitCheck}. First we
remind its formulation.

\begin{proposition}
Let $G$ be a Nash group. Let $V$ be a finite dimensional
representation of $G$ over $F$. Suppose that the action of $G$
preserves some non-degenerate bilinear form $B$ on $V$. Let $V =
\bigcup \limits _{i=1}^n S_i$ be a stratification of $V$ by
$G$-invariant Nash cones.

Let $\mathfrak{X}$ be a set of characters of $F^{\times}$ such
that the set $\mathfrak{X} \cdot \mathfrak{X}$ does not contain
the character $|\cdot|^{\dim_{\R}V}$. Let $\chi$ be a character of
$G$. Suppose that for any $i$ and $k$, the space
$\Sc^*(S_i,Sym^k(CN_{S_i}^V))^{G,\chi}$ consists of homogeneous
distributions of type $\alpha$ for some $\alpha \in \mathfrak{X}$.
Then $\Sc^*(V)^{G,\chi}=0$.
\end{proposition}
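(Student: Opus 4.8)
The plan is to mimic the structure of the non-archimedean argument (Proposition \ref{padic-OrbitCheck}), replacing the cruder stratification bookkeeping by the filtration machinery of Corollary \ref{Filt2Cor} and tracking the $F^\times$-homogeneity type at each graded piece. First I would set up the $F^\times$-action: the homothety action $\rho$ of $F^\times$ on $V$ commutes with the $G$-action (since the $S_i$ are $G$-invariant cones), so $F^\times$ acts on the (countable-dimensional) space $\Sc^*(V)^{G,\chi}$, and it suffices by part (i) of the last lemma to show $JH(\Sc^*(V)^{G,\chi}) = \emptyset$. The key input is that Fourier transform $\Fou_B$ preserves $\Sc^*(V)^{G,\chi}$: since $G$ preserves $B$ and $\chi$ is a character of $G$, $\Fou_B$ commutes with the $G$-action up to the scalar built into the definition, hence maps $\Sc^*(V)^{G,\chi}$ to itself, and by the displayed homothety-Fourier identity it satisfies the hypothesis of Lemma \ref{sf} with the character $|\cdot|^{\dim_\R V}$. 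Therefore
$$JH(\Sc^*(V)^{G,\chi}) = \frac{|\cdot|^{\dim_\R V}}{JH(\Sc^*(V)^{G,\chi})}.$$

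Next I would order the strata so that each $S_{\leq j} := \bigcup_{i \leq j} S_i$ is open in $S_{\leq j+1}$ (possible since the $S_i$ form a stratification), giving via Theorem \ref{ExSeq} a finite filtration of $\Sc^*(V)$ by $G$- and $F^\times$-stable subspaces whose successive quotients are the $\Sc^*_V(S_i)$; then Corollary \ref{Filt2Cor} refines each $\Sc^*_V(S_i)$ by a further filtration with graded pieces $\Sc^*(S_i, Sym^k(CN^V_{S_i}))$. Taking $(G,\chi)$-isotypic parts (exactness of $(-)^{G,\chi}$ on this countable-dimensional setting, or rather left-exactness together with the $JH$-additivity lemma) and using part (ii) of the last lemma, I get
$$JH\bigl(\Sc^*(V)^{G,\chi}\bigr) \subset \bigcup_{i,k} JH\bigl(\Sc^*(S_i, Sym^k(CN^V_{S_i}))^{G,\chi}\bigr).$$
By hypothesis each $\Sc^*(S_i, Sym^k(CN^V_{S_i}))^{G,\chi}$ consists of homogeneous distributions of some type $\alpha \in \mathfrak{X}$ — so every character appearing in $JH(\Sc^*(V)^{G,\chi})$ lies in $\mathfrak{X}$. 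Combining with the Fourier-symmetry relation above, any $\alpha \in JH(\Sc^*(V)^{G,\chi})$ forces $|\cdot|^{\dim_\R V}/\alpha$ also in $JH(\Sc^*(V)^{G,\chi}) \subset \mathfrak{X}$, whence $|\cdot|^{\dim_\R V} = \alpha \cdot (|\cdot|^{\dim_\R V}/\alpha) \in \mathfrak{X}\cdot\mathfrak{X}$, contradicting the assumption on $\mathfrak{X}$. Hence $JH(\Sc^*(V)^{G,\chi}) = \emptyset$ and $\Sc^*(V)^{G,\chi} = 0$.

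The main obstacle I anticipate is the bookkeeping needed to pass the $F^\times$-action cleanly through the two nested filtrations and to justify that taking $(G,\chi)$-invariants interacts well with $JH$ — i.e. that a character subquotient of $\Sc^*(V)^{G,\chi}$ as an $F^\times$-representation is indeed a subquotient of one of the graded pieces' invariants. This requires checking that the filtration subspaces are $F^\times$-stable (the restriction and "grading along the normal bundle" maps in Theorems \ref{ExSeq} and \ref{Filt2} are $F^\times$-equivariant because the $S_i$ are cones and the conormal bundles carry the induced homothety action), and that $Sym^k(CN^V_{S_i})$ contributes the predicted homogeneity twist, which is already encoded in the hypothesis. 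A secondary point is ensuring all spaces in sight have countable dimension so part (i) of the last lemma applies; this follows from $\Sc^*$ of a Nash manifold (with Nash coefficients) being a countable-dimensional object in the relevant sense used in \cite{AG}. Everything else is a direct transcription of the $p$-adic proof.
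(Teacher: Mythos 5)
Your proposal is correct and takes essentially the same approach as the paper: filter $\Sc^*(V)^{G,\chi}$ as an $F^\times$-representation via Corollary \ref{Filt2Cor}, bound $JH$ using the homogeneity hypothesis on the graded pieces, and combine with the Fourier-transform symmetry from Lemma \ref{sf} to contradict $|\cdot|^{\dim_{\R}V}\notin\mathfrak{X}\cdot\mathfrak{X}$. One small slip: with the paper's convention that homogeneous of type $\alpha$ means $\rho(t)\xi=\alpha^{-1}(t)\xi$, the filtration actually gives $JH(\Sc^*(V)^{G,\chi})\subset\mathfrak{X}^{-1}$ rather than $\mathfrak{X}$, and the Fourier relation reads $JH=|\cdot|^{-\dim_{\R}V}/JH$ rather than $|\cdot|^{+\dim_{\R}V}/JH$; these two compensating sign errors cancel, so your final contradiction is unaffected.
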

\begin{proof}
Consider $\Sc^*(V)^{G,\chi}$ as a representation of $F^{\times}$.
It has a canonical filtration given by corollary \ref{Filt2Cor}.
It is easy to see that $\mathrm{Gr}^{ik}(\Sc^*(V)^{G,\chi})$ is
canonically imbedded into $(\mathrm{Gr}^{ik}(\Sc^*(V))^{G,\chi}$.
Therefore by the previous lemma $JH(\Sc^*(V)^{G,\chi}) \subset
\mathfrak{X}^{-1}$. On the other hand $G$ preserves $B$ and hence
we have $\Fou_B:\Sc^*(V)^{G,\chi} \to \Sc^*(V)^{G,\chi}$.
Therefore by lemma \ref{sf} we have
$$JH(\Sc^*(V)^{G,\chi}) \subset |\cdot|^{-{\dim_{\R} V}} \mathfrak{X}.$$
Hence  $JH(\Sc^*(V)^{G,\chi}) = \emptyset$. Thus
$\Sc^*(V)^{G,\chi}=0$.
\end{proof}

\subsection{Proof of proposition \ref{Trick}} \label{ProofTrick}

$ $

The following proposition clearly implies  proposition
\ref{Trick}.
\begin{proposition} 
Let $X$ be a smooth manifold. Let $V$ be a real finite dimensional
vector space. Let $U\subset V$ be an open non-empty subset. Let
$E$ be a vector bundle over $X$. Then for any $k \geq 0$ there
exists a canonical embedding $\cD(X,E \otimes Sym^k(V))
\hookrightarrow \cD(X \times U, E \boxtimes D_V)$.
\end{proposition}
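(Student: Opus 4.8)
The plan is to build a continuous linear surjection in the opposite direction on test sections and then transpose it. Fix $k\ge 0$ and define
$$\Phi:\ \cC(X\times U, E\boxtimes D_V)\ \longrightarrow\ \cC(X, E\otimes Sym^k(V)),\qquad \Phi(\phi)(x):=\int_V \phi(x,v)\otimes v^k,$$
where $v\mapsto v^k$ is the polynomial map $V\to Sym^k(V)$ and the $D_{V,v}$-component of $\phi(x,v)\in E_x\otimes D_{V,v}$ is exactly what is integrated (so no choice of Lebesgue measure is needed, and the construction is manifestly natural in the pair $(X,E)$). First I would check that $\Phi$ is well defined and takes values in compactly supported smooth sections: for $\phi$ supported in a compact $K\subset X\times U$ the integrand is, for each $x$, a compactly supported $(E_x\otimes Sym^k(V))$-valued density on $V$ supported in $U$, so the integral converges; smoothness in $x$ follows by differentiating under the integral sign using the uniform bounds coming from $K$; and $\Supp_X(\Phi(\phi))\subseteq \mathrm{pr}_X(\Supp\phi)$ is compact. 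These same estimates show that $\Phi$ restricted to sections supported in a fixed compact $K$ is continuous between the relevant \Fre spaces, hence $\Phi$ is continuous for the inductive-limit topologies.

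The heart of the argument is surjectivity of $\Phi$. The key elementary fact is that $\{v^k:v\in U\}$ spans $Sym^k(V)$ for any nonempty open $U\subseteq V$: a functional $\lambda\in Sym^k(V)^*$ annihilating all such $v^k$ gives a polynomial function $v\mapsto\lambda(v^k)$ on $V$ that vanishes on $U$, hence identically, hence $\lambda=0$. Consequently the linear subspace $L:=\{\int_V\rho(v)\, v^k:\ \rho\in\cC(U,D_V)\}\subseteq Sym^k(V)$, being a subspace of a finite-dimensional space, is closed, and for every $v_0\in U$ and every approximate identity $\rho_\varepsilon$ at $v_0$ supported in $U$ it contains $\lim_\varepsilon\int_V\rho_\varepsilon(v)v^k=v_0^k$; therefore $L=Sym^k(V)$. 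Now choose $\rho_1,\dots,\rho_N\in\cC(U,D_V)$ such that $u_j:=\int_V\rho_j(v)\, v^k$ form a basis of $Sym^k(V)$. Given $\psi\in\cC(X, E\otimes Sym^k(V))$, write $\psi(x)=\sum_j\psi_j(x)\otimes u_j$ with $\psi_j=(\mathrm{id}_E\otimes\lambda_j)\circ\psi\in\cC(X,E)$, where $\lambda_j$ is the dual basis; then $\phi(x,v):=\sum_j\psi_j(x)\otimes\rho_j(v)$ lies in $\cC(X\times U, E\boxtimes D_V)$ (its support is contained in the compact set $\bigcup_j\Supp\psi_j\times\Supp\rho_j\subseteq X\times U$), and $\Phi(\phi)=\psi$. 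Hence $\Phi$ is onto.

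Finally, since $\Phi$ is a continuous linear surjection, its transpose is injective, giving a canonical embedding
$$\Phi^*:\ \cD(X, E\otimes Sym^k(V))\ \hookrightarrow\ \cD(X\times U, E\boxtimes D_V)$$
(indeed, if a continuous functional $\mu$ on $\cC(X\times U,E\boxtimes D_V)$ satisfies $\mu\circ\Phi=0$, then $\mu$ vanishes on $\Phi(\cC(X\times U,E\boxtimes D_V))=\cC(X, E\otimes Sym^k(V))$, so $\mu=0$). By naturality of $\Phi$ in $(X,E)$, this embedding is equivariant for any group action, which is exactly what is needed to deduce Proposition~\ref{Trick}. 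The only mildly delicate points are the verification that $\Phi$ lands in \emph{compactly supported} smooth sections and the passage of the surjectivity statement from a single fibre $V$ to the family over $X$; both are dispatched by the explicit formula above, so I do not anticipate a genuine obstacle.
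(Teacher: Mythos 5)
Your proof is correct and is essentially the same construction the paper gives: both arguments build a continuous linear surjection $\cC(X\times U, E\boxtimes D_V)\twoheadrightarrow \cC(X, E\otimes Sym^k(V))$ by taking $k$-th moments of the density over $V$ and then transpose it. (The paper phrases the same map by viewing $Sym^k(V)$ as the dual of degree-$k$ homogeneous polynomials, i.e.\ $\pi(w)(x)(p)=\int_V p(y)\,w(x,y)$, which is the pairing of your $\int_V\phi(x,v)\otimes v^k$ with $p$.) The only differences are cosmetic: you treat general $E$ directly where the paper reduces to trivial $E$ by a partition of unity, and you spell out the surjectivity argument (the span of $\{v^k:v\in U\}$ and the choice of $\rho_j$) which the paper dismisses with ``it is easy to check.''
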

\begin{proof}
It is enough to construct a continuous linear epimorphism $$\pi:
\cC(X \times U, E \boxtimes D_V) \twoheadrightarrow \cC(X,E
\otimes Sym^k(V)).$$

By partition of unity it is enough to do it for trivial $E$. Let
$w \in \cC(X \times U, D_V)$ and $x \in X$ we have to define
$\pi(w)(x) \in Sym^k(V).$ Consider the space $Sym^k(V)$ as the
space of linear functionals on the space of homogeneous
polynomials on $V$ of degree $k$. Define $$\pi(w)(x)(p):=\int_{y
\in V} p(y) w(x,y).$$ It is easy to check that $\pi(w) \in \cC(X,
Sym^k(V))$ and $\pi$ is continuous linear epimorphism.
\end{proof}

\end{document}